\documentclass[pdflatex,sn-mathphys-num]{sn-jnl}
\usepackage{graphicx,subfig}%
\usepackage{multirow}%
\usepackage{amsthm,amsmath,tikz-cd}%
\usepackage{mathrsfs,cleveref,xfrac}%
\usepackage{xcolor}%
\usepackage{textcomp,physics}%
\usepackage{manyfoot}%
\usepackage{booktabs}%
\usepackage{algorithm}%
\usepackage{algorithmicx}%
\usepackage{algpseudocode}%
\usepackage{listings, array,tabularx, multirow,multicol}%
\usepackage{enumitem}
\usepackage{hyperref,float}
\usepackage{amsmath, amssymb}
\usepackage{cleveref}
\usepackage{graphicx}
\usepackage{float}
\usepackage{caption}
\usepackage{subfig}
\usepackage{epstopdf,amsmath}

\newcommand{\bu}{{\bf u} }

\newcommand{\Og}{\Omega}

\ifpdf
  \DeclareGraphicsExtensions{.eps,.pdf,.png,.jpg}
\else
  \DeclareGraphicsExtensions{.eps}
\fi
\theoremstyle{thmstyleone}%
\newtheorem{theorem}{Theorem}
\newtheorem{proposition}[theorem]{Proposition}%
\newtheorem{remark}{Remark}%
\newtheorem{lemma}{Lemma}
\raggedbottom
\begin{document}
\title[A High-Accuracy Symplectic Scheme for Advection–Diffusion–Reaction Models in Bioseparation]{A High-Accuracy Symplectic Scheme for Advection–Diffusion–Reaction Models in Bioseparation}

\author*[1]{\fnm{Farjana} \sur{Siddiqua}}\email{fas41@pitt.edu}

\author[2]{\fnm{Catalin} \sur{Trenchea}}\email{trenchea@pitt.edu}

\affil*[1,2]{\orgdiv{Mathematics}, \orgname{University of Pittsburgh}, \orgaddress{\street{301 Thackeray Hall}, \city{Pittsburgh}, \postcode{15260}, \state{Pennsylvania}, \country{USA}}}


\abstract{We analyze an 
advection-diffusion-reaction problem with non-homogeneous boundary conditions that models the chromatography process, a vital stage in bioseparation. We prove stability and error estimates for both constant and affine adsorption, using the
symplectic one-step implicit midpoint method for time discretization and finite elements for spatial discretization. 
In addition, we perform the stability analysis for the nonlinear, explicit adsorption in the continuous and semi-discrete cases. For the nonlinear, explicit adsorption, we also complete the
error analysis for the semi-discrete case and prove the existence of a solution for the fully discrete case. The numerical tests validate our theoretical results.}

\keywords{Advection, Diffusion, Reaction, Chromatography, Adsorption, Bioseparation}



\maketitle
\section{Introduction}
The global market for biopharmaceuticals is expanding fast, and $50\%$ of top $100$ drugs will most likely be derived from biotechnology \cite{website1, coker2012biotherapeutics}. The high demand for biopharmaceuticals is due to their effectiveness in treating various illnesses such as diabetes, anemia, cancer, etc. \cite{leader2008protein}. 
For example, monoclonal antibodies \cite{quinteros2017therapeutic}, general products from bioseparation, are very useful medications 
in treating 
COVID-19 \cite{website2,website3,website4}. 
Other key factors driving the growth of the market are 
the 
rising investments in 
research and development of novel treatments, favorable government regulations, and 
the
increasing adoption of biopharmaceuticals by the global population \cite{website1}. 
To maximize the production capacity while minimizing costs, manufacturers are constantly developing new methods. 
Integrating new technologies into 
existing facilities 
is
more economically viable
than
the alternative of constructing new biomanufacturing facilities, due to financial risks. 
Upstream and downstream processes are typically part of a biomanufacturing facility. In the upstream process, cells cultured by genetically engineered methods release the desired product into a solution, and in the downstream process, the product is purified from the solution \cite{clarke2013bioprocess}. The capacity of production is often limited by downstream purification, usually including chromatography. In the protein chromatography process, when the solution is pushed through the column, the materials in columns separate the proteins \cite{thesis}. 
The 
ideal media for the chromatography columns used for bioseparation are resin beds, monoliths, and membranes \cite{wang2016development}.
Membrane chromatography \cite{m1,m2,m3} addresses the low efficiency of resin chromatography and uses a porous, absorptive membrane as the packing medium instead of the small resin beads. 
The protein binding capacity is crucial in membrane chromatography as it determines the volume of membrane required for purification. Most adsorption mechanisms, such as ion-exchange membranes, lose the protein binding capacity at relatively low conductivity and often require additional processing stages, causing lower yield and higher production costs. \textcolor{blue}{Recent advances in downstream bioprocessing have underscored the need for 
quantitative modeling frameworks that can predict solute transport and adsorption 
behavior within chromatographic membranes and resins. Nonlinear adsorption 
isotherms, such as Langmuir and steric mass-action (SMA) models, capture the 
competitive and capacity-limited binding that defines modern bioseparation systems 
\cite{newref1,newref2,newref3}. The governing advection–diffusion–reaction equations 
considered in this paper are consistent with those derived for membrane chromatography 
columns, where mass transport resistance and nonlinear binding kinetics jointly determine 
the dynamic binding capacity and breakthrough performance \cite{newref4,newref5}. 
Mathematical analyses of these systems are essential for optimizing column design and 
predicting the performance of emerging multimodal membranes used for antibody and 
protein purification in continuous bioseparation processes \cite{newref6,newref7}.
}
The
recent research in \cite{m2} 
focuses on multimodal membrane-based chromatography. 
The development of a modeling framework capable of characterizing the chromatography process under continuous flow circumstances is critical. 
\textcolor{blue}{In this paper}, we model this process for creating a simulation tool for transport in a porous medium 
by 
adopting
the reactive transport (advection-diffusion-reaction) problem 
in \cite{thesis}.\par
Let $\Omega$ be a bounded domain in $\mathbf{R^d}$, where $d=1,\ 2$, or $3$, see \Cref{fig:domain},
with piecewise smooth boundary $\Gamma$. 
We partition the boundary into three non-overlapping 
parts
$\Gamma=\Gamma_{\text{in}}\cup \Gamma_{\text{n}}\cup \Gamma_{\text{out}}$, 
where the inflow boundary is $\Gamma_{\text{in}}=\{x\in \Gamma:\overrightarrow{n}\cdot \bu(x)<0\}$, the outflow boundary is $\Gamma_{\text{out}}=\{x\in \Gamma:\overrightarrow{n}\cdot \bu(x)>0\}$, 
and
the boundaries comprising no-flow hydraulic zone(s) are  $\Gamma_{\text{n}}={\Gamma} \backslash(\Gamma_{\text{in}}\cup \Gamma_{\text{out}})$. 
Let $\bu$ denote the fluid velocity through the membrane, 
and $\overrightarrow{n}$ denote the unit outward normal to $\Omega$. 
We assume that $\bu$ is given, computed by the Darcy law \cite{darcy}, 
and
satisfying the incompressibility condition $\div\bu=0$, 
and $\bu\cdot \overrightarrow{n}(x,t)=0, \ \ x\in {\Gamma_{\text{n}}},\ t>0$. Let $\omega$ be the total porosity of the membrane ($0\leq \omega \leq 1$), $\rho_s$ be the density of the membrane, ${D}$ be the diffusion tensor that represents the diffusivity of fluid through the membrane, $C$ and $q(C)$ be the concentration in the liquid and absorbed phases respectively. 
For a forcing function $f\in L^2(0,T;L^2(\Omega))$, given velocity $\bu$ and initial concentration $C_0\in L^2(\Omega)$, we consider the following initial boundary value problem of finding 
the concentration $C(x,t)$:
\begin{equation}\label{Eq}
\left\{
\begin{array}{ll}
\omega \partial_t C+(1-\omega)\rho_s\partial_t q(C)+\div{( \bu C)}-\div{(D\grad C)}=f ,\   x\in \Omega,\ t>0 ,
\\ 
C(x,t)=g,  \    x\in {\Gamma_{\text{in}}},\ t>0, 
\\
(D\grad C)\cdot 
\overrightarrow{n}(x,t)=0, \   x\in {\Gamma_{\text{n}}\cup \Gamma_{\text{out}}},\ t>0, 
\\
C(x,0)=C_0(x), \  x\in \Omega.
\end{array}
\right.
\end{equation}
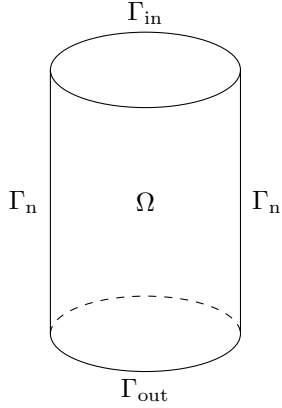
\begin{figure}[h!]
\begin{centering}
\begin{tikzpicture}
    \node at (0,-1.75) (omega) {$\Omega$};
    \node at (0,0.75) (omegaminus) {$\Gamma_{\text{in}}$};
    \node at (0,-4.25) (omegaplus) {$\Gamma_{\text{out}}$};
    \node at (-1.6,-1.75) (omeganleft) {$\Gamma_{\text{n}}$};
    \node at (1.6,-1.75) (omeganright) {$\Gamma_{\text{n}}$};
    \draw (0,0) ellipse (1.25 and 0.5);
    \draw (-1.25,0) -- (-1.25,-3.5);
    \draw (-1.25,-3.5) arc (180:360:1.25 and 0.5);
    \draw [dashed] (-1.25,-3.5) arc (180:360:1.25 and -0.5);
    \draw (1.25,-3.5) -- (1.25,0);
 \end{tikzpicture}
 \caption{Domain $\Omega$}
 \label{fig:domain}
 \end{centering}
\end{figure}
For the inflow boundary, we keep the concentration fixed \cite{bear1988dynamics,darcy}. 
\par
We consider three cases of isotherms: 
(i) constant isotherm, $q(C)=K$, 
(ii) affine isotherm, $q(C)=K_1+K_2C$ and 
(iii) nonlinear, explicit isotherm $q(C)$. 
A typical example of the nonlinear, explicit isotherm is Langmuir's isotherm \cite{m3,lang}, $ q(C)=\frac{q_{max}K_{eq}C}{1+K_{eq}C}$,
where $K_{eq}$ is Langmuir equilibrium constant 
and $q_{max}$ is the maximum binding capacity of the porous medium.
The main result of this paper is 
second-order accurate by using the symplectic one-step implicit midpoint method for the time discretization, at the same computational cost as the Backward Euler method. 
The accuracy comes in two ways, such as the rate of convergence is higher and the mass is better conserved when the midpoint method is used.
The fully discrete formulation of the 
problem 
\eqref{Eq}
is given in \Cref{sec:3}. 
We perform stability analysis and error analysis for the nonlinear 
explicit $q(C)$ in \Cref{sec:4}. 
We 
also 
prove 
the existence of a fully discrete solution, 
and complete 
the
stability analysis and error analysis for the constant and affine $q(C)$ 
in \Cref{sec:4}. 
The numerical tests validating these estimates are given in \Cref{sec:5}.
%
%
\subsection{Previous Work}

The general advection–diffusion equation has been the subject of extensive mathematical study during the past decades \cite{oldwork1,oldwork2,oldwork3,oldwork4,oldwork5,oldwork6}. \textcolor{blue}{Recent studies have extended these formulations to model reactive transport with adsorption kinetics relevant to membrane chromatography and bioseparation. In particular, the development of fully implicit stabilized formulations for filtration and separation problems has provided strong theoretical and numerical foundations for analyzing nonlinear adsorption dynamics \cite{wilson2020analysis}. 
Subsequent studies have expanded this framework to incorporate multimodal adsorption isotherms, allowing implicit coupling between the solid‐ and liquid‐phase concentrations and enabling accurate simulation of mixed‐mode adsorption under various operating conditions \cite{wilson2020numerical}.  
Experimental investigations have complemented these modeling efforts through the design and characterization of multimodal membrane adsorbers exhibiting high dynamic binding capacities and strong salt tolerance \cite{wang2015new}, followed by demonstration of effective antibody purification from CHO cell supernatants with high yield and purity using such adsorbers \cite{wang2017antibody}. 
The mathematical and physical basis of these adsorption models was originally established through detailed analysis of coupled advection–diffusion–reaction equations in functionalized membranes \cite{thesis}.}
The analysis and numerical computations are typically more difficult in the presence of reaction terms, especially nonlinear ones \cite{radu2010analysis}. 
In \cite{thesis}, the author considered constant, linear, and nonlinear adsorption models and analyzed the problem using the first-order accurate backward Euler method for time discretization and the upwind Petrov–Galerkin (SUPG) finite element method for spatial discretization, with numerical validation for each of the a priori estimates.\textcolor{blue}{ On the numerical side, the SUPG and related stabilization techniques \cite{brooks1982streamline,burman2010consistent,john2011error} remain the primary tools for advection-dominated transport, while structure-preserving symplectic schemes \cite{hairer2006geometric,sanz1992symplectic} motivate the second-order midpoint approach used in this paper. A high-order linearly implicit scheme for reaction–diffusion equations was proposed in \cite{wilson2018numerical}, offering improved temporal accuracy and stability for stiff reactive systems. This approach highlights the importance of efficient time-integration strategies, which also motivates the symplectic midpoint formulation used in the present work. 
Building upon these prior advances \cite{wilson2020analysis,wilson2020numerical,wilson2018numerical,wang2015new,wang2017antibody}, the present work extends the stabilized finite‐element framework by incorporating a symplectic implicit midpoint time‐integration scheme to improve temporal accuracy, ensure mass conservation, and enhance numerical stability for advection‐dominated reactive transport in bioseparation applications.}

\section{Notation and Preliminaries}\label{sec:2}
We denote the $L^2(\Omega)$ norm and inner product by $\|\cdot\|$ and, $(\cdot,\cdot)$ respectively. We denote the usual Sobolev spaces $W^{m,p}(\Omega)$ with the associated norms $\|\cdot\|_{W^{m,p}(\Omega)}$ and in the case when $p=2$, we denote $W^{m,2}(\Omega)=H^m(\Omega)=\{v\in L^2(\Omega):\frac{\partial^{\alpha}v}{\partial  x ^{\alpha}}\in L^2(\Omega), \ |\alpha |\leq r\}$ where $\alpha$ is a multi-index, with norm $\|v\|_r=\Bigg(\sum_{|\alpha|\leq r}\int_{\Omega}\Big|\frac{\partial^{\alpha}v}{\partial  x ^{\alpha}}\Big|^2d \Omega\Bigg)^{1/2}.$ We denote minimum eigenvalue of $D$ as $\lambda$. The function space for the liquid phase concentration is defined as:
\begin{align*}
    &H_{0,\Gamma_{\text{in}}}^1(\Omega):=\{v:v\in H^1(\Omega)\ \text{with} \ v=0 \ \text{on}\ \Gamma_{\text{in}}\}.
\end{align*}
Let $X_{0,\Gamma_{\text{in}}}\subset H_{0,\Gamma_{\text{in}}}^1(\Omega)$. We define the space $H^{1/2}(\Gamma_{\text{in}}):=\{g\in L^2(\Gamma_{\text{in}}): \|g\|_{H^{1/2}(\Gamma_{\text{in}})}<\infty \}$ where $$\|g\|_{H^{1/2}(\Gamma_{\text{in}})}=
\inf_{ G\in H^1(\Omega)\atop
G\big|_{\Gamma_{\text{in}}=g}}\|G\|_{H^1(\Omega)}.$$
The Bochner space \cite{adams2003sobolev} norms are $$\|C\|_{L^2(0,T;X)}=\Bigg(\int_0^T\|C(\cdot,t)\|_{X}^2 dt\Bigg)^{\frac{1}{2}},\ \|C\|_{L^{\infty}(0,T;X)}=\text{ess}\sup_{0\leq t \leq T}\|C(\cdot,t)\|_{X}.$$
We also define 
the
discrete $L^p$-norms with $p=2\ \text{or}\ \infty$
$$\|C\|_{L^2(0,T,X)}=\bigg(\Delta t\sum_{n=0}^N\| C^{n}\|_{X}^2\bigg),
\quad
\|C\|_{L^{\infty}(0,T,X)}= \max_{0\leq n \leq N}\|C^n\|_X.$$
For the Finite Element approximation, we consider a regular triangulation of $\Omega$, $\mathcal{T}_h=\{A\}$ with $\Omega=\bigcup_{A\in \mathcal{T}_h}A$.
We choose a finite dimensional subspace $X^h\subset H^1(\Omega)$ and define 
$$X_{0,\Gamma_{\text{in}}}^h=\{v_h\in X^h:\ v_h=0 \ \text{on} \ \Gamma_{\text{in}}\}$$ with $\Omega$ a polyhedron, $X_{0,\Gamma_{\text{in}}}^h\subset H_{0,\Gamma_{\text{in}}}^1(\Omega)$. Let $X^{*}$ be the dual space of $X_{0,\Gamma_{\text{in}}}$, with norm $\|f\|_{*}=\sup_{v\in X_{0,\Gamma_{\text{in}}}}\frac{(f,v)}{\|\grad v\|}$. We denote $X_{\Gamma_{\text{in}}}^h$ as the restriction of functions in $X^h$ to the boundary $\Gamma_{\text{in}}$ and define 
$X_0^h=\{v_h\in X^h:\ v_h=0 \ on \ \partial\Omega\}$ with $\Omega$ a polyhedron, $X_0^h\subset H_0^1(\Omega)$.
Throughout, $K$ will denote a constant taking different values in different instances. We assume that there exists a $k\geq 1$ such that $X^h$ possesses the approximation property, 
\begin{align}\label{inq1}
\inf_{C_h\in X^h}\|C-C_h\|_s\leq K h^{r-s}\|C\|_r,\ \text{for} \ \ s=0,1 \ \ \text{and} \ \ 1\leq r \leq k+1.
\end{align}
For example, \eqref{inq1} holds if $X^h$ consists of piecewise polynomials of degree $\leq k$. We assume that a similar approximation holds on $X_0^h$. In particular, if $C\in H^r(\Omega)\cap H_0^1(\Omega)$, we will use 
\begin{align}\label{inq2}
\inf_{C_h\in X_0^h}\|C-C_h\|_1\leq K h^{r-1}\|C\|_r.
\end{align}
We further assume that the space $X_{\Gamma_{\text{in}}}^h$ possesses the approximation property 
\begin{align}\label{inq3}
\inf_{C_h\in X_{\Gamma_{\text{in}}}^h}\|C-C_h\|_{0,\Gamma_{\text{in}}}\leq K h^{r-1/2}\|C\|_{r-1/2,\Gamma_{\text{in}}}.
\end{align}
\begin{lemma}\label{poin2}
For all $v\in H_{0,\Gamma_{\text{in}}}^1(\Omega)$, there exists a constant ${\Tilde{K}}_{PF}$ such that
\begin{align*}
\|v\|_1 \leq {\Tilde{K}}_{PF}\|\grad v\|.
\end{align*}
\end{lemma}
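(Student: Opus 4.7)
The plan is to reduce the claim to the standard Poincar\'e inequality and then establish it via a compactness--contradiction argument. Since $\|v\|_1^2=\|v\|^2+\|\grad v\|^2$, the stated inequality is equivalent to a bound of the form $\|v\|\le C\|\grad v\|$ for all $v\in H^1_{0,\Gamma_{\text{in}}}(\Omega)$; once this is proved, we may take $\Tilde{K}_{PF}=\sqrt{C^2+1}$.

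To establish $\|v\|\le C\|\grad v\|$, I would argue by contradiction. If no such constant exists, there is a sequence $\{v_n\}\subset H^1_{0,\Gamma_{\text{in}}}(\Omega)$ with $\|v_n\|=1$ and $\|\grad v_n\|\to 0$, so in particular $\{v_n\}$ is bounded in $H^1(\Omega)$. Since $\Omega$ is bounded with piecewise smooth (hence Lipschitz) boundary, the Rellich--Kondrachov theorem gives a subsequence (relabelled $\{v_n\}$) converging strongly in $L^2(\Omega)$ to some $v$ with $\|v\|=1$. Combined with $\|\grad v_n\|\to 0$, the sequence is actually Cauchy in $H^1(\Omega)$, so $v\in H^1(\Omega)$ with $\grad v=0$ a.e.; hence $v$ is constant on each connected component of $\Omega$.

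Next I would pass the boundary condition to the limit. The trace operator $\gamma:H^1(\Omega)\to L^2(\Gamma)$ is continuous, so $\gamma v_n\to \gamma v$ in $L^2(\Gamma_{\text{in}})$, and because $\gamma v_n=0$ on $\Gamma_{\text{in}}$ for every $n$, we obtain $\gamma v=0$ on $\Gamma_{\text{in}}$. Since $v$ is constant on each connected component and $\Gamma_{\text{in}}$ has positive $(d{-}1)$-dimensional measure (by the definition $\Gamma_{\text{in}}=\{x\in\Gamma:\overrightarrow{n}\cdot\bu(x)<0\}$, which is non-trivial in the problem setup), this constant must be $0$, giving $v\equiv 0$. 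This contradicts $\|v\|=1$, completing the proof. The constant $\Tilde{K}_{PF}$ depends only on $\Omega$ and $\Gamma_{\text{in}}$.

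The main obstacle is the last step: one must justify that $\Gamma_{\text{in}}$ meets every connected component of $\Omega$ on which $v$ is not forced to vanish. This is implicit in the geometry of Figure~\ref{fig:domain} (the domain is connected and the inflow boundary is genuinely present); if it failed, $v$ could be a nonzero constant on a component that does not touch $\Gamma_{\text{in}}$, defeating the argument. The remaining ingredients---Rellich--Kondrachov compactness and the continuity of the trace---are standard under the Lipschitz regularity of $\Gamma$.
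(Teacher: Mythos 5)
Your argument is correct: the reduction of $\|v\|_1\leq \Tilde{K}_{PF}\|\grad v\|$ to $\|v\|\leq C\|\grad v\|$, followed by the standard compactness--contradiction proof (Rellich--Kondrachov, continuity of the trace, and the positive surface measure of $\Gamma_{\text{in}}$ on the connected domain of Figure~\ref{fig:domain}) is exactly the classical proof of the Poincar\'e--Friedrichs inequality for functions vanishing on a boundary portion. The paper does not reproduce this argument; it simply invokes that inequality by citing \cite{ervin2015limiting}, so your proposal supplies in full the details the paper outsources to the reference, and your caveat about $\Gamma_{\text{in}}$ meeting each connected component is the right hypothesis to flag (it holds here since $\Omega$ is connected and $\Gamma_{\text{in}}$ is nontrivial).
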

\begin{proof}
This is the direct consequence of the Poincar\'e inequality that holds for $v\in H_{0,\Gamma_{\text{in}}}^1(\Omega)$ \cite{ervin2015limiting}.
\end{proof}

\begin{lemma}\label{dop}(See \cite[p.154]{layton2002connection})
Let $\mathcal{P}$ and $\mathcal{P}^1$ be the orthogonal projections with respect to the $L^2$ inner product $(u,v)$ and $H^1$ inner product $(\grad u,\grad v)$, respectively. Then, for any $w\in X$,
$$\grad\mathcal{P}_{X^h}w=\mathcal{P}^1_{\grad X^h}\grad w.$$
\end{lemma}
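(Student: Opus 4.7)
The plan is to reduce the asserted identity to the defining Galerkin orthogonalities of the two projections. The key structural observation is that the gradient sends $X^{h}$ onto $\grad X^{h}\subset L^{2}(\Omega)^{d}$ in a way that carries the semi–inner product $(\grad\cdot,\grad\cdot)$ on $X^{h}$ to the ordinary $L^{2}$ inner product of vector fields on $\grad X^{h}$. Under that identification, the best approximation of $w$ on one side must map to the best approximation of $\grad w$ on the other, which is exactly the claimed commutation.

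Concretely, I would first write out the defining orthogonality of $\mathcal{P}_{X^{h}}w$, in the form involving $(\grad\cdot,\grad\cdot)$,
\begin{equation*}
(\grad\mathcal{P}_{X^{h}}w-\grad w,\,\grad v)=0\quad\text{for every }v\in X^{h}.
\end{equation*}
Setting $g:=\grad v$ and observing that $g$ ranges over all of $\grad X^{h}$ as $v$ ranges over $X^{h}$, this becomes $(\grad\mathcal{P}_{X^{h}}w-\grad w,\,g)=0$ for every $g\in\grad X^{h}$. Since $\grad\mathcal{P}_{X^{h}}w$ itself lies in $\grad X^{h}$, uniqueness of the orthogonal projection onto the closed (finite-dimensional) subspace $\grad X^{h}\subset L^{2}(\Omega)^{d}$ forces $\grad\mathcal{P}_{X^{h}}w=\mathcal{P}^{1}_{\grad X^{h}}\grad w$.

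The only subtlety is the clean identification of inner products: the semi-inner product $(\grad u,\grad v)$ used to characterize $\mathcal{P}^{1}$ agrees, on vector fields lying in the range $\grad X^{h}$, with the usual $L^{2}$ inner product of vector fields, so that $\mathcal{P}^{1}_{\grad X^{h}}$ is unambiguous as an operator acting on $L^{2}(\Omega)^{d}$. Beyond that identification the proof is pure definition-chasing, and I do not anticipate any genuine obstacle. Because the statement is cited from \cite{layton2002connection}, I would present the argument compactly — essentially just the two-line unwinding above together with the uniqueness of best approximation.
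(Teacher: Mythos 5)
Your argument hinges entirely on the first display, and that is exactly where it breaks down. You assert that the defining orthogonality of $\mathcal{P}_{X^h}w$ can be written as $(\grad\mathcal{P}_{X^h}w-\grad w,\grad v)=0$ for all $v\in X^h$. But in the lemma $\mathcal{P}$ is the projection with respect to the $L^2$ inner product $(u,v)$, so its defining property is $(\mathcal{P}_{X^h}w-w,v)=0$ for all $v\in X^h$; the gradient orthogonality you wrote down is the defining property of the $H^1$-seminorm (Ritz) projection, and the $L^2$ orthogonality does not imply it. A one-dimensional check makes this concrete: on $\Omega=(0,1)$ take $X^h=\operatorname{span}\{x\}$ and $w=x^2$. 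The $L^2$ projection of $w$ is $\tfrac34 x$, so $\grad\mathcal{P}_{X^h}w=\tfrac34$, while the best $L^2$ approximation of $\grad w=2x$ from $\grad X^h=\{\text{constants}\}$ is $1$; your claimed orthogonality $(\grad\mathcal{P}_{X^h}w-\grad w,\grad v)=0$ fails, since $\int_0^1(\tfrac34-2x)\,dx=-\tfrac14\neq 0$.

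What your argument actually establishes — and establishes correctly, since the remaining steps (surjectivity of $\grad:X^h\to\grad X^h$, the identification of $(\grad\cdot,\grad\cdot)$ with the $L^2$ inner product of vector fields on $\grad X^h$, and uniqueness of best approximation in a finite-dimensional subspace) are all sound — is the classical commutation $\grad\mathcal{P}^1_{X^h}w=\mathcal{P}_{\grad X^h}\grad w$, i.e.\ the version in which the projection applied to $w$ on the left is the one taken with respect to $(\grad\cdot,\grad\cdot)$ and the projection of $\grad w$ on the right is the $L^2(\Omega)^d$ best approximation. That is the statement for which the two Galerkin orthogonalities literally coincide. To prove the identity with the $L^2$ projection acting on $w$, as the lemma is phrased and as it is later used (e.g.\ to replace $\grad\mathcal{P}(q(C_h))$ by $\mathcal{P}^1(q'(C_h)\grad C_h)$), one must either take the identity as the definition of $\mathcal{P}^1_{\grad X^h}$ — in which case there is nothing to prove — or supply an argument that genuinely uses the $L^2$ orthogonality, which your proposal does not do. The paper itself offers no proof, deferring to the cited reference, so there is no argument to compare against; but the gap at your first step must be addressed before the rest of the reasoning can be accepted.
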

\begin{lemma}\label{interpolant}
Given $g\in H^{r-1/2}(\Gamma_{\text{in}})$ for $r\geq 1$, let $\Pi_h g$ denote the $X_{\Gamma_{\text{in}}}^h$-interpolant of $g$. Then, if $X^h$ satisfies the approximation properties \eqref{inq1}-\eqref{inq3},
\begin{align}\label{inq4}
\inf_{{\hat{C}}_h\in X^h\atop {\hat{C}}_h|_{\Gamma_{\text{in}}} =\Pi_hg} \|C-{\hat{C}}_h\|_1\leq K h^{r-1}\|C\|_{r}.
\end{align}
\end{lemma}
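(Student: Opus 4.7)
The plan is to obtain $\hat{C}_h$ by starting from the best $X^h$ approximation of $C$ in the $H^1$ sense and then perturbing it by a discrete boundary correction so that its trace on $\Gamma_{\text{in}}$ is exactly $\Pi_h g$. Write this as a four-step construction. First, use \eqref{inq1} with $s=1$ to pick $C_h^\star\in X^h$ with
\begin{equation*}
\|C-C_h^\star\|_1\le K h^{r-1}\|C\|_r.
\end{equation*}
Its boundary trace $(C_h^\star)|_{\Gamma_{\text{in}}}$ lies in $X^h_{\Gamma_{\text{in}}}$ but is not necessarily equal to $\Pi_h g$; define the discrete defect $\delta_h:=(C_h^\star)|_{\Gamma_{\text{in}}}-\Pi_h g\in X^h_{\Gamma_{\text{in}}}$.

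Second, estimate $\|\delta_h\|_{0,\Gamma_{\text{in}}}$ by the triangle inequality
\begin{equation*}
\|\delta_h\|_{0,\Gamma_{\text{in}}}\le \|(C-C_h^\star)|_{\Gamma_{\text{in}}}\|_{0,\Gamma_{\text{in}}}+\|g-\Pi_h g\|_{0,\Gamma_{\text{in}}}.
\end{equation*}
The second term is bounded by \eqref{inq3} and the trace theorem by $K h^{r-1/2}\|C\|_r$. For the first term, combine the continuous trace inequality $\|w\|_{0,\partial\Omega}^2\le K\|w\|\,\|w\|_1$ with \eqref{inq1} for both $s=0$ and $s=1$ applied to $w=C-C_h^\star$ (selecting $C_h^\star$ so that it is, up to constants, the best approximation in both norms) to conclude $\|(C-C_h^\star)|_{\Gamma_{\text{in}}}\|_{0,\Gamma_{\text{in}}}\le K h^{r-1/2}\|C\|_r$. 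Thus $\|\delta_h\|_{0,\Gamma_{\text{in}}}\le K h^{r-1/2}\|C\|_r$.

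Third, construct a discrete lifting $\ell_h\delta_h\in X^h$ whose boundary trace equals $\delta_h$ and whose interior degrees of freedom vanish (this is essentially the nodal extension by zero away from $\Gamma_{\text{in}}$ on a shape-regular mesh). A standard inverse estimate on the boundary layer of elements touching $\Gamma_{\text{in}}$ yields
\begin{equation*}
\|\ell_h\delta_h\|_1\le K h^{-1/2}\|\delta_h\|_{0,\Gamma_{\text{in}}}\le K h^{r-1}\|C\|_r.
\end{equation*}
Fourth, set $\hat{C}_h:=C_h^\star-\ell_h\delta_h\in X^h$, so by construction $\hat{C}_h|_{\Gamma_{\text{in}}}=\Pi_h g$, and apply the triangle inequality to get $\|C-\hat{C}_h\|_1\le K h^{r-1}\|C\|_r$, which proves \eqref{inq4} after taking the infimum.

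The main obstacle is step three: producing a bounded discrete lifting from $X^h_{\Gamma_{\text{in}}}$ into $X^h$ with the correct $h^{-1/2}$ scaling, since the abstract approximation hypotheses \eqref{inq1}--\eqref{inq3} do not include such an operator by themselves. This is where mesh shape-regularity and the inverse trace inequality enter the argument, and where the factor $h^{-1/2}$ couples with the boundary approximation order $h^{r-1/2}$ to recover the optimal $H^1$ rate $h^{r-1}$. Once that lifting is in place, the rest is just two triangle inequalities.
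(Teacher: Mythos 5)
Your construction is correct, but it is a genuinely different (and heavier) route than the one the paper takes. The paper's proof exploits the fact that the nodal $X^h$-interpolant $\Pi_h C$ of the exact solution already has the required trace: since $C=g$ on $\Gamma_{\text{in}}$ and interpolation is nodal, $(\Pi_h C)|_{\Gamma_{\text{in}}}=\Pi_h g$, so $\hat{C}_h=\Pi_h C$ is itself an admissible competitor and the standard interpolation estimate $\|C-\Pi_h C\|_1\le Kh^{r-1}\|C\|_r$ finishes the argument in one line --- the second term of the triangle inequality is identically zero. You instead start from an arbitrary near-best $H^1$ approximation, measure the discrete boundary defect $\delta_h$, and repair it with a zero nodal extension; the bookkeeping works because the $h^{-1/2}$ loss in the inverse trace estimate for the lifting exactly cancels the $h^{r-1/2}$ order of the boundary defect, which you correctly obtain from the multiplicative trace inequality $\|w\|_{0,\partial\Omega}^2\le K\|w\|\,\|w\|_1$ together with simultaneous $L^2$/$H^1$ near-optimality of $C_h^\star$. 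The price is that your argument invokes tools not contained in the bare hypotheses \eqref{inq1}--\eqref{inq3}: mesh shape-regularity, an inverse estimate on the boundary layer of elements, and the existence of a single $C_h^\star$ that is quasi-optimal in both norms (all standard, but worth stating as assumptions). Note also that both proofs silently use $C|_{\Gamma_{\text{in}}}=g$ --- the paper to identify $(\Pi_h C)|_{\Gamma_{\text{in}}}$ with $\Pi_h g$, you to split $\delta_h=(C_h^\star-C)|_{\Gamma_{\text{in}}}+(g-\Pi_h g)$ --- so that hypothesis should be made explicit either way. What your version buys is robustness: it does not depend on the interior approximant being a nodal interpolant whose trace happens to coincide with $\Pi_h g$, so it would survive with Cl\'ement or Scott--Zhang quasi-interpolants or with non-nodal boundary data; the paper's version buys brevity and minimal hypotheses.
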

\begin{proof} This proof follows the proof of \cite[Lemma 4]{fix1983finite},
given here 
for the reader's convenience.
Let $\Pi_h C$ denote $X^h$-interpolant of $C$ and $\Pi_h g$ denote $X_{\Gamma_{\text{in}}}^h$- interpolant of $g$. Then, for ${\hat{C}}_h|_{\Gamma_{\text{in}}}=\Pi_h g$, we write the triangle inequality
\begin{equation}\label{inq5}
    \|C-{\hat{C}}_h\|_1\leq \|C-\Pi_hC\|_1+\|{\hat{C}}_h-\Pi_hC\|_1. 
\end{equation}
From the interpolation theory \cite{brenner2008mathematical}
we get
\begin{equation}\label{inq6}
    \|C-\Pi_hC\|_1\leq K h^{r-1}\|C\|_r. 
\end{equation}
We may choose ${\hat{C}}_h$ so that it has the same value at all interior nodes as does $\Pi_h C$. Since ${\hat{C}}_h|_{\Gamma_{\text{in}}}=\Pi_h g$ and $(\Pi_hC)\Big|_{\Gamma_{\text{in}}}=\Pi_h g$, we 
obtain
$({\hat{C}}_h-\Pi_hC)=0$, which concludes the argument.
\end{proof}
\subsection{Assumptions and preliminary results}
We 
use 
the following 
subset of 
assumptions 
considered in
\cite{thesis}:
\begin{enumerate}[label=\text{(F\arabic*)},ref=(F\arabic*)]
\item \label{F1}
$\omega$ and $\rho_s$ are constants in time and space \cite{darcy}. 
\item 
\label{F2}
$\bu$ is nonzero and bounded in $L^{\infty}$ norm \cite{chrispell2007fractional, schneid2004priori}.
\item 
\label{F3}
$D(x)=[d_{ij}]_{i,j=1,2,\cdots,n}$ is symmetric positive definite and  $\|D\|_{\infty}\leq \beta_1,\ |\frac{\partial}{\partial x_i}d_{ij}|\leq \beta_2$, for all $i,j$ \cite{arbogast1995characteristics,darcy,chrispell2007fractional,schneid2004priori}.
\item 
\label{F4}
There exists a unique solution $C\in L^{\infty}(0,T,L^2(\Omega))\cap L^{2}(0,T,H^1(\Omega))$ \cite{schneid2004priori}.
\item 
\label{F5}
$q=q(C)\in C^1$ is an explicit, 
Lipschitz continuous function of $C$, $q(0)=0$, $q(C)>0$ for $C>0$ and $q(C)$ is strictly increasing. Moreover, we assume that $q'(C)\geq \kappa_1>0 \ \forall C\geq0$
\cite{schneid2004priori,darcy,radu2010analysis,radu2011mixed,barrett1997finite,dawson1998analysis}.
\item 
\label{F6}
The rate of increase in adsorption is Lipschitz continuous and bounded above so that $\frac{dq}{dC}=q'(C)\leq \kappa_2$ \cite{darcy}.
\item 
\label{F7}
The second derivative of the adsorption, $q''(C)$, is Lipschitz continuous and bounded. 
\end{enumerate}
\begin{remark}
In our analysis we drop the assumption “$C(x,t)$ is nondecreasing in time at every $x$ and $C(x,t)=0$ on $\Gamma_{\text{in}}$" stated in \cite{thesis}. Instead, we consider 
the non-homogeneous boundary condition at the inflow boundary.
\end{remark}
In \cite{thesis,barrett1997finite,darcy,schneid2004priori,dawson1998analysis}, another assumption on the continuous and the discrete solution was imposed, namely that ``$C$ is non-negative”. 
Using a maximum principle argument, we now prove that the continuous solution is positive and bounded above for all $(x,t)\in \Omega\times (0,T)$.

\begin{proposition}\label{prop:pr}
%
%
Assuming  no forcing term $f=0$ and the positivity of the initial condition $0<C_0(x)$, 
we have that 
\begin{align*}
0 < C(x,t) \leq \max_{x\in \Omega} \{ C(x,0) , g(x) \}
\qquad \mbox{for all }(x,t)\in \Omega\times [0,T).
\end{align*} 
\end{proposition}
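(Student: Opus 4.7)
The plan is to employ the classical Stampacchia truncation technique, adapted to the quasilinear structure of \eqref{Eq}. First, using the incompressibility $\div \bu = 0$ and the hypothesis $f = 0$, I would rewrite the PDE in the equivalent form
\[
\bigl[\omega + (1-\omega)\rho_s\, q'(C)\bigr]\partial_t C + \bu \cdot \grad C - \div(D \grad C) = 0,
\]
and note that the coefficient in front of $\partial_t C$ is bounded below by $\omega + (1-\omega)\rho_s\kappa_1 > 0$ by assumption \ref{F5}. This recasts the problem as a uniformly parabolic equation and opens the door to maximum-principle arguments via convex truncations.

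For the upper bound, I would set $M := \max_{x \in \overline{\Omega}}\{C(x,0), g(x)\}$ and test the equation against $w := (C - M)_+ \in H_{0,\Gamma_{\text{in}}}^1(\Omega)$, which vanishes both at $t=0$ (since $C_0 \leq M$) and on $\Gamma_{\text{in}}$ (since $C = g \leq M$ there). Three computations then handle the resulting terms. (i) The temporal term is repackaged via the convex entropy $\Phi(s) := \int_0^s [\omega + (1-\omega)\rho_s q'(\tau)](\tau - M)_+\,d\tau \geq 0$, so that $\int_\Omega [\omega + (1-\omega)\rho_s q'(C)]\partial_t C \cdot w\,dx = \frac{d}{dt}\int_\Omega \Phi(C)\,dx$ with $\Phi(C_0)\equiv 0$. (ii) The advection contribution, after integration by parts using $\div \bu = 0$, collapses to the non-negative boundary integral $\tfrac{1}{2}\int_{\Gamma_{\text{out}}} (\bu \cdot \overrightarrow{n})\, w^2\,dS$, the pieces on $\Gamma_{\text{n}}$ and $\Gamma_{\text{in}}$ dropping out from $\bu \cdot \overrightarrow{n} = 0$ and $w = 0$, respectively. (iii) The diffusion term becomes $\int_\Omega D\grad w \cdot \grad w\,dx \geq \lambda \|\grad w\|^2$ after using the Neumann condition on $\Gamma_{\text{n}}\cup\Gamma_{\text{out}}$ and $w|_{\Gamma_{\text{in}}} = 0$. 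Integrating in time yields $\int_\Omega \Phi(C(\cdot,t))\,dx \leq 0$, and since $\Phi \geq 0$ this forces $(C - M)_+ \equiv 0$, i.e., $C \leq M$.

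The lower bound follows by applying the same Stampacchia argument to $v := -C$ (which satisfies a PDE of identical structure, with positive leading coefficient $\omega + (1-\omega)\rho_s q'(-v)$), using the truncation $(-C)_+$; the boundary datum $-g \leq 0$ on $\Gamma_{\text{in}}$ and initial datum $-C_0 < 0$ deliver $C \geq 0$ almost everywhere. To upgrade this to the strict positivity $C > 0$, I would freeze the coefficient $a(x,t) := \omega + (1-\omega)\rho_s q'(C(x,t))$ and view the equation as linear and uniformly parabolic in $C$; the strong maximum principle then prevents $C$ from attaining a zero interior minimum, while Hopf's lemma combined with the homogeneous Neumann condition rules out zero touching points on $\Gamma_{\text{n}}\cup\Gamma_{\text{out}}$, and strict positivity on $\Gamma_{\text{in}}$ is inherited from $g$. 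The main obstacle is the first step: identifying the convex entropy $\Phi$ that simultaneously absorbs $\omega \partial_t C$ and the nonlinear kinetics $(1-\omega)\rho_s \partial_t q(C)$ under the truncation, so that the test-function argument closes cleanly; once $\Phi$ is correctly chosen, the remaining boundary bookkeeping is routine and is dictated entirely by the partition of $\Gamma$ and the signs imposed by the problem data.
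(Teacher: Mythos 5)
Your proof is correct, but for the bulk of the statement it follows a genuinely different route from the paper. The paper's argument is entirely a pointwise maximum-principle contradiction: after dividing by the positive coefficient $\omega+(1-\omega)\rho_s q'(C)$ to obtain a uniformly parabolic equation (exactly your first step), it supposes a first time $T$ at which $C$ touches zero, invokes the Protter--Weinberger maximum principle to place the extremum on the boundary, and uses the Hopf-type conclusion $(D\grad C)\cdot \overrightarrow{n}<0$ there to contradict the homogeneous flux condition on $\Gamma_{\text{n}}\cup\Gamma_{\text{out}}$. You replace the upper bound and the nonnegativity by a Stampacchia truncation/energy argument with the convex entropy $\Phi(s)=\int_0^s[\omega+(1-\omega)\rho_s q'(\tau)](\tau-M)_+\,d\tau$, which is a sound and in fact more robust choice: it works at the level of weak solutions in $L^{\infty}(0,T;L^2)\cap L^2(0,T;H^1)$ and makes the boundary bookkeeping (sign of $\bu\cdot\overrightarrow{n}$ on $\Gamma_{\text{out}}$, vanishing of the test function on $\Gamma_{\text{in}}$) completely explicit, whereas the paper's argument needs classical regularity and is written only for the lower bound. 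For the strict positivity $C>0$ you then revert to exactly the paper's machinery (strong maximum principle plus Hopf's lemma for the frozen-coefficient linear equation), so the two proofs coincide on that step, with the same implicit regularity requirements.

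Two shared caveats, neither of which is a gap specific to your argument: both proofs implicitly require $g>0$ on $\Gamma_{\text{in}}$ (you use $-g\le 0$ for the truncation at $0$ and $g>0$ to exclude zero touching points on the inflow boundary), although the proposition only lists $f=0$ and $C_0>0$ among its hypotheses; and the chain-rule identity $\partial_t\Phi(C)=\Phi'(C)\,\partial_t C$ and the Hopf lemma both presuppose more smoothness of $C$ and of $q'$ than assumptions \ref{F4}--\ref{F6} literally guarantee. Since the paper's own proof rests on the same unstated assumptions, these are presentation issues rather than defects of your approach.
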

\begin{proof}
From the incompressibility condition, we have
\begin{align*}
    \div{( \bu C)}=(\div{ \bu})C+ \bu\cdot \grad{C}= \bu \cdot \grad{C},
\end{align*}
If we rewrite the adsorption term as
\begin{align*}\label{v3}
\frac{\partial q}{\partial t}=\frac{\partial q}{\partial C}\frac{\partial C}{\partial t}=q'(C)\frac{\partial C}{\partial t},
\end{align*}
then the equation \eqref{Eq} becomes
\begin{equation}\label{maxprinciple}
   (\omega+(1-\omega)\rho_sq'(C)) \partial_t C+\bu\cdot \grad C-\div{(D\grad C)}=f,\ x\in \Omega,\ t>0. 
\end{equation}
Since $q'(C)>0$ by assumption \ref{F5}, we can divide \eqref{maxprinciple} by $(\omega+(1-\omega)\rho_sq'(C))$. 
Hence, assuming $f=0$, \eqref{maxprinciple} writes,
\begin{align*}
   & -\partial_t C+\sum_{i,j=1}^n \bigg((\omega+(1-\omega)\rho_sq'(C))^{-1}D_{ij}\bigg)\frac{\partial^2 C}{\partial x_i \partial x_j}
    \\&
    +\sum_{j=1}^n\Bigg((\omega+(1-\omega)\rho_sq'(C))^{-1}\bigg(\frac{\partial D_{ij}}{\partial x_i}-u_j\bigg)\Bigg)\frac{\partial C}{\partial x_j}=0.
\end{align*}
Suppose the claim in the proposition is false. Then there is a $y\in \Bar{\Omega}$ and $T>0$ such that $C(y,T)=0$ and $C(x,t)>0$ for $(x,t)\in\Bar{\Omega}\ \cross [0,T)$. Therefore, by the Maximum Principle \cite[pages 173-174]{protter2012maximum}, the maximum of $C(x,t)$ is on the boundary and $(D\grad C)\cdot 
\overrightarrow{n}(x,t)<0$. This contradicts the boundary condition 
in \eqref{Eq}, 
which concludes the argument. 
\end{proof}
\section{Variational Formulation}\label{sec:3}
Let $\mathcal{P}$ be the orthogonal projection on $H^1(\Omega)$ with respect to the $L^2$ inner product $(u,v)$. The standard Galerkin variational formulation for the transport problem \eqref{Eq} is: 
Find $C \in H^1(\Omega)$ such that ${C  \Big{|}}_{\Gamma_{\text{in}}}=g$ and 
\begin{align}
\label{variational1new}
    \bigg(\frac{\partial}{\partial t}(\omega C+(1-\omega)\rho_s\mathcal{P}(q(C))),v\bigg)
    +(  \bu \cdot \grad{C},v)+&(D\grad{C},\grad{v})
    =( f,v),
\end{align}
for all $v\in H_{0,\Gamma_{\text{in}}}^1(\Omega)$.
%
%
Next, we write a finite element approximation for \eqref{Eq}.
\subsection{Semi-Discrete in Space Approximation}
Let $\mathcal{P}_h$ be the orthogonal projection on $X^h(\Omega)$ with respect to the $L^2$ inner product $(u,v)$, 
and 
$g_h=\Pi_h g$ 
an interpolant of $g$. Then we obtain the following semi-discrete in-space formulation:
Find $C_h\in X^h$ such that ${C_h \Big{|}}_{\Gamma_{\text{in}}}=g_h$ and $\forall\ v_h\in X_{0,\Gamma_{\text{in}}}^h(\Omega),$
\begin{align}
\label{variational2new}
    \bigg(\frac{\partial}{\partial t}(\omega C_h+(1-\omega)\rho_s\mathcal{P}_h(q(C_h))),v_h\bigg)
    + (  \bu \cdot \grad{C_h},v_h)
    + (D\grad{C_h},\grad{v_h})
    =( f,v_h).
\end{align}

\subsection{Fully-discrete approximation}\label{subsec:32}
We partition the time interval as $t_0=0<t_1<t_2<\cdots<t_N=T$. Let $\Delta t=t_{n+1}-t_n$ be the uniform time step size, $t_n=n\Delta t$, $t_{n+1/2}=\frac{t_n+t_{n+1}}{2}$, and $f^n(x)=f(x,t_n)$. We also denote by $C^n_h( x)$ the Finite Element approximation to $C( x,t_n)$. 
\textcolor{blue}{
The midpoint method for time discretization in Finite Element Approximation: Given $C^n_h\in X^h$, find $C^{n+1}_h\in X^h$ such that ${C_h^{n+1} \Big{|}}_{\Gamma_{\text{in}}}=g_h$ satisfying
\begin{equation}\label{nempc}
\begin{aligned}
 &\bigg(\omega\frac{C^{n+1}_h-C^n_h}{\Delta t}+(1-\omega)\rho_s \frac{q(C^{n+1}_h)-q(C^n_h)}{\Delta t},v_h\bigg)+(  u \cdot \grad{C^{n+1/2}_h},v_h)
 \\&+(D\grad{C^{n+1/2}_h},\grad{v_h})
 =( f^{n+1/2},v_h), \ \forall v_h\in X_{0,\Gamma_{\text{in}}}^h(\Omega),
\end{aligned}
\end{equation}
where $C_h^{n+\frac{1}{2}}$ denotes $\frac{C_h^n+C_h^{n+1}}{2}$.
Equivalently, $\forall v_h\in X_{0,\Gamma_{\text{in}}}^h(\Omega)$,
\begin{equation}\label{nempcn}
\begin{aligned}
 &\bigg((\omega+(1-\omega)\rho_sq'(C_h^{n+1/2}))\frac{C^{n+1}_h-C^n_h}{\Delta t},v_h\bigg)+(  u \cdot \grad{C^{n+1/2}_h},v_h)+(D\grad{C^{n+1/2}_h},\grad{v_h})
 \\&=( f^{n+1/2},v_h).
\end{aligned}
\end{equation}}
To simplify computation, we use the refactorization of the midpoint method \cite{burkardt2020refactorization} for time discretization.
Given $C^n_h\in X^h$, find $C^{n+1}_h\in X^h$ such that ${C_h^{n+1} \Big{|}}_{\Gamma_{\text{in}}}=g_h$ satisfying\\
Step 1: Backward Euler method approximating \eqref{variational2new} on time interval $[t_n,t_{n+1/2}]$, $\forall\ v_h\in X_{0,\Gamma_{\text{in}}}^h(\Omega)$,
\begin{align}
&\bigg( (\omega+(1-\omega)\rho_s)\frac{q(C^{n+1/2}_h)-q(C^n_h)}{\Delta t/2},v_h\bigg)
+(  \bu \cdot \grad{C^{n+1/2}_h},v_h)+(D\grad{C^{n+1/2}_h},\grad{v_h})
\notag
\\
&
\label{mp1}
=( f^{n+1/2},v_h).
\end{align}
\\
Step 2: Forward Euler method on time interval $[t_{n+1/2},t_{n+1}]$, $\forall \ v_h\in X_{0,\Gamma_{\text{in}}}^h(\Omega)$
\begin{align}
&
\bigg( (\omega+(1-\omega)\rho_s)\frac{q(C^{n+1}_h)-q(C^{n+1/2}_h)}{\Delta t/2},v_h\bigg)
+( \bu \cdot \grad{C^{n+1/2}_h},v_h)+(D\grad{C^{n+1/2}_h},\grad{v_h})
\notag
\\
\label{mp2}
&
=( f^{n+1/2},v_h).
\end{align}
\begin{remark}
Step 2 is equivalent to a linear extrapolation $C_h^{n+1}=2C_h^{n+1/2}-C_h^n$. 
\end{remark}
\subsection{Time-integrated finite element formulation}
For the error analysis 
of the case of a nonlinear, explicit adsorption, we 
use a time-integrated version of the transport equation introduced in \cite{nochetto1988approximation} and applied in different formulations \cite{arbogast1996nonlinear,woodward2000analysis,schneid2004priori}. To develop the time-integrated finite element discretization, we rewrite \eqref{Eq} by integrating in time to obtain
\begin{equation}\label{timeinteg}
    \omega C+(1-\omega)\rho_s q(C)+\int_0^t\bu\cdot\grad C dt'-\div\int_0^tD\grad Cdt'=\int_0^tf dt'+\omega C_0+(1-\omega)q(C_0).
\end{equation}
Testing \eqref{timeinteg} by $v\in H_{0,\Gamma_{in}}^1(\Omega)$ 
we get,
\begin{equation}\label{timeintegv}
    \begin{aligned}
        &(\omega C,v)+((1-\omega)\rho_s q(C),v)+\bigg(\int_0^t\bu\cdot\grad C dt',v\bigg)-\bigg(\div\int_0^tD\grad Cdt',v\bigg)
        \\&=\bigg(\int_0^tf dt',v\bigg)+(\omega C_0,v)+((1-\omega)q(C_0),v).
    \end{aligned}
\end{equation}
Then the semi-discrete in space variational formulation is
: Find $C_h\in X^h$ such that $C_h\big|_{\Gamma_{in}}=g_h$ and
\begin{align}
\label{timeintegfemsemi}
&
(\omega C_h,v_h)+((1-\omega)\rho_s q(C_h),v_h)+\bigg(\int_0^t\bu\cdot\grad C_h dt',v_h\bigg)-\bigg(\div\int_0^tD\grad C_hdt',v_h\bigg)
\\
&
=\bigg(\int_0^tf dt',v_h\bigg)+(\omega C_0,v_h)+((1-\omega)q(C_0),v_h),\quad \forall v_h\in X_{0,\Gamma_{in}}^h(\Omega).
\notag
\end{align}
Next, the fully discrete variational formulation using the midpoint time discretization can be written as: Given $C_h^n\in X^h$, find $C_h^{n+1}\in X^h$ such that $C_h^{n+1}\big|_{\Gamma_{in}}=g_h$ and
\begin{align}
&
\bigg( \omega C_h^{N+1} 
+( (1-\omega)\rho_s q(C_h^{N+1}),v_h \bigg)
+ \bigg(\sum_{n=0}^{N}\bu\cdot\grad C_h^{n+1/2} ,v_h\bigg)-\bigg(\div\sum_{n=0}^{N}D\grad C_h^{n+1/2},v_h\bigg)
\notag
\\
\label{timeintegfemfull}
&
= \bigg(\sum_{n=0}^{N}f^{n+1/2} ,v_h\bigg)+(\omega C_0,v_h)+((1-\omega)q(C_0),v_h),\quad \forall v_h\in X_{0,\Gamma_{in}}^h(\Omega).
\end{align}
\section{Time-Dependent Analysis}\label{sec:4} 
In this section, we first construct $\hat{C}$,  a continuous extension of the Dirichlet data $g$ inside the domain $\Omega$, to deal with the non-homogeneous boundary condition. 
Then we perform the stability and error analysis for the time-dependent problem. Detailed proofs of all theorems can be found in \cite{siddiqua2024spurious}.

\subsection{Construction of $\hat{C}$}
Denote $\hat{C}$ as the solution of the following elliptic problem with nonhomogeneous mixed boundary conditions:
\begin{align}\label{construction}
-\div{(D\grad \hat{C})}+\hat{C}&=0,\  x\in \Omega,
\\
\hat{C}
&={g},\ \text{if $x\in \Gamma_{\text{in}}$},\notag
\\
({D}\grad \hat{C})\cdot \overrightarrow{n}
&=0,\ \text{if $x\in \Gamma_{\text{n}}\cup \Gamma_{\text{out}}$}.
\notag
\end{align}
\begin{lemma}\label{Eu}
For every $f\in L^2(\Omega)$ and every $g\in H^{1/2}(\Gamma_{\text{in}})$, there exists a unique solution $\hat{C}\in H^2(\Omega)$ of \eqref{construction} under the compatibility condition $D\grad{g}\cdot \overrightarrow{n}=0$ if $x\in \Gamma_{\text{in}}\cap \Gamma_n$ 
such that
\begin{align}\label{b1}
 \|\hat{C}\|^2\leq 4(K\beta_1)^2\|g\|_{L^2(\Gamma_{\text{in}})}^2,
 \qquad
 \|\grad \hat{C}\|^2\leq \frac{2(K\beta_1)^2}{\lambda}\|g\|_{L^2(\Gamma_{\text{in}})}^2.
 \end{align}
\end{lemma}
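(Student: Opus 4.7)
The plan is to reduce the problem to a homogeneous one by lifting $g$ into $\Omega$, apply Lax--Milgram to the resulting variational problem, invoke standard elliptic regularity for the $H^2$-conclusion, and finally derive the two bounds by an energy argument testing with $\hat C-G$.

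Step 1 (lift of the boundary data). Using the trace theorem together with a tubular-neighborhood/cut-off construction near $\Gamma_{\text{in}}$, I would pick an extension $G\in H^1(\Omega)$ with $G|_{\Gamma_{\text{in}}}=g$ and
\begin{equation*}
\|G\|+\|\grad G\|\leq K\|g\|_{L^2(\Gamma_{\text{in}})}.
\end{equation*}
Writing $\hat C=w+G$ reduces \eqref{construction} to finding $w\in H^1_{0,\Gamma_{\text{in}}}(\Omega)$ such that for every $v\in H^1_{0,\Gamma_{\text{in}}}(\Omega)$,
\begin{equation*}
(D\grad w,\grad v)+(w,v)=-(D\grad G,\grad v)-(G,v).
\end{equation*}

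Step 2 (Lax--Milgram). The bilinear form $a(w,v):=(D\grad w,\grad v)+(w,v)$ is bounded on $H^1_{0,\Gamma_{\text{in}}}(\Omega)\times H^1_{0,\Gamma_{\text{in}}}(\Omega)$ by assumption \ref{F3} since $\|D\|_\infty\leq\beta_1$, and it is coercive because $D$ is positive definite with minimum eigenvalue $\lambda$:
\begin{equation*}
a(w,w)\geq \lambda\|\grad w\|^2+\|w\|^2\geq \min(\lambda,1)\,\|w\|_1^2.
\end{equation*}
The right-hand side defines a bounded linear functional on $H^1_{0,\Gamma_{\text{in}}}(\Omega)$, so Lax--Milgram produces a unique $w$, and hence a unique weak solution $\hat C\in H^1(\Omega)$.

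Step 3 ($H^2$-regularity). Standard elliptic regularity for mixed Dirichlet--Neumann problems gives $\hat C\in H^2(\Omega)$. The compatibility condition $D\grad g\cdot\vec n=0$ on $\Gamma_{\text{in}}\cap\Gamma_n$ is precisely what is needed to rule out the corner singularities that would otherwise obstruct full $H^2$-regularity at the Dirichlet--Neumann transition.

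Step 4 (energy bounds). Since $\hat C-G\in H^1_{0,\Gamma_{\text{in}}}(\Omega)$, I test the weak formulation of \eqref{construction} with $v=\hat C-G$, obtaining
\begin{equation*}
(D\grad\hat C,\grad\hat C)+\|\hat C\|^2=(D\grad\hat C,\grad G)+(\hat C,G).
\end{equation*}
Coercivity on the left and Cauchy--Schwarz with Young's inequality on the right (using $\|D\|_\infty\leq\beta_1$) yield, after absorbing $\tfrac{\lambda}{2}\|\grad\hat C\|^2$ and $\tfrac{1}{2}\|\hat C\|^2$,
\begin{equation*}
\lambda\|\grad\hat C\|^2+\|\hat C\|^2\leq \tfrac{\beta_1^2}{\lambda}\|\grad G\|^2+\|G\|^2.
\end{equation*}
Inserting the lift bound from Step 1 and collecting constants delivers the two stated inequalities in \eqref{b1}.

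The main obstacle I expect is not Lax--Milgram itself but Step 1: choosing $G$ so that its full $H^1(\Omega)$-norm is controlled by the weaker $L^2(\Gamma_{\text{in}})$-norm of $g$ (rather than by the natural $H^{1/2}(\Gamma_{\text{in}})$-norm implicit in the definition given in Section~\ref{sec:2}). This requires localizing $G$ to a thin strip along $\Gamma_{\text{in}}$ by a cut-off and exploiting the freedom in the constant $K$, which the paper explicitly allows to vary from line to line.
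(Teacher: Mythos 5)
Your Steps 2--4 follow the standard Lax--Milgram/energy route and are essentially sound, but Step 1 contains a genuine gap that the rest of the argument cannot recover from. There is no extension $G\in H^1(\Omega)$ of a general $g$ satisfying $\|G\|+\|\grad G\|\leq K\|g\|_{L^2(\Gamma_{\text{in}})}$ with $K$ independent of $g$: by the trace theorem, any $H^1(\Omega)$-function with trace $g$ on $\Gamma_{\text{in}}$ obeys $\|g\|_{H^{1/2}(\Gamma_{\text{in}})}\leq K\|G\|_{H^1(\Omega)}$ (indeed, the paper \emph{defines} $\|g\|_{H^{1/2}(\Gamma_{\text{in}})}$ as the infimum of $\|G\|_{H^1(\Omega)}$ over all such lifts), and the $H^{1/2}$-norm is not controlled by the $L^2$-norm. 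Your proposed fix --- localizing $G$ to a strip of width $\delta$ near $\Gamma_{\text{in}}$ --- makes matters worse: the cut-off shrinks $\|G\|_{L^2(\Omega)}$ but inflates $\|\grad G\|_{L^2(\Omega)}$ like $\delta^{-1/2}$, so the gradient bound fails as $\delta\to 0$. The ``freedom in the constant $K$'' cannot absorb a quantity that is unbounded over the unit ball of $L^2(\Gamma_{\text{in}})$. What your energy argument actually delivers, after inserting the correct lifting estimate $\|G\|_{H^1(\Omega)}\leq K\|g\|_{H^{1/2}(\Gamma_{\text{in}})}$, is the pair of bounds in \eqref{b1} with $\|g\|_{H^{1/2}(\Gamma_{\text{in}})}^2$ on the right-hand side instead of $\|g\|_{L^2(\Gamma_{\text{in}})}^2$.

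For comparison: the paper does not actually write out a proof of this lemma; it defers existence, uniqueness and the $H^2$-regularity (which is exactly where the Dirichlet--Neumann compatibility condition enters) to Grisvard's Theorem 2.4.2.7 and states the bounds \eqref{b1} without derivation. So your Steps 2--4 supply more detail than the paper does, and testing with $\hat C-G$ is the natural way to produce \eqref{b1}-type estimates --- but only in the $H^{1/2}(\Gamma_{\text{in}})$-norm of $g$. Be aware also that your Step 3 is asserted more confidently than is warranted: full $H^2$-regularity across a Dirichlet--Neumann junction is delicate (solutions generically behave like $r^{1/2}$ near the transition point), which is precisely why a result of Grisvard's calibre is invoked rather than ``standard elliptic regularity.''
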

\begin{remark}
The existence and uniqueness proof for the more general case of \Cref{Eu} can be found in \cite[Theorem 2.4.2.7]{grisvard2011elliptic}. 
 \end{remark}
 \begin{lemma}
 \label{chath}
 Let the domain $\Omega$ be a convex polyhedral. Given $g^h\in X_{\Gamma_{\text{in}}}^h$, there exists a $\hat{C^h}\in X^h$ such that $\hat{C^h}|_{\Gamma{-}}=g^h$ and $\|\hat{C^h}\|_{H^1(\Omega)}\leq K \|g^h\|_{H^{1/2}(\Gamma{-})}$.
 \end{lemma}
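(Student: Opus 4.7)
The plan is to construct $\hat{C^h}$ in two stages: first produce a continuous lift of $g^h$ into $H^1(\Omega)$ with controlled norm, then project that lift into $X^h$ via an interpolation operator that preserves finite-element boundary traces. Concretely, I would view $g^h \in X_{\Gamma_{\text{in}}}^h$ as an element of $H^{1/2}(\Gamma_{\text{in}})$ and apply \Cref{Eu} to obtain $\hat{C} \in H^2(\Omega)$ with $\hat{C}|_{\Gamma_{\text{in}}} = g^h$. Combining the two estimates in \eqref{b1} and using the continuous embedding $H^{1/2}(\Gamma_{\text{in}}) \hookrightarrow L^2(\Gamma_{\text{in}})$ gives $\|\hat{C}\|_{H^1(\Omega)} \leq K \|g^h\|_{H^{1/2}(\Gamma_{\text{in}})}$. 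Convexity of the polyhedron $\Omega$ is used precisely here, since it underlies the $H^2$-regularity statement cited in the remark after \Cref{Eu}.

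Next I would set $\hat{C^h} := I_h \hat{C}$, where $I_h : H^1(\Omega) \to X^h$ is a Scott--Zhang type quasi-interpolant adapted to the partition $\Gamma = \Gamma_{\text{in}} \cup \Gamma_{\text{n}} \cup \Gamma_{\text{out}}$. Two properties of $I_h$ are used: (a) $H^1$-stability, $\|I_h v\|_{H^1(\Omega)} \leq C_{SZ} \|v\|_{H^1(\Omega)}$, and (b) preservation of finite-element boundary traces, i.e., $(I_h v)|_{\Gamma_{\text{in}}} = v|_{\Gamma_{\text{in}}}$ whenever $v|_{\Gamma_{\text{in}}} \in X_{\Gamma_{\text{in}}}^h$. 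Applying (b) to $\hat{C}$ yields $\hat{C^h}|_{\Gamma_{\text{in}}} = g^h$, and combining (a) with the bound on $\hat{C}$ gives $\|\hat{C^h}\|_{H^1(\Omega)} \leq C_{SZ}\|\hat{C}\|_{H^1(\Omega)} \leq K\|g^h\|_{H^{1/2}(\Gamma_{\text{in}})}$, which is the claimed estimate (after absorbing constants into $K$).

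The main obstacle is justifying property (b): that $I_h$ reproduces each element of $X_{\Gamma_{\text{in}}}^h$ on $\Gamma_{\text{in}}$. This requires the variant of the Scott--Zhang construction in which, for every nodal degree of freedom lying on $\overline{\Gamma_{\text{in}}}$, the defining averaging functional is supported on a mesh face contained in $\Gamma_{\text{in}}$ rather than on an adjacent interior simplex. The polyhedral hypothesis on $\Omega$ is essential here, because it ensures that $\Gamma_{\text{in}}$ is exactly triangulated by faces of $\mathcal{T}_h$, so such face-localized averages exist and are polynomially exact on the relevant face. Once (a) and (b) are in hand, the remainder is bookkeeping of constants that depend on $\beta_1$, $\lambda$, the Scott--Zhang stability constant, and the trace embedding $H^{1/2}(\Gamma_{\text{in}}) \hookrightarrow L^2(\Gamma_{\text{in}})$.
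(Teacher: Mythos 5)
Your construction is correct, and it follows the paper's first step (solve the auxiliary elliptic problem \eqref{construction1} with data $g^h$ and bound the solution in $H^1(\Omega)$ by $\|g^h\|_{H^{1/2}(\Gamma_{\text{in}})}$) but then diverges in how the lift is brought back into $X^h$. The paper, in two dimensions, applies the nodal interpolant $\Pi_h$, which forces it to (i) upgrade $g^h$ to $H^{1/2+\epsilon}(\Gamma_{\text{in}})$ using piecewise smoothness, (ii) invoke an elliptic regularity shift $\hat{C}\in H^{1+\epsilon}(\Omega)$ with $\|\hat{C}\|_{1+\epsilon}\leq K\|g^h\|_{1/2+\epsilon,\Gamma_{\text{in}}}$, and (iii) absorb the resulting factor $h^{\epsilon}\|g^h\|_{1/2+\epsilon,\Gamma_{\text{in}}}$ via an inverse inequality on $X^h_{\Gamma_{\text{in}}}$; it only switches to Scott--Zhang in three dimensions, where the nodal interpolant is unavailable at this regularity. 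You instead use a trace-preserving, $H^1$-stable Scott--Zhang quasi-interpolant in all dimensions, which needs nothing beyond the $H^1$ a priori bound on $\hat{C}$: no regularity shift, no boundary inverse estimate, and no case split on the dimension (indeed, convexity of $\Omega$ is then not actually used, contrary to your remark that it underlies the argument). The one point you must justify --- that the averaging faces $\sigma_z$ for nodes in $\overline{\Gamma_{\text{in}}}$ can be taken inside $\Gamma_{\text{in}}$, so that the operator reproduces $X^h_{\Gamma_{\text{in}}}$-traces --- is exactly the standard boundary-adapted Scott--Zhang construction and is legitimate once $\Gamma_{\text{in}}$ is resolved by the triangulation, as the paper implicitly assumes. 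Both routes yield the same estimate; yours is the more uniform and slightly more economical argument.
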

 \begin{proof}
 When $\Omega$ is two-dimensional, we use a similar technique to 
 \cite{gunzburger1992treating}. 
 Under the compatibility condition ${D}\grad {g^h}\cdot \overrightarrow{n}=0,\ \text{when} \ x\in \Gamma_{\text{in}}\cap \Gamma_{\text{n}}$, let $\hat{C}\in H^1(\Omega)$ be the solution of
\begin{align}\label{construction1}
-\div{(D\grad \hat{C})}+\hat{C}&=0,\  x\in \Omega,
\\
\hat{C}
&={g^h},\ \text{when $x\in \Gamma_{\text{in}}$},\notag
\\
({D}\grad \hat{C})\cdot \overrightarrow{n}
&=0,\ \text{if $x\in \Gamma_{\text{n}}\cup \Gamma_{\text{out}}$}.\notag
\end{align}
Since $X^h$ is assumed to be a continuous finite element subspace, we see that $g^h$ is continuous and piecewise smooth along the boundary $\Gamma_{\text{in}}$, so that $g^h\in H^{1/2+\epsilon}(\Gamma_{\text{in}})$ for $0<\epsilon\leq \frac{1}{2}$. Thus, by elliptic regularity, we derive that $\hat{C}\in H^{1+\epsilon}(\Omega)$ and $\|\hat{C}\|_{1+\epsilon}\leq K\|g^h\|_{1/2+\epsilon,\Gamma_{\text{in}}}$ for $0<\epsilon\leq \frac{1}{2}$.
Let $\hat{C^h}:=\Pi_h\hat{C}$ be the $X^h$-interpolant of $\hat{C}$ so that $\hat{C^h}|_{\Gamma_{\text{in}}}=g^h$. Then, we have the estimates $\|\hat{C}-\Pi_h\hat{C}\|_1\leq K h^{\epsilon}\|\hat{C}\|_{1+\epsilon}$ which can be proven as in, e.g., \cite{dupont1980polynomial}. Thus, we get 
\begin{align*}
    \|\hat{C^h}\|_1=\|\Pi_h\hat{C}\|_1&\leq \|\hat{C}-\Pi_h\hat{C}\|_1+\|\hat{C}\|_1
    \leq K(h^{\epsilon}\|\hat{C}\|_{1+\varepsilon}+\|\hat{C}\|_1)
    \leq K\|g^h\|_{1/2,\Gamma_{\text{in}}},
\end{align*}
where in the last step we used an inverse assumption on $X^h_{\Gamma_{\text{in}}}$: there exists a constant $K$, independent of $h$, $p^h$ such that 
$$\|p^h\|_{s,\Gamma{\text{in}}}\leq K h^{t-s}\|p^h\|_{t,\Gamma_{\text{in}}},\ \forall p^h\in X^h_{\Gamma_{\text{in}}},\ 0\leq t\leq s\leq 1.$$
 Since the usual interpolant 
 used in the two-dimensional case is not defined in three dimensions for $H^r(\Omega)$-functions, $r\leq \frac{3}{2}$, we use the Scott-Zhang interpolant \cite{dauge2006elliptic} when $\Omega$ is three-dimensional. The Scott-Zhang interpolant is well-defined for any function in $H^1(\Omega)$ \cite{scott1990finite}.
 \end{proof}
 \subsection{Nonlinear, Explicit Isotherm}
 In this subsection, we start with the numerical analysis for the nonlinear, explicit isotherm. We consider the variational formulation 
 \eqref{variational1new}, 
 the semi-discrete in-space formulation 
 \eqref{variational2new}, and the fully discrete formulation 
 given in  \Cref{subsec:32}. 
 First, 
we show a total mass balance 
relation for this 
nonlinear explicit
isotherm. 
We denote the antiderivative of the isotherm by
$Q(\alpha)=\int_{0}^{\alpha} q(s) ds$,
and
\begin{align*}
\mathcal{E}(t)
&
=
\frac{3\omega}{4}\int_0^t\bigg\|D^{1/2}\grad C-\frac{8}{3}D^{-1/2}{\hat{C}}\bu\bigg\|^2\ dr
\\
& \quad 
+ \frac{3\omega}{4}\int_0^t\bigg\|D^{1/2}\grad C-\frac{8\rho_s q(\hat{C})(1-\omega)}{3\omega}D^{-1/2}\bu\bigg\|^2\ dr
\\
&
\quad
+ \frac{3\omega}{4}\int_0^t\bigg\|D^{1/2}\grad C-\frac{8}{3}D^{-1/2}\grad \hat{C}\bigg\|^2 dr
\\
&
\quad
+ \frac{3\omega}{4}\int_0^t\bigg\|D^{1/2}\grad C-\frac{8(1-\omega)\rho_s q'(\hat{C})}{3\omega}D^{-1/2}\grad \hat{C}\bigg\|^2 dr
\\
&
\quad
+ 
\|\omega C(t)+(1-\omega)\rho_s q(C(t))-2(\omega \hat{C}+(1-\omega)\rho_s q(\hat{C}))\|^2,
\end{align*}
also
\begin{align*}
\mathcal{B}(t)
&
=
\frac{3\omega}{4} \int_0^t \! \bigg\|\frac{8}{3}D^{-1/2}{\hat{C}}\bu\bigg\|^2\ dr
+ \frac{3\omega}{4}\int_0^t\bigg\|\frac{8\rho_s q(\hat{C})(1-\omega)}{3\omega}D^{-1/2}\bu\bigg\|^2\ dr
\\
& \quad
+
\frac{3\omega}{4}\int_0^t\bigg\|\frac{8}{3}D^{-1/2}\grad \hat{C}\bigg\|^2\ dr
+ \frac{3\omega}{4}\int_0^t\bigg\|\frac{8(1-\omega)\rho_s q'(\hat{C})}{3\omega}D^{-1/2}\grad \hat{C}\bigg\|^2 dr
\\
&
\quad
+ \|\omega C(0)+(1-\omega)\rho_s q(C(0))-2(\omega \hat{C}+(1-\omega)\rho_s q(\hat{C}))\|^2.
\end{align*}
\begin{theorem}
\label{thm:neth2neweq}
Assume that \ref{F1}-\ref{F6} are satisfied, 
$f\in L^2(0,T;L^2(\Omega))$, 
the variational 
problem 
\eqref{variational1new} has a solution $C\in L^{\infty}(0,T,L^2(\Omega))\cap L^{2}(0,T,H^1(\Omega))$, 
and 
let ${\hat{C}}$ be solution of (\ref{construction}).
Then the following total mass balance relation holds:
\begin{align}
\label{eq:stability-continuous}
&
\|\omega C(t)+(1-\omega)\rho_s q(C(t))\|^2+\omega\int_0^t \|D^{1/2}\grad C(r)\|^2\, dr
\\
& 
\qquad
+ 4(1-\omega)\rho_s\int_0^t 
\Bigg( 
\int_{\Omega}q'(C(r))(D^{1/2}\grad C(r))^2\, d\Omega
+ \int_{\Gamma_{\text{out}}}   Q(C(r))(\bu\cdot \overrightarrow{n})ds
\Bigg)\, dr
\notag
\\
&
\qquad
+ 2\omega\int_0^t \Bigg(\int_{\Gamma_{\text{out}}}   C^2(\bu\cdot \overrightarrow{n})ds\Bigg)\, dr
+\mathcal{E}(t)
\notag
\\
&
= \|\omega C_0+(1-\omega)\rho_s q(C_0)\|^2+4\int_0^t( f,\omega C+(1-\omega)\rho_s q(C)-(\omega \hat{C}+(1-\omega)\rho_s q(\hat{C})))\,dr
\notag
\\
& \qquad
- 2\omega\int_0^t \Bigg(\int_{\Gamma_{\text{in}}}   g^2(\bu\cdot \overrightarrow{n})ds\Bigg)\, dr-4(1-\omega)\rho_s\int_0^t \Bigg(\int_{\Gamma_{\text{in}}}   Q(g)(\bu\cdot \overrightarrow{n})ds\Bigg)\, dr+\mathcal{B}(t) .
\notag
\end{align}
\end{theorem}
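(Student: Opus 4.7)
The plan is to derive (\ref{eq:stability-continuous}) by testing the weak form (\ref{variational1new}) with the admissible function $v = 4(\phi - \hat{\phi})$, where $\phi := \omega C + (1-\omega)\rho_s q(C)$ and $\hat{\phi} := \omega\hat{C} + (1-\omega)\rho_s q(\hat{C})$. Since $C=g=\hat{C}$ on $\Gamma_{\text{in}}$, this $v$ vanishes there and lies in $H^1_{0,\Gamma_{\text{in}}}(\Omega)$; because $v\in H^1(\Omega)$, the $L^2$-projection $\mathcal{P}$ can be dropped via $(\mathcal{P}(q(C)),v)=(q(C),v)$, so the weak equation reads $(\phi_t,v)+(\bu\cdot\grad C,v)+(D\grad C,\grad v)=(f,v)$. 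The factor $4$ is chosen so that the coefficients of the boundary and cross terms emerge as stated.

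For the transient term, since $\hat{\phi}$ is time-independent, $4(\phi_t,\phi-\hat{\phi})=2\frac{d}{dt}\|\phi-\hat{\phi}\|^2$, and integrating on $(0,t)$ yields $2\|\phi(t)-\hat{\phi}\|^2-2\|\phi(0)-\hat{\phi}\|^2$. Applying the parallelogram identity $2\|\phi-\hat{\phi}\|^2=\|\phi\|^2+\|\phi-2\hat{\phi}\|^2-2\|\hat{\phi}\|^2$ at both endpoints cancels the $2\|\hat{\phi}\|^2$ pieces and produces $\|\phi(t)\|^2$ together with $\|\phi(t)-2\hat{\phi}\|^2$ on the LHS (the first norm of (\ref{eq:stability-continuous}) and the last summand of $\mathcal{E}(t)$), and the analogous pair at $t=0$ on the RHS (first term and last summand of $\mathcal{B}(t)$).

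For the convective term I would decompose $\phi-\hat{\phi}=\omega(C-\hat{C})+(1-\omega)\rho_s(q(C)-q(\hat{C}))$. The diagonal pieces become $2\omega\int_\Omega\bu\cdot\grad(C^2)$ and $4(1-\omega)\rho_s\int_\Omega\bu\cdot\grad Q(C)$ with $Q(\alpha)=\int_0^\alpha q(s)\,ds$; after the divergence theorem and $\div\bu=0$ these collapse to boundary integrals (the $\Gamma_{\text{n}}$ contribution vanishes since $\bu\cdot\overrightarrow n=0$ there), with the $\Gamma_{\text{out}}$ pieces staying on the LHS with coefficients $2\omega$ and $4(1-\omega)\rho_s$, and the $\Gamma_{\text{in}}$ pieces moving to the RHS after substituting $C=g$. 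The two off-diagonal pieces $-4\omega(\bu\cdot\grad C,\hat{C})$ and $-4(1-\omega)\rho_s(\bu\cdot\grad C,q(\hat{C}))$ are retained for the next step. The diffusion term expands, using $\grad(\phi-\hat{\phi})=(\omega+(1-\omega)\rho_s q'(C))\grad C-(\omega+(1-\omega)\rho_s q'(\hat{C}))\grad\hat{C}$, into $4\omega\|D^{1/2}\grad C\|^2+4(1-\omega)\rho_s\int_\Omega q'(C)(D^{1/2}\grad C)^2$ (the second one already matching the statement directly) plus the cross pieces $-4\omega(D\grad C,\grad\hat{C})-4(1-\omega)\rho_s q'(\hat{C})(D\grad C,\grad\hat{C})$.

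The crux is a purely algebraic rearrangement. I split $4\omega\|D^{1/2}\grad C\|^2=\omega\|D^{1/2}\grad C\|^2+4\cdot\tfrac{3\omega}{4}\|D^{1/2}\grad C\|^2$, and pair each of the four cross terms (two convective, two diffusive) with one copy of $\tfrac{3\omega}{4}\|D^{1/2}\grad C\|^2$ and a corresponding $\tfrac{3\omega}{4}\|A_i\|^2$, so that $\tfrac{3\omega}{4}\|D^{1/2}\grad C-A_i\|^2=\tfrac{3\omega}{4}\|D^{1/2}\grad C\|^2-\tfrac{3\omega}{2}(D^{1/2}\grad C,A_i)+\tfrac{3\omega}{4}\|A_i\|^2$ exactly absorbs that cross term for the prescribed vectors $A_1=\tfrac{8}{3}D^{-1/2}\hat{C}\bu$, $A_2=\tfrac{8\rho_s q(\hat{C})(1-\omega)}{3\omega}D^{-1/2}\bu$ and the analogous two in $\grad\hat{C}$. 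The four squared differences remain on the LHS as the first four summands of $\mathcal{E}(t)$ after time integration, the four $\tfrac{3\omega}{4}\|A_i\|^2$ form the matching four summands of $\mathcal{B}(t)$ on the RHS, and $\omega\|D^{1/2}\grad C\|^2$ survives as the principal diffusion contribution on the LHS. The forcing $(f,4(\phi-\hat{\phi}))$ is placed on the RHS directly. The main obstacle is really only bookkeeping: recognizing that exactly four equally-weighted $\tfrac{3\omega}{4}$-completed squares are the right devices to absorb all mixed cross terms while leaving behind precisely the coefficient $\omega$ on the principal diffusion norm, and choosing the $A_i$ so that $-2\cdot\tfrac{3\omega}{4}(D^{1/2}\grad C,A_i)$ matches each cross term produced by integration by parts without residue.
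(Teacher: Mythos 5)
Your proposal is correct and follows essentially the same route as the paper's proof: test with (a multiple of) $\omega C+(1-\omega)\rho_s q(C)-\bigl(\omega\hat{C}+(1-\omega)\rho_s q(\hat{C})\bigr)$, reduce the convective diagonal terms to boundary integrals via $\div\bu=0$ and the divergence theorem, and absorb the four $\hat{C}$-cross terms by the same $\tfrac{3\omega}{4}$-weighted completed squares with the same vectors $A_i$. The only differences are cosmetic bookkeeping — you scale the test function by $4$ at the outset and handle the transient term through $\tfrac{d}{dt}\|\phi-\hat{\phi}\|^2$ plus a parallelogram identity, whereas the paper keeps $\partial_t(\phi,\hat{\phi})$ separate, polarizes after time integration, and multiplies by $4$ at the end; these are algebraically equivalent.
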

\begin{proof}
Let ${\hat{C}}\in H^1(\Omega)$ such that ${{\hat{C}} \Big{|}}_{\Gamma_{\text{in}}}=g$. 
We test \eqref{variational1new} with $v=(\omega C+(1-\omega)\rho_s q(C))-(\omega \hat{C}+(1-\omega)\rho_s q(\hat{C}))\in H_{0,\Gamma_{\text{in}}}^1(\Omega).$
By using the divergence theorem and the boundary conditions, we get
\begin{align}
\label{s22neweq}
& 
( \bu\cdot \grad C,\omega C+(1-\omega)\rho_s q(C))
= \frac{\omega}{2}\int_{\Gamma_{\text{in}}}   g^2(\bu\cdot \overrightarrow{n})ds+  \frac{\omega}{2}\int_{\Gamma_{\text{out}}}   C^2(\bu\cdot \overrightarrow{n})ds
\\
&
\qquad
+(1-\omega)\rho_s \int_{\Gamma_{\text{in}}}   Q(g)(\bu\cdot \overrightarrow{n})ds+(1-\omega)\rho_s \int_{\Gamma_{\text{out}}}   Q(C)(\bu\cdot \overrightarrow{n})ds,
\notag
\end{align}
and
\begin{align}
& 
\omega(D\grad C, \grad C)+(1-\omega)\rho_s (q'(C)D\grad C, \grad C) 
\notag
\\
& 
=
\omega( D^{1/2} \grad C, D^{1/2}\grad C)+(1-\omega)\rho_s( q'(C)D^{1/2} \grad C, D^{1/2}\grad C)
\notag
\\
&
=
\omega \|D^{1/2}\grad C\|^2+(1-\omega)\rho_s\int_{\Omega}q'(C)(D^{1/2}\grad C)^2\ d\Omega.
\notag
\end{align}
Next, 
we move the terms involving ${\hat{C}}$ to the right-hand side
, and express them as follows
\begin{align}
( \bu\cdot \grad C,\omega{\hat{C}})
& 
=
\frac{3\omega}{8}(D^{1/2}\grad C,\frac{8}{3}D^{-1/2}{\hat{C}}\bu)
\notag
\\
& 
=
\frac{3\omega}{16}\|D^{1/2}\grad C\|^2+\frac{3\omega}{16}\bigg\|\frac{8}{3}D^{-1/2}{\hat{C}}\bu\bigg\|^2-\frac{3\omega}{16}\bigg\|D^{1/2}\grad C-\frac{8}{3}D^{-1/2}{\hat{C}}\bu\bigg\|^2,
\notag
\end{align}
and

\begin{align}
&
( \bu\cdot \grad C,(1-\omega)\rho_s q(\hat{C})) 
=
\frac{3\omega}{16}\|D^{1/2}\grad C\|^2
\notag
\\
& \qquad
+ \frac{3\omega}{16}\bigg\|\frac{8\rho_s q(\hat{C})(1-\omega)}{3\omega}D^{-1/2}\bu\bigg\|^2
- \frac{3\omega}{16}\bigg\|D^{1/2}\grad C-\frac{8\rho_s q(\hat{C})(1-\omega)}{3\omega}D^{-1/2}\bu\bigg\|^2.
\notag
\end{align}

Similarly,
\begin{align}
&
\omega(D\grad C, \grad \hat{C})
=
\frac{3\omega}{16}\|D^{1/2}\grad C\|^2+\frac{3\omega}{16}\bigg\|\frac{8}{3}D^{-1/2}\grad \hat{C}\bigg\|^2-\frac{3\omega}{16}\bigg\|D^{1/2}\grad C-\frac{8}{3}D^{-1/2}\grad \hat{C}\bigg\|^2,
\notag
\end{align}
and
\begin{align}
&
(D\grad C, (1-\omega)\rho_s q'(\hat{C})\grad \hat{C}) 
=
\frac{3\omega}{16}\|D^{1/2}\grad C\|^2
\notag
\\
& \,
+ \frac{3\omega}{16}\bigg\|\frac{8(1-\omega)\rho_s q'(\hat{C})}{3\omega}D^{-1/2}\grad \hat{C}\bigg\|^2-\frac{3\omega}{16}\bigg\|D^{1/2}\grad C-\frac{8(1-\omega)\rho_s q'(\hat{C})}{3\omega}D^{-1/2}\grad \hat{C}\bigg\|^2.
\notag
\end{align}
Finally,  we express the term involving the time derivative as
\begin{align}
& 
\bigg(\frac{\partial}{\partial t}(\omega C+(1-\omega)\rho_s q(C)),\omega \hat{C}+(1-\omega)\rho_s q(\hat{C}) \bigg)
\\
& 
=
\frac{\partial}{\partial t}( \omega C+(1-\omega)\rho_s q(C),\omega \hat{C}+(1-\omega)\rho_s q(\hat{C})).
\notag
\end{align}
Combining all, we get,
\begin{align*}
&
\frac{1}{2}\frac{\partial}{\partial t}\|\omega C+(1-\omega)\rho_s q(C)\|^2+\frac{\omega}{4} \|D^{1/2}\grad C\|^2+ \frac{\omega}{2}\int_{\Gamma_{\text{out}}}   C^2(\bu\cdot \overrightarrow{n})ds
\\
& \quad
+(1-\omega)\rho_s\int_{\Gamma_{\text{out}}}   Q(C)(\bu\cdot \overrightarrow{n})ds+(1-\omega)\rho_s\int_{\Omega}q'(C)(D^{1/2}\grad C)^2\ d\Omega
\\
& \quad
+\frac{3\omega}{16}\bigg\|D^{1/2}\grad C-\frac{8}{3}D^{-1/2}{\hat{C}}\bu\bigg\|^2
+\frac{3\omega}{16}\bigg\|D^{1/2}\grad C-\frac{8\rho_s q(\hat{C})(1-\omega)}{3\omega}D^{-1/2}\bu\bigg\|^2
\\
& \quad
+\frac{3\omega}{16}\bigg\|D^{1/2}\grad C-\frac{8}{3}D^{-1/2}\grad \hat{C}\bigg\|^2
+\frac{3\omega}{16}\bigg\|D^{1/2}\grad C-\frac{8(1-\omega)\rho_s q'(\hat{C})}{3\omega}D^{-1/2}\grad \hat{C}\bigg\|^2
\\
&
=( f,\omega C+(1-\omega)\rho_s q(C)-(\omega \hat{C}+(1-\omega)\rho_s q(\hat{C})))+\frac{3\omega}{16}\bigg\|\frac{8}{3}D^{-1/2}{\hat{C}}\bu\bigg\|^2
\\
& \quad
+\frac{3\omega}{16}\bigg\|\frac{8\rho_s q(\hat{C})(1-\omega)}{3\omega}D^{-1/2}\bu\bigg\|^2
+
\frac{3\omega}{16}\bigg\|\frac{8}{3}D^{-1/2}\grad \hat{C}\bigg\|^2
+\frac{3\omega}{16}\bigg\|\frac{8(1-\omega)\rho_s q'(\hat{C})}{3\omega}D^{-1/2}\grad \hat{C}\bigg\|^2
\\
& \quad
-\frac{\omega}{2}\int_{\Gamma_{\text{in}}}   g^2(\bu\cdot \overrightarrow{n})ds-(1-\omega)\rho_s\int_{\Gamma_{\text{in}}}   Q(g)(\bu\cdot \overrightarrow{n})ds
\\
& \quad
+\frac{\partial}{\partial t}( \omega C+(1-\omega)\rho_s q(C),\omega \hat{C}+(1-\omega)\rho_s q(\hat{C})).
\end{align*}
Integration on $[0,t]$ and the polarized identity 
yields \eqref{eq:stability-continuous}.
\end{proof}
A direct consequence of \Cref{thm:neth2neweq} is the following stability bound.
\begin{theorem}\label{neth2}
Assume that \ref{F1}-\ref{F6} are satisfied and the variational formulation given by \eqref{variational1new} has a solution $C\in L^{\infty}(0,T,L^2(\Omega))\cap L^{2}(0,T,H^1(\Omega))$ with $f\in L^2(0,T;L^2(\Omega))$. Let ${\hat{C}}$ be the continuous extension of the Dirichlet data $g$ inside the domain $\Omega$ and satisfies (\ref{construction}). The bounds on $\|\hat{C}\|^2$ and $\|\grad{\hat{C}}\|^2$ are given in \eqref{b1}. Let the antiderivative be $A(C)=\int_{0}^C s q'(s)ds$. Then we get the following bound:
\begin{align*}
&\|C(t)\|^2+\frac{4}{\omega}\int_{\Omega}(1-\omega)\rho_s A(C(t))d\Omega+\frac{\lambda}{\omega}\int_0^t \|\grad C(r)\|^2\ dr+\frac{2}{\omega}\int_0^t \bigg(\int_{\Gamma_{\text{out}}}   ((C)^2)(\bu\cdot \overrightarrow{n})ds\bigg)\ dr
\\&
\leq \frac{4}{\omega} \int_0^t
\frac{\|\bu\|_{\infty}^2}{\lambda} \|{\hat{C}}\|^2\ dr+\bigg(\frac{\lambda}{\omega}+\frac{4\beta_1^2}{\lambda\omega}\bigg)\int_0^t\|\grad {\hat{C}}\|^2\ dr   
-\frac{2}{\omega}\int_0^t \bigg(\int_{\Gamma_{\text{in}}}   ((g)^2)(\bu\cdot \overrightarrow{n})ds\bigg)\ dr
\\
& \qquad
+3\|C(0)\|^2 
 +\frac{8 K_{\text{PF}}^2}{\lambda\omega}\int_0^t \|f\|^2 \ dr+ \frac{16(\omega^2+(1-\omega)^2\rho_s^2K^2)}{\omega^2}\|{\hat{C}}\|^2
 \\
& \qquad
 +\frac{4}{\omega}\int_{\Omega}(1-\omega)\rho_s A(C(0))d\Omega.
\end{align*}
\end{theorem}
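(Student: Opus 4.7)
The plan is to derive the stated bound as a direct consequence of the mass balance identity \eqref{eq:stability-continuous} proven in \Cref{thm:neth2neweq}. The first step is to strip the LHS of every non-negative quantity that is not needed in the final statement: the squared norms $\|D^{1/2}\grad C-\cdots\|^2$ contained in $\mathcal{E}(t)$, the nonlinear adsorption dissipation $\int q'(C)(D^{1/2}\grad C)^2\, d\Omega$, and the outflow integral $\int_{\Gamma_{\text{out}}} Q(C)(\bu\cdot\overrightarrow{n})\, ds$ are all non-negative (the second since $q'(C)\geq \kappa_1>0$ by \ref{F5}, and the third since $Q(C)\geq 0$ for $C\geq 0$ by \Cref{prop:pr} and $\bu\cdot\overrightarrow{n}>0$ on $\Gamma_{\text{out}}$), so discarding them preserves the inequality.

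Next, I would expand $\|\omega C+(1-\omega)\rho_s q(C)\|^2=\omega^2\|C\|^2+2\omega(1-\omega)\rho_s(C,q(C))+(1-\omega)^2\rho_s^2\|q(C)\|^2$ and use the pointwise identity $Cq(C)=A(C)+Q(C)$, which follows by integration by parts from $A(\alpha)=\int_0^\alpha s\, q'(s)\, ds$. Dropping the non-negative $\int Q(C)$ and $\|q(C)\|^2$ pieces, and combining with the residual $\|A(t)-2B\|^2$ term of $\mathcal{E}(t)$ (where $B=\omega\hat{C}+(1-\omega)\rho_s q(\hat{C})$) via the polarization identity $\|X\|^2+\|X-2Y\|^2=2\|X-Y\|^2+2\|Y\|^2$, I obtain a lower bound of the form $\omega^2\|C(t)\|^2+4\omega(1-\omega)\rho_s\int_\Omega A(C(t))\, d\Omega$, modulo terms in $\|B\|^2$ and $\hat{C}$ that migrate to the RHS. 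The diffusion term is handled by $\omega\int \|D^{1/2}\grad C\|^2\geq \omega\lambda \int \|\grad C\|^2$ using the minimum-eigenvalue bound on $D$.

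On the RHS of the mass balance, the forcing contribution $4\int_0^t(f,A-B)\, dr$ is bounded by noting that $A-B$ vanishes on $\Gamma_{\text{in}}$ (since $C=\hat{C}=g$ there), so by the Lipschitz estimate $|q(C)-q(\hat{C})|\leq \kappa_2|C-\hat{C}|$ from \ref{F6} together with the Poincar\'e inequality of \Cref{poin2}, $\|A-B\|$ is controlled by $K_{PF}\|\grad(C-\hat{C})\|$; a Young's split then produces the $\frac{8K_{PF}^2}{\lambda\omega}\int\|f\|^2$ contribution while the gradient piece is partly absorbed into the LHS diffusion reserve and partly passed into a $\|\grad\hat{C}\|^2$ term on the RHS. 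The four squared-norm terms of $\mathcal{B}(t)$ are controlled via $\|D^{-1/2}\|_\infty^2\leq 1/\lambda$, $|q(\hat{C})|\leq \kappa_2|\hat{C}|$, $|q'(\hat{C})|\leq \kappa_2$, and $\|\bu\|_\infty$ (assumptions \ref{F2}, \ref{F3}, \ref{F5}, \ref{F6}), producing the $\frac{4\|\bu\|_\infty^2}{\omega\lambda}\|\hat{C}\|^2$ and $\frac{4\beta_1^2}{\lambda\omega}\|\grad\hat{C}\|^2$ contributions; the last term of $\mathcal{B}$, namely $\|A(0)-2B\|^2$, is expanded by triangle inequality into the initial-data pieces $3\|C_0\|^2$ and $\frac{4(1-\omega)\rho_s}{\omega}\int A(C_0)$, together with the $\frac{16(\omega^2+(1-\omega)^2\rho_s^2 K^2)}{\omega^2}\|\hat{C}\|^2$ coefficient.

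Dividing throughout by $\omega^2$ (valid since $0<\omega\leq 1$) then normalizes the $\|C(t)\|^2$ coefficient to $1$ and produces the asserted factors $\frac{\lambda}{\omega}$, $\frac{2}{\omega}$, and $\frac{4(1-\omega)\rho_s}{\omega}$ on the LHS. The main obstacle is the careful bookkeeping of constants through the successive Young's-inequality splits---in particular, choosing the Young constants so that, after absorbing the gradient portions arising from both the forcing term and the cross advection/diffusion terms involving $\hat{C}$, exactly $\frac{\lambda}{\omega}$ remains on the LHS in front of $\int_0^t\|\grad C\|^2$ (hence the appearance of the extra $\frac{\lambda}{\omega}\|\grad \hat{C}\|^2$ summand in the final coefficient $\frac{\lambda}{\omega}+\frac{4\beta_1^2}{\lambda\omega}$). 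Since the estimate is linear (not exponential) in $t$, Gronwall's inequality is not required; a single pass of Cauchy--Schwarz and Young's inequality suffices.
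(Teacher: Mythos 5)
Your high-level plan (read the bound off the mass-balance identity \eqref{eq:stability-continuous}) is the route the paper advertises, but it is not the route that actually produces the stated constants, and as written it cannot be closed. Reverse-engineering the coefficients shows the proof is a direct energy estimate: test \eqref{variational1new} with $v=C-\hat{C}$, use $\big((1-\omega)\rho_s q'(C)\partial_tC,\,C\big)=(1-\omega)\rho_s\frac{d}{dt}\int_\Omega A(C)\,d\Omega$, multiply by $4/\omega$, and split $(f,C-\hat{C})$, $(\bu\cdot\grad C,\hat{C})$, $(D\grad C,\grad\hat{C})$ by Young so that each absorbs exactly $\frac{\lambda}{\omega}\|\grad C\|^2$ out of the $\frac{4\lambda}{\omega}\|\grad C\|^2$ reserve. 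This is what generates $\frac{4\|\bu\|_\infty^2}{\omega\lambda}\|\hat{C}\|^2$, the summand $\frac{\lambda}{\omega}\|\grad\hat{C}\|^2$ inside $\big(\frac{\lambda}{\omega}+\frac{4\beta_1^2}{\lambda\omega}\big)$, the clean $\frac{8K_{\text{PF}}^2}{\lambda\omega}\int\|f\|^2$ (no $\kappa_2$), the $3\|C(0)\|^2$, and the coefficient $4$ on $\int A(C(t))$. None of these fall out of \eqref{eq:stability-continuous}.

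Concretely, two steps of your proposal fail. First, the forcing term: in \eqref{eq:stability-continuous} the entire diffusion reserve has already been spent on the four $\hat{C}$ cross terms, so the left side carries only $\omega\int_0^t\|D^{1/2}\grad C\|^2\,dr$, which after dividing by $\omega^2$ is exactly the $\frac{\lambda}{\omega}\int_0^t\|\grad C\|^2\,dr$ you must keep. There is nothing left to absorb the $\|\grad(C-\hat{C})\|^2$ piece of your Young split of $4\int_0^t(f,\cdot)\,dr$; you would have to either degrade the coefficient $\frac{\lambda}{\omega}$ or invoke Gronwall, which introduces an exponential factor absent from the statement (and contradicts your own final remark). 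Moreover your bound on $\|\omega(C-\hat{C})+(1-\omega)\rho_s(q(C)-q(\hat{C}))\|$ carries a factor $(\omega+(1-\omega)\rho_s\kappa_2)$ that would appear squared in front of $\int\|f\|^2$, not the stated $\frac{8K_{\text{PF}}^2}{\lambda\omega}$. Second, the coefficient $4$ on $\int_\Omega A(C(t))$: from $\|X\|^2+\|X-2Y\|^2=2\|X\|^2-4(X,Y)+4\|Y\|^2$ with $X=\omega C+(1-\omega)\rho_s q(C)$, any estimate $-4(X,Y)\geq-\delta\|X\|^2-\tfrac{4}{\delta}\|Y\|^2$ leaves $(2-\delta)\|X\|^2$, so you cannot simultaneously retain $2\|X\|^2$ (needed for $4\omega(1-\omega)\rho_s\int A(C)$, since $\|X\|^2$ only yields the coefficient $2$) and a coefficient $\omega^2$ on $\|C(t)\|^2$; the only repair is to pay the cross term out of $\omega^2\|C\|^2$ via the Lipschitz bound $\|X\|\le(\omega+(1-\omega)\rho_sK)\|C\|$, which then inflates the $\|\hat{C}\|^2$ constant to order $\frac{(\omega^2+(1-\omega)^2\rho_s^2K^2)^2}{\omega^4}$ rather than the stated $\frac{16(\omega^2+(1-\omega)^2\rho_s^2K^2)}{\omega^2}$. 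The same overshoot occurs for the $\mathcal{B}(t)$ terms, e.g.\ $\frac{3\omega}{4}\|\tfrac83 D^{-1/2}\hat{C}\bu\|^2/\omega^2=\frac{16\|\bu\|_\infty^2}{3\omega\lambda}\|\hat{C}\|^2>\frac{4\|\bu\|_\infty^2}{\omega\lambda}\|\hat{C}\|^2$. So your argument would at best prove a bound of the same shape with strictly worse constants, not the theorem as stated; the direct test with $v=C-\hat{C}$ is both simpler and exact.
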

%
\begin{proof}
See \cite[Theorem 18]{siddiqua2024spurious}.
\end{proof}
\begin{remark}
We note that in the case of Langmuir's isotherm, the antiderivative is
$$A(C(t)) = \ln(1+C)+\frac{1}{1+C}+{\rm constant}.$$
\end{remark}
Next, we 
turn to the stability 
of the semidiscrete in space approximations 
in \eqref{variational2new},
and 
denote:
\begin{align*}
& 
Q_h(\alpha)=\int_{0}^{\alpha}\mathcal{P}( q(s)) ds
,
\\ 
& 
\mathcal{E}_h(t ) = \frac{3\omega}{4}\int_0^t\bigg\|D^{1/2}\grad C_h-\frac{8}{3}D^{-1/2}{\hat{C_h}}\bu\bigg\|^2\ dr+\frac{3\omega}{4}\int_0^t\bigg\|D^{1/2}\grad C_h-\frac{8}{3}D^{-1/2}\grad \hat{C_h}\bigg\|^2\ dr
\\
&
\qquad
+ \frac{3\omega}{4}\int_0^t\bigg\|D^{1/2}\grad C_h-\frac{8\rho_s \mathcal{P}(q(\hat{C_h}))(1-\omega)}{3\omega}D^{-1/2}\bu\bigg\|^2\ dr
\\
&
\qquad
+\frac{3\omega}{4}\int_0^t\bigg\|D^{1/2}\grad C_h-\frac{8(1-\omega)\rho_s \mathcal{P}^1(q'(\hat{C_h}))}{3\omega}D^{-1/2}\grad \hat{C_h}\bigg\|^2\ dr
\\
&
\qquad+ \|\omega C_h(t)+(1-\omega)\rho_s \mathcal{P}(q(C_h(t)))-2(\omega \hat{C_h}+(1-\omega)\rho_s \mathcal{P}(q(\hat{C_h})))\|^2,
\end{align*}
and
\begin{align*}
\mathcal{B}_h(t)
&
=\frac{3\omega}{4}\int_0^t\bigg\|\frac{8}{3}D^{-1/2}{\hat{C}}\bu\bigg\|^2\ dr
+\frac{3\omega}{4}\int_0^t\bigg\|\frac{8\rho_s\mathcal{P}( q(\hat{C_h}))(1-\omega)}{3\omega}D^{-1/2}\bu\bigg\|^2\ dr
\\
&
+  \frac{3\omega}{4}\int_0^t\bigg\|\frac{8}{3}D^{-1/2}\grad \hat{C_h}\bigg\|^2\ dr
+  \frac{3\omega}{4}\int_0^t\bigg\|\frac{8(1-\omega)\rho_s \mathcal{P}^1(q'(\hat{C}))}{3\omega}D^{-1/2}\grad \hat{C_h}\bigg\|^2\, dr
\\
&
+ \|\omega C(0)+(1-\omega)\rho_s \mathcal{P}(q(C(0)))-2(\omega \hat{C_h}+(1-\omega)\rho_s \mathcal{P}(q(\hat{C_h
})))\|^2.
\end{align*}
\begin{theorem}
\label{neth2neweqh}
Assume that \ref{F1}-\ref{F6} are satisfied, 
$C_h$ solves the semi-discrete in space Finite Element formulation with nonlinear adsorption 
\eqref{variational2new}, 
$f\in L^2(0,T;L^2(\Omega))$,
${\hat{C}}$ is 
the 
solution of (\ref{construction}), 
$Q_h(\alpha)
\geq 0$, and $\mathcal{P}^1(q'(C_h))\geq 0$. 
The following total mass balance 
relation holds:
\begin{align*}
&
\|\omega C_h(t)+(1-\omega)\rho_s \mathcal{P}(q(C_h(t)))\|^2+\omega\int_0^t \|D^{1/2}\grad C_h(r)\|^2\, dr
\\
&
\quad
+ 4(1-\omega)\rho_s\int_0^t\Bigg(\int_{\Omega}\mathcal{P}^1(q'(C_h(r)))(D^{1/2}\grad C_h(r))^2\, d\Omega\Bigg)\, dr
\\
&
\quad
+ 4 (1-\omega)\rho_s\int_0^t \bigg(\int_{\Gamma_{\text{out}}}   Q(C_h(r))(\bu\cdot \overrightarrow{n})ds\bigg)\, dr
+ 2\omega\int_0^t \bigg(\int_{\Gamma_{\text{out}}}   C_h^2(\bu\cdot \overrightarrow{n})ds\bigg)\, dr
+\mathcal{E}_h(t)
\\
&
= 
\|\omega C(0)+(1-\omega)\rho_s \mathcal{P}(q(C_h(0)))\|^2
+\mathcal{B}_h(t) 
\\
&
\quad
+4\int_0^t( f,\omega C_h+(1-\omega)\rho_s \mathcal{P}(q(C_h))-(\omega \hat{C_h}+(1-\omega)\rho_s \mathcal{P}(q(\hat{C_h}))))\,dr
\\
&
\quad
- 2\omega\int_0^t \bigg(\int_{\Gamma_{\text{in}}}   g_h^2(\bu\cdot \overrightarrow{n})ds\bigg)\, dr-4(1-\omega)\rho_s\int_0^t \bigg(\int_{\Gamma_{\text{in}}}   Q(g_h)(\bu\cdot \overrightarrow{n})ds\bigg)\, dr.
\end{align*}
\end{theorem}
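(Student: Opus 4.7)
The plan is to mirror the continuous proof of \Cref{thm:neth2neweq} in the discrete setting of \eqref{variational2new}. First, using \Cref{chath}, I would pick a discrete lifting $\hat{C_h}\in X^h$ with $\hat{C_h}|_{\Gamma_{\text{in}}}=g_h$ and choose the test function
\begin{equation*}
v_h=\bigl(\omega C_h+(1-\omega)\rho_s\mathcal{P}_h(q(C_h))\bigr)-\bigl(\omega\hat{C_h}+(1-\omega)\rho_s\mathcal{P}_h(q(\hat{C_h}))\bigr),
\end{equation*}
which lies in $X_{0,\Gamma_{\text{in}}}^h(\Omega)$ because $C_h-\hat{C_h}$ and $\mathcal{P}_h(q(C_h))-\mathcal{P}_h(q(\hat{C_h}))$ both vanish on $\Gamma_{\text{in}}$ by construction.

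The time-derivative pairing splits via the polarized identity into $\frac12\frac{d}{dt}\|\omega C_h+(1-\omega)\rho_s\mathcal{P}_h(q(C_h))\|^2$ together with the cross term $\frac{d}{dt}(\omega C_h+(1-\omega)\rho_s\mathcal{P}_h(q(C_h)),\,\omega\hat{C_h}+(1-\omega)\rho_s\mathcal{P}_h(q(\hat{C_h})))$, exactly as in the continuous case. For the advection $(\bu\cdot\grad C_h,v_h)$, I would use $\div\bu=0$, the divergence theorem, and the boundary conditions $\bu\cdot\vec n=0$ on $\Gamma_{\text{n}}$, $C_h=g_h$ on $\Gamma_{\text{in}}$, to convert the $\omega$-piece into $\frac{\omega}{2}\int_{\Gamma_{\text{out}}}C_h^2\bu\cdot\vec n\,ds-\frac{\omega}{2}\int_{\Gamma_{\text{in}}}g_h^2\bu\cdot\vec n\,ds$, in perfect parallel to \eqref{s22neweq}, and analogously obtain $(1-\omega)\rho_s\int_{\Gamma_{\text{out}}}Q(C_h)\bu\cdot\vec n\,ds-(1-\omega)\rho_s\int_{\Gamma_{\text{in}}}Q(g_h)\bu\cdot\vec n\,ds$ from the nonlinear piece. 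For the diffusion $(D\grad C_h,\grad v_h)$, I would invoke \Cref{dop} to write $\grad\mathcal{P}_h(q(C_h))=\mathcal{P}^1(q'(C_h)\grad C_h)$, which produces the terms $\omega\|D^{1/2}\grad C_h\|^2+(1-\omega)\rho_s\int_\Omega\mathcal{P}^1(q'(C_h))(D^{1/2}\grad C_h)^2d\Omega$, both nonnegative by assumption \ref{F5} and the hypothesis $\mathcal{P}^1(q'(C_h))\ge 0$.

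Next, each of the four cross terms involving $\hat{C_h}$ on the advection and diffusion sides, e.g.\ $(\bu\cdot\grad C_h,\omega\hat{C_h})$, $(\bu\cdot\grad C_h,(1-\omega)\rho_s\mathcal{P}_h(q(\hat{C_h})))$, $\omega(D\grad C_h,\grad\hat{C_h})$, and $(D\grad C_h,(1-\omega)\rho_s\mathcal{P}^1(q'(\hat{C_h}))\grad\hat{C_h})$, would be split via the polarized identity $(a,b)=\frac{1}{16}\|a\|^2+\frac{1}{16}\|b\|^2-\frac{1}{16}\|a-b\|^2$ rescaled by the coefficient $\tfrac{3\omega}{16}$, so that the positive $\|D^{1/2}\grad C_h\|^2$ contributions combine into the $\omega\|D^{1/2}\grad C_h\|^2$ dissipation term (four copies of $\tfrac{3\omega}{16}$ summing to $\tfrac{3\omega}{4}$, leaving the net $\tfrac{\omega}{4}$ that ultimately integrates to $\omega\int_0^t\|D^{1/2}\grad C_h\|^2$ after multiplication by $4$), while the residual squared pieces populate $\mathcal{E}_h(t)$ and the data pieces populate $\mathcal{B}_h(t)$. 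Integrating on $[0,t]$ and multiplying by $4$ yields the claimed identity.

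The principal obstacle is the advection term $(\bu\cdot\grad C_h,\mathcal{P}_h(q(C_h)))$: on the continuous side the chain-rule identity $\bu\cdot\grad C\,q(C)=\bu\cdot\grad Q(C)$ makes the divergence-theorem reduction immediate, but the projection $\mathcal{P}_h$ a priori destroys this pointwise identity, so reducing to the boundary integral of $Q(C_h)$ stated in the theorem requires the hypothesis $Q_h(\alpha)\ge 0$ together with the self-adjointness of $\mathcal{P}_h$ in $L^2$ and Galerkin orthogonality against $\bu\cdot\grad C_h\in L^2(\Omega)$, which together permit treating $\mathcal{P}_h(q(C_h))$ as $q(C_h)$ for the purposes of the boundary flux identity. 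The analogous care is needed for the $\mathcal{P}^1(q'(\hat{C_h}))$-diffusion cross term via \Cref{dop}; once these two commutation/projection steps are justified, the remainder of the argument is a direct transcription of the continuous proof.
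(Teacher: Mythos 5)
Your proposal follows essentially the same route as the paper's proof: the same test function $v_h=(\omega C_h+(1-\omega)\rho_s\mathcal{P}(q(C_h)))-(\omega\hat{C_h}+(1-\omega)\rho_s\mathcal{P}(q(\hat{C_h})))$, the same divergence-theorem reduction of the advection term, the same four polarized cross terms with coefficient $\tfrac{3\omega}{16}$ populating $\mathcal{E}_h$ and $\mathcal{B}_h$, and the same final integration on $[0,t]$ with polarization of the time-derivative cross term. The ``principal obstacle'' you flag --- that $\mathcal{P}(q(C_h))$ need not obey a pointwise chain rule --- is genuine, but the paper does not resolve it either: it simply defines $Q_h(\alpha)=\int_0^{\alpha}\mathcal{P}(q(s))\,ds$ and asserts the boundary-flux identity \eqref{s22neweqsemi} by analogy with the continuous case (the theorem statement even writes $Q$ where the proof produces $Q_h$), and your proposed repair via self-adjointness plus Galerkin orthogonality would require $\bu\cdot\grad C_h\in X^h$, which fails in general, so it is no more rigorous than the paper's own assertion.
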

\begin{proof}
Let ${\hat{C}_h}\in X^h(\Omega)$ such that ${{\hat{C}_h} \Big{|}}_{\Gamma_{\text{in}}}=g_h$ be the interpolant given in Lemma \ref{chath},
and
$\mathcal{P}$, 
$\mathcal{P}^1$ be the orthogonal projections with respect to the $L^2$ 
and $H^1$ inner products , respectively. 
Then we test \eqref{variational2new} with 
$v_h=(\omega C_h+(1-\omega)\rho_s \mathcal{P}(q(C_h)))-(\omega \hat{C_h}+(1-\omega)\rho_s \mathcal{P}(q(\hat{C_h})))\in X^h_{0,\Gamma_{\text{in}}}(\Omega)$
to obtain
\begin{align}
\label{eq1vc2neweqdis}
&
\bigg(\frac{\partial}{\partial t}(\omega C_h+(1-\omega)\rho_s q(C_h)),\omega C_h+(1-\omega)\rho_s \mathcal{P}(q(C_h))\bigg)+\omega(D\grad C_h, \grad C_h)
\\
&
\quad
+\bigg(\bu\cdot\grad C_h,\omega C_h+(1-\omega)\rho_s \mathcal{P}(q(C_h))\bigg)
   +(1-\omega)\rho_s (D\grad C_h, \mathcal{P}^1(q'(C_h)\grad C_h))
\notag
\\
&
=
( f,\omega C_h+(1-\omega)\rho_s \mathcal{P}(q(C_h))-(\omega \hat{C_h}+(1-\omega)\rho_s \mathcal{P}(q(\hat{C_h}))))
\notag\\
&
\quad
+\bigg(\frac{\partial}{\partial t}(\omega C_h+(1-\omega)\rho_s q(C_h)),(\omega \hat{C_h}+(1-\omega)\rho_s \mathcal{P}(q(\hat{C_h})))\bigg)+\omega(D\grad C_h, \grad \hat{C_h})
\notag
\\
&
\quad
+\bigg(\bu\cdot\grad C,(\omega \hat{C_h}+(1-\omega)\rho_s \mathcal{P}(q(\hat{C_h})))\bigg)
   +(1-\omega)\rho_s (D\grad C_h, \mathcal{P}^1(q'(\hat{C_h})\grad \hat{C_h})).
\notag   
\end{align}
We rewrite the first term in the left hand side as
\begin{align}
\label{s21neweq}
\bigg(\frac{\partial}{\partial t}(\omega C_h+(1-\omega)\rho_s q(C_h)),\omega C_h+(1-\omega)\rho_s \mathcal{P}(q(C_h))\bigg)
=
\frac{1}{2}\frac{\partial}{\partial t}\|\omega C_h+(1-\omega)\rho_s \mathcal{P}(q(C_h))\|^2.
\end{align}
Following the technique 
used in 
\Cref{thm:neth2neweq}
we get
\begin{align}
\label{s22neweqsemi}
&( \bu\cdot \grad C_h,\omega C_h+(1-\omega)\rho_s \mathcal{P}(q(C_h)))
\\&= \frac{\omega}{2}\int_{\Gamma_{\text{in}}}   g_h^2(\bu\cdot \overrightarrow{n})ds+  \frac{\omega}{2}\int_{\Gamma_{\text{out}}}   C_h^2(\bu\cdot \overrightarrow{n})ds
\notag
\\
&
\quad
+(1-\omega)\rho_s \int_{\Gamma_{\text{in}}}   Q_h(g_h)(\bu\cdot \overrightarrow{n})ds+(1-\omega)\rho_s \int_{\Gamma_{\text{out}}}   Q_h(C_h)(\bu\cdot \overrightarrow{n})ds
,
\notag
\end{align}
and
\begin{align}
&\omega(D\grad C_h, \grad C_h)+(1-\omega)\rho_s \mathcal{P}^1(q'(C_h)D\grad C_h, \grad C_h) 
\\
&
=\omega \|D^{1/2}\grad C_h\|^2+(1-\omega)\rho_s\int_{\Omega}\mathcal{P}^1(q'(C_h))(D^{1/2}\grad C_h)^2\ d\Omega.\notag
\end{align}
Also, the terms on the right-hand side 
write as
\begin{align}
\label{s24neweqsemi}
( \bu\cdot \grad C_h,\omega{\hat{C_h}})
&
=\frac{3\omega}{16}\|D^{1/2}\grad C_h\|^2+\frac{3\omega}{16}\bigg\|\frac{8}{3}D^{-1/2}{\hat{C_h}}\bu\bigg\|^2-\frac{3\omega}{16}\bigg\|D^{1/2}\grad C_h-\frac{8}{3}D^{-1/2}{\hat{C_h}}\bu\bigg\|^2,
    \end{align}
and
\begin{align}
( \bu\cdot \grad C_h,(1-\omega)\rho_s \mathcal{P}(q(\hat{C_h}))) 
\notag
&
=\frac{3\omega}{16}\|D^{1/2}\grad C_h\|^2+\frac{3\omega}{16}\bigg\|\frac{8\rho_s \mathcal{P}(q(\hat{C_h}))(1-\omega)}{3\omega}D^{-1/2}\bu\bigg\|^2
\\
&\qquad
-\frac{3\omega}{16}\bigg\|D^{1/2}\grad C_h-\frac{8\rho_s \mathcal{P}(q(\hat{C_h}))(1-\omega)}{3\omega}D^{-1/2}\bu\bigg\|^2.\notag
    \end{align}
Moreover,
\begin{align}
\label{s25neweqsemi}
&
\omega(D\grad C_h, \grad \hat{C_h})
\\
&
=\frac{3\omega}{16}\|D^{1/2}\grad C_h\|^2+\frac{3\omega}{16}\bigg\|\frac{8}{3}D^{-1/2}\grad \hat{C_h}\bigg\|^2-\frac{3\omega}{16}\bigg\|D^{1/2}\grad C_h-\frac{8}{3}D^{-1/2}\grad \hat{C_h}\bigg\|^2,
\notag
\\
&
(D\grad C_h, (1-\omega)\rho_s \mathcal{P}^1(q'(\hat{C_h}))\grad \hat{C_h}) 
\\
&
=\frac{3\omega}{16}\|D^{1/2}\grad C_h\|^2+\frac{3\omega}{16}\bigg\|\frac{8(1-\omega)\rho_s \mathcal{P}^1(q'(\hat{C_h}))}{3\omega}D^{-1/2}\grad \hat{C_h}\bigg\|^2
\notag
\\
&
\qquad
-\frac{3\omega}{16}\bigg\|D^{1/2}\grad C_h-\frac{8(1-\omega)\rho_s \mathcal{P}^1(q'(\hat{C_h}))}{3\omega}D^{-1/2}\grad \hat{C_h}\bigg\|^2
,
\notag
\end{align}
and 
\begin{align}
\label{s27neweqsemi}
&
\bigg(\frac{\partial}{\partial t}(\omega C_h+(1-\omega)\rho_s q(C_h)),\omega \hat{C_h}+(1-\omega)\rho_s \mathcal{P}(q(\hat{C_h}))\bigg)
\\
&
=\frac{\partial}{\partial t}( \omega C_h+(1-\omega)\rho_s q(C_h),\omega \hat{C_h}+(1-\omega)\rho_s \mathcal{P}(q(\hat{C_h})))
.
\notag
\end{align}
Combining \eqref{s21neweq
}-\eqref{s27neweqsemi} 
and
integrating both sides from $0$ to $t$, we obtain
\begin{align*}
&
\frac{1}{2}\|\omega C_h(t)+(1-\omega)\rho_s \mathcal{P}(q(C(t)))\|^2+\frac{\omega}{4}\int_0^t \|\grad C_h(r)\|^2\ dr+\frac{\omega}{2}\int_0^t \bigg(\int_{\Gamma_{\text{out}}}   C_h^2(\bu\cdot \overrightarrow{n})ds\bigg)\ dr
\\
&
\quad
+(1-\omega)\rho_s\int_0^t 
\int_{\Gamma_{\text{out}}}   Q_h(C_h(r))(\bu\cdot \overrightarrow{n})ds
\ dr+\frac{3\omega}{16}\int_0^t\bigg \|D^{1/2}\grad C_h(r)-\frac{8}{3}D^{-1/2}{\hat{C_h}}\bu\bigg\|^2\ dr
\\
&
\quad
+(1-\omega)\rho_s\int_0^t
\int_{\Omega}\mathcal{P}^1(q'(C_h(r)))(D^{1/2}\grad C_h(r))^2\ d\Omega
\ dr\notag
\\
&
\quad
+\frac{3\omega}{16}\int_0^t \bigg\|D^{1/2}\grad C_h(r)-\frac{8\rho_s \mathcal{P}(q(\hat{C_h}))(1-\omega)}{3\omega}D^{-1/2}\bu\bigg\|^2\ dr\notag
\\
&
\quad
+ \frac{3\omega}{16}\int_0^t \bigg\|D^{1/2}\grad C_h(r)-\frac{8}{3}D^{-1/2}\grad \hat{C_h}\bigg\|^2\ dr\notag
\\
&
\quad
+ \frac{3\omega}{16}\int_0^t\bigg\|D^{1/2}\grad C_h(r)-\frac{8(1-\omega)\rho_s \mathcal{P}^1(q'(\hat{C_h}))}{3\omega}D^{-1/2}\grad \hat{C_h}\bigg\|^2\ dr\notag
\\
&
=
\int_0^t( f,\omega C_h+(1-\omega)\rho_s \mathcal{P}(q(C_h))-(\omega \hat{C_h}+(1-\omega)\rho_s \mathcal{P}(q(\hat{C_h}))))\ dr+\frac{3\omega}{16}\int_0^t\bigg\|\frac{8}{3}D^{-1/2}{\hat{C_h}}\bu\bigg\|^2\, dr\notag
\\
&
\quad
+\frac{3\omega}{16}\int_0^t\bigg\|\frac{8\rho_s \mathcal{P}(q(\hat{C_h}))(1-\omega)}{3\omega}D^{-1/2}\bu\bigg\|^2\, dr
+
\frac{3\omega}{16}\int_0^t\bigg\|\frac{8}{3}D^{-1/2}\grad \hat{C_h}\bigg\|^2\, dr\notag
\\
&
\quad
+\frac{3\omega}{16}\int_0^t\bigg\|\frac{8(1-\omega)\rho_s \mathcal{P}^1(q'(\hat{C_h}))}{3\omega}D^{-1/2}\grad \hat{C_h}\bigg\|^2\, dr
+\frac{1}{2}\|\omega C_h(0)+(1-\omega)\rho_s \mathcal{P}(q(C_h(0)))\|^2\notag
\\
&
\quad
-\frac{\omega}{2}\int_0^t \bigg(\int_{\Gamma_{\text{in}}}   g_h^2(\bu\cdot \overrightarrow{n})ds\bigg)\, dr-(1-\omega)\rho_s\int_0^t \bigg(\int_{\Gamma_{\text{in}}}   Q_h(g_h)(\bu\cdot \overrightarrow{n})ds\bigg)\, dr\notag
\\
&
\quad
+( \omega C_h(t)+(1-\omega)\rho_s q(C_h(t)),\omega \hat{C_h}+(1-\omega)\rho_s \mathcal{P}(q(\hat{C_h})))\notag
\\
&
\quad
-( \omega C_h(0)+(1-\omega)\rho_s q(C_h(0)),\omega \hat{C_h}+(1-\omega)\rho_s \mathcal{P}(q(\hat{C_h}))).
\notag
\end{align*}
Writing the last two terms as 
\begin{align*}
&( \omega C_h(t)+(1-\omega)\rho_s q(C_h(t)),\omega \hat{C_h}+(1-\omega)\rho_s \mathcal{P}(q(\hat{C_h})))
\\&=\frac{1}{4}\| ( \omega C_h(t)+(1-\omega)\rho_s q(C_h(t))\|^2+\frac{1}{4} \|2(\omega \hat{C_h}+(1-\omega)\rho_s \mathcal{P}(q(\hat{C_h})))\|^2
\\
&
\quad
-\frac{1}{4} \|\omega C_h(t)+(1-\omega)\rho_s q(C_h(t))-2(\omega \hat{C_h}+(1-\omega)\rho_s \mathcal{P}(q(\hat{C_h})))\|^2,
\end{align*}
and
\begin{align*}
&
-( \omega C_h(0)+(1-\omega)\rho_s q(C_h0)),\omega \hat{C_h}+(1-\omega)\rho_s \mathcal{P}(q(\hat{C_h})))
\\&=-\frac{1}{4}\| ( \omega C_h(0)+(1-\omega)\rho_s q(C_h(0))\|^2-\frac{1}{4} \|2(\omega \hat{C_h}+(1-\omega)\rho_s \mathcal{P}(q(\hat{C_h})))\|^2
\\
&
\quad
+\frac{1}{4} \|\omega C_h(0)+(1-\omega)\rho_s q(C_h(0))-2(\omega \hat{C_h}+(1-\omega)\rho_s \mathcal{P}(q(\hat{C_h})))\|^2
,
\end{align*}
yields the claimed result.
\end{proof}
\subsubsection{Semi-discrete in space error estimate}
The following result 
gives an a priori error estimate 
of the semi-discrete in space approximation \eqref{timeintegfemsemi}
for the case of nonlinear adsorption.
\begin{theorem}\label{thnlerr}
Assume that \ref{F1}-\ref{F7} are satisfied, 
the variational formulation \eqref{timeintegv} with nonlinear adsorption 
has an exact solution $C\in H^1(0,T,H^{k+1}(\Omega))$, and $C_h$ solves the semi-discrete in space Finite Element formulation 
\eqref{timeintegfemsemi}. Then for all $1\leq r \leq k+1$ and each $T>0$ 
we have
\begin{align}
\label{ce11nl}
& 
{\omega}\int_0^T \|(C-C_h)\|^2dt
    + \|\int_0^TD^{1/2}\grad (C-C_h)dt'\|^2
\leq 
h^{2r-2}\int_0^T\|C\|_r^2dt
\\
& 
\quad
\times
{\rm exp}\bigg({T}+\frac{3T\|u\|_{\infty}^2\|D^{-1/2}\|_{\infty}^2}{\omega}\bigg)
    \bigg({4\|D^{1/2}\|_{\infty}^2}
+{3\omega}+4\|u\|_{\infty}^2\|D^{-1/2}\|_{\infty}^2+\frac{3(1-\omega)^2\rho_s^2 \kappa_2^2}{\omega}\bigg)K
^2
.
\notag
\end{align}    
\end{theorem}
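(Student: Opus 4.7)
The plan is to carry out a Nochetto--Verdi style time-integrated error argument on the semi-discrete system \eqref{timeintegfemsemi}. First, I would subtract \eqref{timeintegfemsemi} from \eqref{timeintegv} tested against the same $v_h\in X_{0,\Gamma_{\text{in}}}^h(\Omega)$, obtaining the error identity
\begin{equation*}
\omega(C-C_h,v_h) + (1-\omega)\rho_s(q(C)-q(C_h),v_h) + \bigg(\int_0^t\bu\cdot\grad(C-C_h)\,dt',v_h\bigg) + \bigg(\int_0^tD\grad(C-C_h)\,dt',\grad v_h\bigg) = 0.
\end{equation*}
Here the initial data, forcing, and boundary contributions cancel, provided $C_h(0)$ is chosen as the $L^2$-projection of $C_0$ onto $X^h$ and the natural boundary term on $\Gamma_{\text{n}}\cup\Gamma_{\text{out}}$ vanishes after Green's theorem since $v_h\in X_{0,\Gamma_{\text{in}}}^h$. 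I would then split $C-C_h=\eta+\phi_h$, where $\eta=C-\hat{C}_h$ and $\hat{C}_h\in X^h$ satisfies $\hat{C}_h\big|_{\Gamma_{\text{in}}}=g_h$ as guaranteed by \Cref{interpolant}, so that $\phi_h=\hat{C}_h-C_h\in X_{0,\Gamma_{\text{in}}}^h$ is an admissible test function.

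Next, I would test the error identity with $v_h=\phi_h(t)$. Since $D$ is time-independent and symmetric positive definite, the key identity
\begin{equation*}
\bigg(\int_0^t D\grad \phi_h\,dt',\grad\phi_h\bigg)=\tfrac{1}{2}\tfrac{d}{dt}\bigg\|\int_0^t D^{1/2}\grad \phi_h\,dt'\bigg\|^2
\end{equation*}
converts the coercive part of the time-integrated diffusion contribution into a time derivative of the target quantity. For the reaction term I would use the mean-value form $q(C)-q(C_h)=q'(\xi)(C-C_h)$ with $0<\kappa_1\leq q'(\xi)\leq\kappa_2$ from \ref{F5}--\ref{F6}, yielding the sign-definite piece $(q'(\xi)\phi_h,\phi_h)\geq 0$ (to be dropped) plus an $\eta$-dependent perturbation controlled by Cauchy--Schwarz with Lipschitz constant $\kappa_2$. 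After integrating on $[0,t]$ and tuning Young's inequality parameters to absorb fractions of $\omega\int_0^t\|\phi_h\|^2\,ds$ and $\|\int_0^t D^{1/2}\grad \phi_h\,ds'\|^2$ onto the left-hand side, one arrives at an inequality of the schematic form
\begin{equation*}
\omega\!\int_0^t\!\|\phi_h\|^2\,ds+\bigg\|\!\int_0^t\!D^{1/2}\grad \phi_h\,ds'\bigg\|^2\leq(\text{interpolation terms in }\eta)+K\tfrac{\|\bu\|_\infty^2\|D^{-1/2}\|_\infty^2}{\omega}\!\int_0^t\!\bigg\|\!\int_0^s\!D^{1/2}\grad\phi_h\,ds''\bigg\|^2 ds.
\end{equation*}

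Integral Gronwall applied to $F(t)=\|\int_0^t D^{1/2}\grad\phi_h\,ds\|^2$ then produces the exponential factor $\exp(T+3T\|\bu\|_\infty^2\|D^{-1/2}\|_\infty^2/\omega)$ appearing in \eqref{ce11nl}. I would close the argument by invoking the approximation property \eqref{inq4} to bound $\|\eta\|_1\leq K h^{r-1}\|C\|_r$, combined with the triangle inequalities $\|C-C_h\|^2\leq 2\|\phi_h\|^2+2\|\eta\|^2$ and $\|\int_0^T D^{1/2}\grad(C-C_h)\,dt'\|^2\leq 2\|\int_0^T D^{1/2}\grad\phi_h\,dt'\|^2+2\|\int_0^T D^{1/2}\grad\eta\,dt'\|^2$. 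The main obstacle I anticipate is the treatment of the time-integrated advection $\bigl(\int_0^t\bu\cdot\grad(C-C_h)\,dt',\phi_h\bigr)$: to produce the precise exponent $3T\|\bu\|_\infty^2\|D^{-1/2}\|_\infty^2/\omega$ one must factor $\bu\cdot\grad=(D^{-1/2}\bu)\cdot(D^{1/2}\grad)$ and apply Young's inequality so that the $D^{1/2}\grad$ piece is absorbed into the coercive diffusion term while the surviving $\|\phi_h\|^2$ feeds the Gronwall loop, and this delicate balancing dictates the multiplicative constants in \eqref{ce11nl}.
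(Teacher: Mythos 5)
Your proposal is correct and follows essentially the same route as the paper: subtract the two time-integrated formulations, test with (up to sign) $C_h-\hat{C}_h$, use $\bigl(\int_0^tD\grad(\cdot)\,dt',\grad(\cdot)\bigr)=\tfrac12\tfrac{d}{dt}\|\int_0^tD^{1/2}\grad(\cdot)\,dt'\|^2$, exploit monotonicity and Lipschitz continuity of $q$, factor the advection as $(D^{-1/2}\bu)\cdot(D^{1/2}\grad)$, apply Gronwall, and finish with Lemma \ref{interpolant}. The only cosmetic difference is that the paper keeps the full error $C-C_h$ as the energy variable (so no final triangle inequality is needed and the stated constants come out directly), whereas your $\eta+\phi_h$ splitting recovers $C-C_h$ at the end at the cost of extra factors of two in the constants.
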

\vspace{-0.2cm}
\begin{proof}
Let $v=v_h\in X_{0,\Gamma_{\text{in}}}^h\subset H_{0,\Gamma_{\text{in}}}^1(\Omega)$ in \eqref{timeintegv},
and then subtract \eqref{timeintegfemsemi} from \eqref{timeintegv} to obtain 
\begin{align}
\label{nleqerr0timeinteg}
&
0 
= 
(\omega (C-C_h),v_h)+((1-\omega)\rho_s( q(C)-q(C_h)),v_h)+\bigg(\int_0^tu\cdot\grad (C-C_h) dt',v_h\bigg)
\\
&
\qquad
-(\div\int_0^tD\grad (C-C_h)dt',v_h)
        ,\quad \forall v_h\in X_{0,\Gamma_{in}}^h(\Omega).\notag
\end{align}
Choosing $v_h=C_h-{\hat{C_h}}=C_h-C+C-{\hat{C_h}}\in X_{0,\Gamma_{\text{in}}}^h$ 
gives
\begin{align}
&
(\omega (C-C_h),C-C_h)+((1-\omega)\rho_s( q(C)-q(C_h)),C-C_h)
\notag
\\
&
\quad
+\bigg(\int_0^tu\cdot\grad (C-C_h) dt',C-C_h\bigg)
 +\bigg(\int_0^tD\grad (C-C_h)dt',\grad(C-C_h)\bigg)\notag
\\
&
=(\omega( C-C_h),C-{\hat{C_h}})+((1-\omega)\rho_s( q(C)-q(C_h)),C-{\hat{C_h}})
\notag
\\
&
\quad
+\bigg(\int_0^tu\cdot\grad (C-C_h) dt',C-{\hat{C_h}}\bigg)+\bigg(\int_0^tD\grad (C-C_h)dt',\grad(C-{\hat{C_h}})\bigg).\notag
\end{align}
The Cauchy-Schwarz inequality 
and integration on $[0,T]$ yields
\begin{align}
&
{\omega}\int_0^T\|C-C_h\|^2dt+2(1-\omega)\rho_s\int_0^T\int_{\Omega}\bigg( \int_0^1(q'(\theta C+(1-\theta)C_h))d\theta \bigg)(C-C_h)^2d\Omega\, dt
\notag
\\
&
\quad
+ \bigg\|\int_0^TD^{1/2}\grad (C-C_h)dt'\bigg\|^2\notag
\\
&
\leq 
\bigg({1}+\frac{3\|u\|_{\infty}^2\|D^{-1/2}\|_{\infty}^2}{\omega}\bigg)\int_0^T\|\int_0^tD^{1/2}\grad (C-C_h)dt'\|^2dt
    +{4\|D^{1/2}\|_{\infty}^2}\int_0^T\|\grad(C-{\hat{C_h}})\|^2dt\notag
\\
&
\quad
+\bigg({3\omega}+4\|u\|_{\infty}^2\|D^{-1/2}\|_{\infty}^2+\frac{3(1-\omega)^2\rho_s^2 \kappa_2^2}{\omega}\bigg)\int_0^T\|C-{\hat{C_h}}\|^2dt.\notag
\end{align}
Discarding the second term in the left hand side and using the 
Gronwall's inequality 
gives
\begin{align}
&
{\omega}\int_0^T \|(C-C_h)\|^2dt
    +\bigg\|\int_0^TD^{1/2}\grad (C-C_h)dt'\bigg\|^2
\notag    
\\
&
\leq 
\text{exp} \bigg( {T}+\frac{3T\|u\|_{\infty}^2\|D^{-1/2}\|_{\infty}^2}{\omega} \bigg)
\notag
\\
&
\quad
\times \bigg( {4\|D^{1/2}\|_{\infty}^2}+
+{3\omega}+4\|u\|_{\infty}^2\|D^{-1/2}\|_{\infty}^2+\frac{3(1-\omega)^2\rho_s^2 \kappa_2^2}{\omega} \bigg)
    \int_0^T\inf_{{\hat{C_h}}\in X^h\atop {\hat{C_h}}|_{\Gamma_{\text{in}}} = g_h}\|C-{\hat{C_h}}\|_1^2 dt
.
\notag
\end{align}
Let boundary term $g_h$ be the interpolant of $g$ in $X_{\Gamma_{\text{in}}}^h$.
Then 
Lemma \ref{interpolant} 
finally implies \eqref{ce11nl}.
\end{proof}
\subsubsection{Existence of the solution to the  fully discrete 
system}
In this subsection, we prove the solvability of the fully discrete system \eqref{nempcn}, approximating \eqref{Eq} for the nonlinear explicit isotherm.
By adding and subtracting $\hat{C}_h$ and multiplying by $2$, we 
rewrite 
\eqref{nempcn}
as follows:
Given ${C^{n}_h}-\hat{C}_h\in X_{0,\Gamma_{\text{in}}}^h$, find ${C^{n+1}_h}-\hat{C}_h\in X_{0,\Gamma_{\text{in}}}^h$ such that
\begin{align}
&
(D\nabla ({C^{n+1}_h}-\hat{C}_h),\nabla v_h )\notag
\\&
 =
 - 2\bigg( \bigg( \omega+(1-\omega)\rho_sq'(\frac{C_h^{n+1}-\hat{C}_h+C_h^n+\hat{C}_h}{2})\bigg)\frac{(C^{n+1}_h-\hat{C}_h)-(C^n_h-\hat{C}_h)}{\Delta t},v_h \bigg)
\notag
\\
\label{eu}
&
+ 2( f^{n+1/2},v_h)-(\bu \cdot \nabla ({C^{n+1}_h-\hat{C}_h+C^{n}_h}+\hat{C}_h ) , v_h)
+ (\nabla\cdot {(D\nabla(C^n_h+\hat{C}_h))},v_h),
\end{align}
for all $v_h\in X_{0,\Gamma_{\text{in}}}^h$.
To simplify the presentation, 
we drop the subscript $h$ throughout this section. 
We note that by the Lax-Milgram theorem \cite[corollary 5.8] {br}
we have that
$$\forall \ l\in X^{*},\ \text{there exists an unique solution}\ \Psi\in X_{0,\Gamma_{\text{in}}}\ \text{of}\ (D\grad\Psi,\grad v)=(l,v),\ \forall v\in X_{0,\Gamma_{\text{in}}},
$$
and therefore, the operator $T: X^{*}\rightarrow X_{0,\Gamma_{\text{in}}}$ defined by $T(l)=\Psi$ is a well-defined linear and continuous operator
:
$$\|T\|=\sup_{l\in X^{*}}\frac{\|T(l)\|_{X_{0,\Gamma_{\text{in}}}}}{\|l\|_{*}}=\sup_{l\in X^{*}}\frac{\|\grad \Psi\|}{\|l\|_{*}}\leq \frac{1}{\lambda},\ \text{since}\ \|\grad \Psi\|\leq \frac{1}{\lambda}\|l\|_{*}.$$
Next, we define the nonlinear operator 
$N: X_{0,\Gamma_{\text{in}}}\rightarrow X^{*}$ 
by
\begin{align*}
    N(\psi)&=2f^{n+\frac{1}{2}}-2\bigg(\omega+(1-\omega)\rho_s q'\bigg(\frac{\psi+C^n+\hat{C}}{2}\bigg)\bigg)\frac{\psi-(C^n-\hat{C})}{\Delta t}-\bu \cdot \nabla (\psi+C^n+\hat{C})  \\&+\div{(D\nabla(C^n+\hat{C}))},
\end{align*}
and the operator $\mathcal{F}: X_{0,\Gamma_{\text{in}}}\rightarrow X_{0,\Gamma_{\text{in}}}$ by $\mathcal{F}=T(N(\psi))$.
To prove the solvability of the problem \eqref{eu}, it suffices to show that 
$\mathcal{F}$ has a fixed point, i.e., 
there exists 
$\psi=\mathcal{F}(\psi) \in X_{0,\Gamma_{\text{in}}}$.
\begin{lemma}\label{eul1}
$N: X_{0,\Gamma_{\text{in}}}\rightarrow X^{*}$ is a bounded operator
\begin{align*}
\|N(\psi)\|_*
&
\leq 
\|\bu\|_{\infty}\|\grad{\psi}\| + 
\frac{2\omega+2(1-\omega)\kappa_2}{\Delta t}\|\psi\|
+ \frac{2\omega+2(1-\omega)\kappa_2}{\Delta t}\|C^n-\hat{C}\|+\|2f^{n+\frac{1}{2}}\|_*
\\
& 
\quad
+\|\div{(D\nabla (C^n+\hat{C}))}\|_* 
+ \|\bu\|_{\infty}\|\nabla({C^n+\hat{C}})\|.
\end{align*}
\end{lemma}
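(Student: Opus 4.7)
My approach is to bound $\|N(\psi)\|_*$ by splitting $N(\psi)$ into its individual summands, applying the triangle inequality for the dual norm, and then estimating each term using the Cauchy--Schwarz inequality together with the pointwise bound on $q'$ supplied by assumption \ref{F6}. Concretely, I would write
\begin{align*}
N(\psi) &= 2f^{n+\frac{1}{2}} \;-\; \tfrac{2}{\Delta t}\bigl(\omega+(1-\omega)\rho_s q'(\tfrac{\psi+C^n+\hat{C}}{2})\bigr)\psi \\
&\quad +\; \tfrac{2}{\Delta t}\bigl(\omega+(1-\omega)\rho_s q'(\tfrac{\psi+C^n+\hat{C}}{2})\bigr)(C^n-\hat{C}) \\
&\quad -\; \bu\cdot\nabla\psi \;-\; \bu\cdot\nabla(C^n+\hat{C}) \;+\; \div(D\nabla(C^n+\hat{C})),
\end{align*}
so that the triangle inequality gives $\|N(\psi)\|_*$ as a sum of the six corresponding dual norms, matching the six terms on the right-hand side of the claimed bound in one-to-one fashion.

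For $2f^{n+\frac{1}{2}}$ and $\div(D\nabla(C^n+\hat{C}))$ no estimate is required: their dual norms appear literally on the right-hand side by definition of $\|\cdot\|_*$. For each remaining term I would test against an arbitrary $v\in X_{0,\Gamma_{\text{in}}}$ and divide by $\|\grad v\|$. For the two time-difference terms, Cauchy--Schwarz in $L^2(\Omega)$ together with assumption \ref{F6}, $q'(\cdot)\leq\kappa_2$, lets me move the nonlinear coefficient outside to give
\begin{align*}
\Bigl(\tfrac{2}{\Delta t}\bigl(\omega+(1-\omega)\rho_s q'(\cdot)\bigr)\psi,v\Bigr) \leq \tfrac{2\omega+2(1-\omega)\kappa_2}{\Delta t}\|\psi\|\,\|v\|,
\end{align*}
and the analogous estimate for the $(C^n-\hat{C})$ piece. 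For the two advection contributions, a direct bound $(\bu\cdot\nabla w,v)\leq\|\bu\|_{\infty}\|\nabla w\|\,\|v\|$ with $w=\psi$ and then $w=C^n+\hat{C}$ yields the $\|\bu\|_\infty\|\grad\psi\|$ and $\|\bu\|_\infty\|\nabla(C^n+\hat{C})\|$ contributions. In each of these four estimates the factor $\|v\|/\|\grad v\|$ is controlled by the Poincar\'e inequality in Lemma \ref{poin2}, which allows me to pass from the $L^2$ pairing to the dual norm on $X_{0,\Gamma_{\text{in}}}$.

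The only nontrivial step is treating the nonlinear factor $q'\bigl((\psi+C^n+\hat{C})/2\bigr)$: because it depends on $\psi$ it cannot be pulled out as a constant, and one must rely on the uniform pointwise bound $q'\leq\kappa_2$ from \ref{F6} to move it outside the integral before applying Cauchy--Schwarz. Once this is done, the entire proof collapses to a sum of elementary $L^2$ estimates, and the claimed bound follows immediately by collecting the six contributions.
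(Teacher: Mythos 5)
Your proposal is correct and matches the paper's intent exactly: the paper gives no detailed proof, only a remark stating that the lemma follows from assumption \ref{F6}, the triangle inequality, and the Cauchy--Schwarz inequality, which is precisely the decomposition and term-by-term estimation you carry out (including the correct observation that the $\psi$-dependent coefficient $q'$ must be controlled pointwise by $\kappa_2$ before Cauchy--Schwarz, and that Lemma \ref{poin2} converts the $L^2$ pairings into dual-norm bounds).
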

\begin{remark}
The proof of \Cref{eul1} follows from 
%
\ref{F6}, 
the triangle and Cauchy-Schwarz inequalities.
\end{remark}
\begin{lemma}\label{eul}
$N: X_{0,\Gamma_{\text{in}}}\rightarrow X^*$ is a continuous operator.
\end{lemma}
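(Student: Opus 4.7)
The plan is to establish continuity of $N$ by fixing two arguments $\psi_1,\psi_2 \in X_{0,\Gamma_{\text{in}}}$ and bounding $\|N(\psi_1)-N(\psi_2)\|_*$ by a constant multiple of $\|\grad(\psi_1-\psi_2)\|$, with the constant depending on (but locally bounded in) the arguments. The forcing term $2f^{n+1/2}$ and the term $\div(D\grad(C^n+\hat{C}))$ are independent of $\psi$, so they drop out of the difference and only three contributions remain: the time‐derivative term containing $q'$, the linear convective piece $-\bu\cdot\grad\psi$, and the purely linear $\omega$-piece that has already been absorbed into the $q'$ term.

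First I would dispatch the easy, linear pieces. By \ref{F2} and the definition of the dual norm,
\begin{align*}
|(\bu\cdot\grad(\psi_1-\psi_2),v)|\le \|\bu\|_\infty\|\grad(\psi_1-\psi_2)\|\,\|v\| \le K_{\rm PF}\|\bu\|_\infty\|\grad(\psi_1-\psi_2)\|\|\grad v\|,
\end{align*}
so this contribution is controlled by $K_{\rm PF}\|\bu\|_\infty\|\grad(\psi_1-\psi_2)\|$ in $X^*$, using \Cref{poin2}. The constant $\omega$ part of the $q'$ factor gives a term $\tfrac{2\omega}{\Delta t}(\psi_1-\psi_2)$, whose dual norm is bounded by $\tfrac{2\omega K_{\rm PF}^2}{\Delta t}\|\grad(\psi_1-\psi_2)\|$ by the Cauchy-Schwarz and Poincaré inequalities.

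The heart of the proof is the nonlinear piece. Setting $A_i=(\psi_i+C^n+\hat C)/2$ and $w_i=\psi_i-(C^n-\hat C)$, I split
\begin{align*}
q'(A_1)w_1-q'(A_2)w_2 = q'(A_1)(w_1-w_2) + (q'(A_1)-q'(A_2))\,w_2.
\end{align*}
The first summand is handled by the pointwise bound $|q'(A_1)|\le\kappa_2$ from \ref{F6}, which together with Cauchy–Schwarz and \Cref{poin2} gives an $X^*$-bound of $\kappa_2 K_{\rm PF}^2\|\grad(\psi_1-\psi_2)\|$. For the second summand, I use \ref{F7}: since $q''$ is bounded, $q'$ is globally Lipschitz with some constant $L$, hence $|q'(A_1)-q'(A_2)|\le \tfrac{L}{2}|\psi_1-\psi_2|$ pointwise. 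Then for any $v\in X_{0,\Gamma_{\text{in}}}$,
\begin{align*}
|((q'(A_1)-q'(A_2))w_2,v)| \le \tfrac{L}{2}\,\|w_2\|_\infty\,\|\psi_1-\psi_2\|\,\|v\|,
\end{align*}
and the $L^\infty$-norm of $w_2=\psi_2-(C^n-\hat C)$ is finite because $X_{0,\Gamma_{\text{in}}}^h$ is finite dimensional (so an inverse estimate bounds $\|w_2\|_\infty$ by $C(h)\|\grad w_2\|$), and because $C^n,\hat C$ are fixed data.

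The main obstacle is precisely this last bound: one needs to extract an $L^\infty$ factor from $w_2$ in order to multiply it against the pointwise Lipschitz estimate for $q'$. In the continuous setting this would require an extra regularity assumption, but here the fully discrete framework supplies the finite dimensionality of $X_{0,\Gamma_{\text{in}}}^h$, which together with the standard inverse inequality furnishes the needed $L^\infty$ control. Combining the three bounds above and inserting the factor $\tfrac{2(1-\omega)\rho_s}{\Delta t}$ yields a local Lipschitz estimate
\begin{align*}
\|N(\psi_1)-N(\psi_2)\|_* \le K(\psi_2,C^n,\hat C,\Delta t,h)\,\|\grad(\psi_1-\psi_2)\|,
\end{align*}
which in particular implies that $N$ is continuous on $X_{0,\Gamma_{\text{in}}}$.
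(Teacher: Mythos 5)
Your proposal is correct and follows essentially the same route as the paper's proof: the same decomposition of $N(\psi_1)-N(\psi_2)$ into the linear $\omega$-term, the convective term, and the nonlinear $q'$-term, the same splitting $q'(A_1)w_1-q'(A_2)w_2=q'(A_1)(w_1-w_2)+(q'(A_1)-q'(A_2))w_2$ using \ref{F6} and the Lipschitz continuity of $q'$, and the same conclusion of a local Lipschitz bound in $\|\grad(\psi_1-\psi_2)\|$. The only (welcome) refinement is that you justify the product estimate for $(q'(A_1)-q'(A_2))w_2$ explicitly via an inverse inequality in the finite-dimensional space to control $\|w_2\|_\infty$, a point the paper's bound $K K_{\text{PF}}\|\grad(\psi_2-\psi_1)\|\,\|\psi_1-(C^n-\hat{C})\|$ passes over without comment.
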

\begin{proof}
\textcolor{blue}{
It suffices to show that $\|N(\psi_1)-N(\psi_2)\|_*\rightarrow 0$ if $\|\grad(\psi_1-\psi_2)\|
\rightarrow 0$ because $\|\psi\|_{X_{0,\Gamma_{\text{in}}}}$ is equivalent to $\|\grad \psi\|$. Here, 
\begin{align*}
   & \|N(\psi_1)-N(\psi_2)\|_*
    \\&\leq \frac{2\omega}{\Delta t}\|\psi_2-\psi_1\|_*+\|\bu\cdot \grad(\psi_2-\psi_1)\|_*
    \\&+\frac{2(1-\omega)\rho_s }{\Delta t}\|q'(\frac{\psi_2+C^n+\hat{C}}{2}){(\psi_2-(C^n-\hat{C})})-q'(\frac{\psi_1+C^n+\hat{C}}{2})({\psi_1-(C^n-\hat{C})})\|_*.
    \end{align*}
Notice, $\frac{2\omega}{\Delta t}\|\psi_2-\psi_1\|_*\leq \frac{2\omega K_{\text{PF}}}{\Delta t}\|\grad(\psi_2-\psi_1)\|$ and 
    $\|\bu\cdot \grad(\psi_2-\psi_1)\|_*\leq \|\bu\|_{\infty}\|\grad(\psi_2-\psi_1)\|.$\\
Next, $$\|q'(\frac{\psi_2+C^n+\hat{C}}{2})(\psi_2-\psi_1)\|_*\leq \kappa_2K_{\text{PF}}\|\grad(\psi_2-\psi_1)\|.$$
Using Lipschitz continuity of $q'$ we get, 
\begin{align*}
 &\|\bigg(q'(\frac{\psi_2+C^n+\hat{C}}{2})-q'(\frac{\psi_1+C^n+\hat{C}}{2})\bigg)(\psi_1-(C^n-\hat{C}))\|_*
 \\&\leq KK_{\text{PF}}\|\grad(\psi_2-\psi_1)\|\|\psi_1-(C^n-\hat{C})\|.   
\end{align*}
Hence, using Cauchy-Schwarz, Poincar\'e-Friedrichs inequalities and Lipschitz continuity of $q'$ we get,
\begin{align*}
& \|N(\psi_1)-N(\psi_2)\|_*
\\&\leq \bigg(\frac{2\omega K_{\text{PF}}}{\Delta t}+\|\bu\|_{\infty}+\frac{2(1-\omega)\rho_s \kappa_2K_{\text{PF}}}{\Delta t}+\frac{2(1-\omega)\rho_sKK_{\text{PF}} }{\Delta t}\|\psi_1-(C^n-\hat{C})\|\bigg) \\&\times\|\grad(\psi_2-\psi_1)\|
\end{align*}
which concludes the argument.}
\end{proof}
\begin{lemma}\label{eul3}
 $\mathcal{F}: X_{0,\Gamma_{\text{in}}}\rightarrow X_{0,\Gamma_{\text{in}}}$ is a compact map.
\end{lemma}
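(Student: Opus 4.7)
The plan is to exploit the finite-dimensional nature of the discrete space, together with the boundedness and continuity already established for $N$ and $T$. Take an arbitrary bounded sequence $\{\psi_k\}\subset X_{0,\Gamma_{\text{in}}}$, so that $\sup_k\|\grad\psi_k\|<\infty$ and (by \Cref{poin2}) also $\sup_k\|\psi_k\|<\infty$. Applying the bound in \Cref{eul1}, the image sequence $\{N(\psi_k)\}$ is bounded in $X^{*}$, with constants depending only on $\|\bu\|_\infty$, $\kappa_2$, $\Delta t$, and the data $C^n$, $\hat C$, $f^{n+1/2}$. Since $T$ is linear and continuous with $\|T\|\leq 1/\lambda$, it follows that $\{\mathcal{F}(\psi_k)\}=\{T(N(\psi_k))\}$ is bounded in $X_{0,\Gamma_{\text{in}}}$.

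Next, recall that throughout this subsection the subscript $h$ has been suppressed, so $X_{0,\Gamma_{\text{in}}}$ is really the finite-dimensional finite-element space $X^h_{0,\Gamma_{\text{in}}}\subset H^1_{0,\Gamma_{\text{in}}}(\Omega)$. All norms on a finite-dimensional space are equivalent, and by Heine--Borel every bounded sequence has a convergent subsequence. Hence $\{\mathcal{F}(\psi_k)\}$ admits a subsequence converging in the $H^1$-norm, which shows that $\mathcal{F}$ maps bounded sets to relatively compact sets. Continuity of $\mathcal{F}$ follows by composing the continuity of $N$ (\Cref{eul}) with the continuity of the bounded linear operator $T$. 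Together, these facts establish that $\mathcal{F}$ is a compact map, as required.

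The main ``obstacle'' is really only notational: one must recognize that the finite-dimensional setting renders compactness essentially automatic once the a~priori bounds of \Cref{eul1} are in hand, and one does not need elliptic regularity or Rellich--Kondrachov for $T$. Had we instead worked directly in the infinite-dimensional space $H^1_{0,\Gamma_{\text{in}}}(\Omega)$, the argument would have required showing that $T$ gains regularity (e.g.\ maps $X^{*}$ into a space compactly embedded in $H^1$), which here is replaced by the trivial compactness of closed and bounded subsets of $X^h_{0,\Gamma_{\text{in}}}$. This compactness, combined with continuity, is precisely what is needed so that in the next step one can invoke Schauder's fixed-point theorem to obtain existence of a solution to \eqref{eu}.
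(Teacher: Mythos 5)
Your proof is correct, but it takes a genuinely different route from the paper. The paper establishes compactness of $\mathcal{F}=T\circ N$ by a dimension-independent argument: it invokes the Rellich--Kondrachov theorem to get the compact embedding $I:X_{0,\Gamma_{\text{in}}}\hookrightarrow L^2(\Omega)$, factors $N$ through this embedding so that $N\circ I$ is compact, and then composes with the bounded linear operator $T$. You instead observe that, since the subscript $h$ has merely been suppressed, $X_{0,\Gamma_{\text{in}}}$ is the finite-dimensional space $X^h_{0,\Gamma_{\text{in}}}$, so once \Cref{eul1} gives boundedness of $N$ (hence of $\mathcal{F}$, via $\|T\|\le 1/\lambda$) and \Cref{eul} gives continuity, compactness follows from Heine--Borel. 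Both arguments are valid in the setting at hand. What your approach buys is economy: it needs no Sobolev embedding and sidesteps the question of whether $N$ genuinely factors through $L^2$ (the advection term $\bu\cdot\nabla\psi$ sees $\nabla\psi$, so the factorization in the paper's diagram requires an integration by parts using $\div\bu=0$ to be taken literally). What the paper's approach buys is that the proof survives unchanged if one wants the analogous existence statement at the semi-discrete or continuous level, where finite-dimensionality is unavailable. One small correction: the paper closes the existence argument with the Leray--Schauder fixed-point theorem (requiring the $\alpha$-uniform a priori bound), not Schauder's theorem as you state at the end; this does not affect the compactness lemma itself.
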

\begin{proof}
Since $T: X^{*}\rightarrow X_{0,\Gamma_{\text{in}}}$ is a bounded linear operator, 
we only 
need to show that $N: X_{0,\Gamma_{\text{in}}}\rightarrow X^{*}$ is a compact map. 
By Lemmas \ref{eul1}-\ref{eul} we have that $N: X_{0,\Gamma_{\text{in}}}\rightarrow X^*$ is a bounded and continuous operator respectively, 
and the 
Rellich-Kondrachov 
theorem \cite[page 272]{evans2009partial} 
provides the compact embedding 
\textcolor{blue}{
$I: X_{0,\Gamma_{\text{in}}}\hookrightarrow L^2$ defined by $I(\psi)=\psi$.
Therefore $N\circ I: X_{0,\Gamma_{\text{in}}}\hookrightarrow  L^2\rightarrow  X^{*}$ is compact.} 
%
\[
\begin{tikzcd}
  \psi \in X_{0,\Gamma_{in}} \arrow[r, hookrightarrow,"I"] \arrow[drr,"\mathcal{F}"'] &  L^2(\Omega) \arrow[r] &N(\psi)\in X^* \arrow[d,"T"] \\[2em]
  & & X_{0,\Gamma_{in}}
\end{tikzcd}  
\]
\end{proof}
%
%
\begin{theorem}
For any $v\in X_{0,\Gamma_{\text{in}}}$ and $f\in X^{*}$, 
there exists 
$\psi=C^{n+1}-\hat{C}\in X_{0,\Gamma_{\text{in}}}$ 
 solution to 
\cref{eu}. 
\end{theorem}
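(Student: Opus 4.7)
The plan is to invoke Schaefer's fixed point theorem (the Leray--Schauder alternative): a continuous, compact self-map $\mathcal{F}$ of a Banach space admits a fixed point whenever the set $\{\psi : \psi = \lambda \mathcal{F}(\psi)\text{ for some }\lambda\in[0,1]\}$ is bounded. Continuity of $\mathcal{F} = T\circ N$ follows by composing the continuity of $N$ established in \Cref{eul} with the boundedness (hence continuity) of the linear operator $T$, while compactness is exactly the content of \Cref{eul3}. What remains is therefore the uniform a priori bound on solutions of $\psi=\lambda\mathcal{F}(\psi)$ as $\lambda$ ranges over $[0,1]$.

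To obtain it, I would fix such a $\psi$, rewrite $\psi=\lambda T(N(\psi))$ as the variational identity $(D\grad\psi,\grad v) = \lambda\,(N(\psi),v)$ for all $v\in X_{0,\Gamma_{\text{in}}}$, and test with $v=\psi$. The left-hand side is coercive, $(D\grad\psi,\grad\psi)\geq \lambda\|\grad\psi\|^2$, by \ref{F3}. On the right, expanding $(N(\psi),\psi)$ produces two groups of terms with favorable sign that can be discarded outright: the discrete time-derivative contributions $-\tfrac{2\omega}{\Delta t}\|\psi\|^2 - \tfrac{2(1-\omega)\rho_s}{\Delta t}\int_\Omega q'\bigl(\tfrac{\psi+C^n+\hat{C}}{2}\bigr)\psi^2\,d\Omega$, nonpositive by \ref{F5}; and the advective self-term $-(\bu\cdot\grad\psi,\psi)$, which after integration by parts using $\div\bu=0$ equals $-\tfrac{1}{2}\int_{\Gamma_{\text{out}}}(\bu\cdot\overrightarrow{n})\psi^2\,ds \leq 0$, since $\psi$ vanishes on $\Gamma_{\text{in}}$ and $\bu\cdot\overrightarrow{n}=0$ on $\Gamma_{\text{n}}$. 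The remaining terms couple $\psi$ only to step-$n$ data, namely $f^{n+1/2}$, $C^n-\hat{C}$, $\grad(C^n+\hat{C})$, and $\div(D\grad(C^n+\hat{C}))$, with the $q'$-weighted contribution of $C^n-\hat{C}$ controlled via $|q'|\leq \kappa_2$ from \ref{F6}. Cauchy--Schwarz, \Cref{poin2} to pass from $\|\psi\|$ to $\Tilde{K}_{PF}\|\grad\psi\|$, and Young's inequality with a small parameter absorb a fraction of $\|\grad\psi\|^2$ into the left-hand side and leave an inequality of the form $\tfrac{\lambda}{2}\|\grad\psi\|^2 \leq \lambda\,C_\star \leq C_\star$, with $C_\star$ depending only on $\omega$, $\rho_s$, $\kappa_2$, $\lambda$, $\Delta t$, $\Tilde{K}_{PF}$, $\|\bu\|_\infty$ and the fixed step-$n$ data---uniformly in the homotopy parameter.

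I expect the main subtlety to lie in handling the $q'$-term of the time derivative: because $q'$ is evaluated at $\tfrac{\psi+C^n+\hat{C}}{2}$, which itself depends on the unknown $\psi$, a polarization identity is not available and only the one-sided lower bound $q'\geq \kappa_1 > 0$ from \ref{F5} can be invoked to ensure nonpositivity, so the term must be discarded by its sign alone rather than exploited quantitatively. Once this is accepted, the verification of the Leray--Schauder hypothesis is complete, Schaefer's theorem applied to $\mathcal{F}$ yields a fixed point $\psi\in X_{0,\Gamma_{\text{in}}}$, and $C^{n+1}:=\psi+\hat{C}$ is the desired solution of \eqref{eu}.
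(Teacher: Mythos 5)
Your proposal is correct and follows essentially the same route as the paper: Leray--Schauder applied to $\mathcal{F}=T\circ N$ with continuity and compactness from \Cref{eul} and \Cref{eul3}, and the uniform a priori bound obtained by testing with $\psi$, discarding the sign-definite advective and time-derivative self-terms (using $q'\geq 0$ and the outflow boundary sign), and estimating the data terms via Cauchy--Schwarz, Poincar\'e--Friedrichs, and \ref{F6}. The only cosmetic issue is your reuse of $\lambda$ for both the homotopy parameter and the minimum eigenvalue of $D$ (the paper uses $\alpha$ for the former), and the paper simply divides through by $\|\grad\psi_\alpha\|$ rather than invoking Young's inequality, which is an equivalent bookkeeping choice.
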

\begin{proof}
Consider $\psi_{\alpha}=\alpha \mathcal{F}(\psi_{\alpha})$ in $X_{0,\Gamma_{\text{in}}}$, 
$0\leq \alpha\leq 1$ 
defined by
\begin{align*}
\psi_{\alpha}
&
= T \bigg( 2\alpha f^{n+\frac{1}{2}}-2\alpha\bigg(\omega+(1-\omega)\rho_s q'(\frac{\psi+C^n+\hat{C}}{2})\bigg)\frac{\psi-(C^n-\hat{C})}{\Delta t}
\\
& \qquad 
- \alpha\bu\cdot \nabla ( \psi+C^n+\hat{C})+\alpha\div{(D\nabla(C^n+\hat{C}))} \bigg),
\end{align*}
which holds if and only if for all $v\in X_{0,\Gamma_{\text{in}}}$ $\psi_{\alpha}\in X_{0,\Gamma_{\text{in}}}$ satisfies
\begin{align}
(D\grad\psi_{\alpha},\grad{v})
& 
= 
-2\alpha \bigg( \bigg(\omega+(1-\omega)\rho_sq'(\frac{\psi_{\alpha}+C^n+\hat{C}}{2})\bigg)\frac{\psi_{\alpha}-(C^n-\hat{C})}{\Delta t},v \bigg)
\notag
\\
&
\qquad
+ 2( \alpha f^{n+1/2},v)-(\alpha \bu \cdot \grad({\psi_{\alpha}+C^{n}}+\hat{C}),v)-(\alpha{D\grad(C^n+\hat{C})},\grad v).
\notag
\end{align}
Then, by the Leray-Schauder fixed-point theorem 
\cite{MR433481,MR1814364}, we only need to prove 
an
a priori bound on $\|\grad \psi_{\alpha}\|$, independent of $\alpha$. 
This follows by s
etting $v=\psi_{\alpha}$ 
and using 
the
Cauchy-Schwarz and Poincar\'e-Friedrichs inequalities
\begin{align*}
\|\grad\psi_{\alpha}\|
&
\leq
\frac{2 K_{\text{PF}}}{\lambda}\|f^{n+1/2}\|+\frac{K_{\text{PF}}}{{\lambda}}\|\bu\cdot \nabla(C^n+\hat{C})\|+\frac{\beta_1}{\lambda}\|\nabla(C^n+\hat{C})\|
\\
&
\quad
+ \frac{2 K_{\text{PF}}(\omega+(1-\omega)\rho_s\kappa_2)}{{\lambda}\Delta t}\|C^n-\hat{C}\|
, \ \text{for $0\leq\alpha \leq 1$.}
\end{align*}
\end{proof}

\subsubsection{Stability of the solution to the  fully discrete 
system}
%
In this subsection, we derive an energy-like bound for \eqref{mp1}-\eqref{mp2},
the fully discrete version of the adsorption equation \eqref{Eq} for 
a
nonlinear, explicit isotherm, using the midpoint method for the time discretization.
We recall that at the continuous level, we proved that the solution $C>0$ is positive, and it is bounded by the initial and boundary conditions. 
Nevertheless, positivity at the discrete level and a discrete Maximum Principle are 
hard to 
obtain 
and usually hold under a CFL condition, i.e., the timestep has to be $\mathcal{O}(h^2)$ \cite{thomee2008existence
}. 
%
We 
use the following notations:
$Q_h(\alpha)=\int_{0}^{\alpha}\mathcal{P}( q(s)) ds$,
\begin{align*}
\mathcal{E}_h^n
=
& 
\frac{3\Delta t\omega}{4}\sum_{n=0}^{N}\bigg\|D^{1/2}\grad C_h^{n+1/2}-\frac{8}{3}D^{-1/2}{\hat{C_h}}\bu\bigg\|^2
\\
&
+\frac{3\Delta t\omega}{4}\sum_{n=0}^{N}\bigg\|D^{1/2}\grad C_h^{n+1/2}-\frac{8\rho_s \mathcal{P}(q(\hat{C_h}))(1-\omega)}{3\omega}D^{-1/2}\bu\bigg\|^2
\\
&
+\frac{3\Delta t\omega}{4}\sum_{n=0}^{N}\bigg\|D^{1/2}\grad C_h^{n+1/2}-\frac{8}{3}D^{-1/2}\grad \hat{C_h}\bigg\|^2
\\
&
+\frac{3\Delta t\omega}{4}\sum_{n=0}^{N}\bigg\|D^{1/2}\grad C_h^{n+1/2}-\frac{8(1-\omega)\rho_s \mathcal{P}^1(q'(\hat{C_h}))}{3\omega}D^{-1/2}\grad \hat{C_h}\bigg\|^2
\\
&
+ \|\omega C_h^{N+1}+(1-\omega)\rho_s q(C_h^{N+1})-2(\omega \hat{C_h}+(1-\omega)\rho_s \mathcal{P}(q(\hat{C_h})))\|^2,
\end{align*}
and
\begin{align*}
\mathcal{B}_h^n
=
& 
\frac{3N\Delta t\omega}{4}\bigg\|\frac{8}{3}D^{-1/2}{\hat{C_h}}\bu\bigg\|^2
+\frac{3N\Delta t\omega}{4}\bigg\|\frac{8\rho_s \mathcal{P}(q(\hat{C_h}))(1-\omega)}{3\omega}D^{-1/2}\bu\bigg\|^2
\\
&
+
\frac{3N\Delta t\omega}{4}\bigg\|\frac{8}{3}D^{-1/2}\grad \hat{C_h}\bigg\|^2
+\frac{3N\Delta t\omega}{4}\bigg\|\frac{8(1-\omega)\rho_s \mathcal{P}^1(q'(\hat{C_h}))}{3\omega}D^{-1/2}\grad \hat{C_h}\bigg\|^2
\\
&
+ \|\omega C_h^0+(1-\omega)\rho_s q(C_h^0)-2(\omega \hat{C_h}+(1-\omega)\rho_s \mathcal{P}(q(\hat{C_h})))\|^2.
\end{align*}
\begin{theorem}\label{thm:th4new}
Suppose the assumptions \ref{F1}-\ref{F7} 
hold, and
the fully discrete 
problem
\eqref{mp1}-\eqref{mp2} 
has a 
solution $\{C_h^n\}_{n=0}^N\in L^2(0,T;H^1(\Omega))$.
Then 
we have the following stability result
\begin{align*}
&
\|\omega C_h^{N+1}+(1-\omega)\rho_s \mathcal{P}(q(C_h^{N+1})\|^2+{\Delta t\omega}\sum_{n=0}^N \|\grad C_h^{n+1/2}\|^2
\\
& \quad
+{2\Delta t\omega}\sum_{n=0}^N\int_{\Gamma_{\text{out}}}   (C_h^{n+1/2})^2(\bu\cdot \overrightarrow{n})ds+ 4(1-\omega)\Delta t\rho_s\sum_{n=0}^N \int_{\Gamma_{\text{out}}}   Q_h(C_h^{n+1/2})(\bu\cdot \overrightarrow{n})ds
\\
&
\quad
+ 4(1-\omega)\Delta t\rho_s\sum_{n=0}^N\int_{\Omega}\mathcal{P}^1(q'(C_h^{n+1/2}))(D^{1/2}\grad C_h^{n+1/2})^2\ d\Omega+\frac{1}{4}\mathcal{E}_h^n
\\
&
=
\|\omega C_h^0+(1-\omega)\rho_s \mathcal{P}(q(C_h^0)\|^2+\mathcal{B}_h^n
\\
&
\quad
+ 4\Delta t\sum_{n=0}^N( f,\omega C_h^{n+1/2}+(1-\omega)\rho_s \mathcal{P}(q(C_h^{n+1/2}))-(\omega \hat{C_h}+(1-\omega)\rho_s\mathcal{P}( q(\hat{C_h}))))
\\
&
\quad
-{2N\Delta t\omega}\int_{\Gamma_{\text{in}}}   g_h^2(\bu\cdot \overrightarrow{n})ds-4(1-\omega)N\Delta t\rho_s \int_{\Gamma_{\text{in}}}   Q_h(g_h)(\bu\cdot \overrightarrow{n})ds.
\end{align*}
\end{theorem}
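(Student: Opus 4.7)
The plan is to imitate the continuous and semi-discrete arguments of \Cref{thm:neth2neweq} and \Cref{neth2neweqh}, replacing time derivatives by midpoint differences and time integrals by $\Delta t$-weighted sums. First, I would add the two substeps \eqref{mp1} and \eqref{mp2} of the refactorised midpoint scheme: their sum recovers the single-step midpoint form \eqref{nempc} on $[t_n,t_{n+1}]$, so that the remainder of the analysis can be carried out on one equation centred at $t^{n+1/2}$.

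Next, I would test \eqref{nempc} against the discrete analogue of the semi-discrete test function used in \Cref{neth2neweqh},
\begin{equation*}
v_h^n \;=\; \bigl(\omega C_h^{n+1/2}+(1-\omega)\rho_s\mathcal{P}(q(C_h^{n+1/2}))\bigr)-\bigl(\omega\hat{C}_h+(1-\omega)\rho_s\mathcal{P}(q(\hat{C}_h))\bigr)\in X^h_{0,\Gamma_{\text{in}}}.
\end{equation*}
The spatial terms are then handled essentially verbatim as in that proof: the divergence theorem combined with incompressibility of $\bu$ and the boundary conditions $\bu\cdot\overrightarrow{n}=0$ on $\Gamma_{\text{n}}$ and $C_h^{n+1/2}=g_h$ on $\Gamma_{\text{in}}$ produces the boundary integrals of $(C_h^{n+1/2})^2$ and $Q_h(C_h^{n+1/2})$ on $\Gamma_{\text{out}}$ (together with the $g_h^2,Q_h(g_h)$ integrals on $\Gamma_{\text{in}}$ that are moved to the right-hand side), while the diffusion term contributes $\omega\|D^{1/2}\grad C_h^{n+1/2}\|^2$ and the positive quantity $(1-\omega)\rho_s\int_\Omega\mathcal{P}^1(q'(C_h^{n+1/2}))(D^{1/2}\grad C_h^{n+1/2})^2\,d\Omega$. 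All cross-terms containing $\hat{C}_h$ are expanded by the polarised identity $(a,b)=\tfrac12(\|a\|^2+\|b\|^2-\|a-b\|^2)$, producing exactly the squared-norm contributions that define $\mathcal{E}_h^n$ and $\mathcal{B}_h^n$.

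For the time-difference contribution, set $A^n:=\omega C_h^n+(1-\omega)\rho_s\mathcal{P}(q(C_h^n))$ and $B:=\omega\hat{C}_h+(1-\omega)\rho_s\mathcal{P}(q(\hat{C}_h))$. Since $v_h^n\in X^h$ and $\mathcal{P}$ is the $L^2$-projection, the pairing of the time difference with $v_h^n$ collapses to $((A^{n+1}-A^n)/\Delta t,\,v_h^n)$; together with the algebraic identity $(A^{n+1}-A^n,\,A^{n+1}+A^n-2B)=\|A^{n+1}-B\|^2-\|A^n-B\|^2$, this telescopes after multiplication by $\Delta t$ and summation from $n=0$ to $N$, leaving $\|A^{N+1}-B\|^2-\|A^0-B\|^2$. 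Expanding this around $2B$ by polarisation yields the $\|\omega C_h^{N+1}+(1-\omega)\rho_s\mathcal{P}(q(C_h^{N+1}))\|^2$ term on the left and the initial-mass term on the right, up to the boundary squared-norms absorbed into $\mathcal{E}_h^n$ and $\mathcal{B}_h^n$, which is what produces the $\tfrac14$ prefactor in the statement.

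The main obstacle is precisely this nonlinear time-difference pairing. Because $q$ is nonlinear, $(q(C_h^{n+1})-q(C_h^n))/\Delta t$ does not coincide with $\partial_t q(C_h^{n+1/2})$, so the clean midpoint energy identity must be applied to the composite quantity $A^n$ rather than separately to $C_h^n$ and $\mathcal{P}(q(C_h^n))$. The delicate bookkeeping is to keep the cross-pairs $\bigl((q(C_h^{n+1})-q(C_h^n))/\Delta t,\,\omega C_h^{n+1/2}\bigr)$ and $\bigl((C_h^{n+1}-C_h^n)/\Delta t,\,(1-\omega)\rho_s\mathcal{P}(q(C_h^{n+1/2}))\bigr)$ aligned so that they recombine into the telescoping inner product $(A^{n+1}-A^n,A^{n+1}+A^n-2B)$; this forces the precise coefficient choices inside $\mathcal{E}_h^n$ and $\mathcal{B}_h^n$. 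Once this alignment is observed, multiplying by $\Delta t$, summing from $0$ to $N$, and rearranging the $\hat{C}_h$ quadratic terms into $\mathcal{E}_h^n$ and $\mathcal{B}_h^n$ gives the claimed total mass balance.
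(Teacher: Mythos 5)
Your treatment of the spatial, boundary, and $\hat{C}_h$ cross terms follows the paper's route (it explicitly recycles the semi-discrete argument of \Cref{neth2neweqh}), but your handling of the time-difference term has a genuine gap, and it is exactly the step where the refactorization matters.

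You first add \eqref{mp1} and \eqref{mp2} to obtain the one-step form and then pair $(A^{n+1}-A^n)/\Delta t$ with $v_h^n=A^{n+1/2}-B$, where $A^n=\omega C_h^n+(1-\omega)\rho_s\mathcal{P}(q(C_h^n))$ and $A^{n+1/2}=\omega C_h^{n+1/2}+(1-\omega)\rho_s\mathcal{P}(q(C_h^{n+1/2}))$. The telescoping identity you invoke, $(A^{n+1}-A^n,\,A^{n+1}+A^n-2B)=\|A^{n+1}-B\|^2-\|A^n-B\|^2$, requires the test function to be $\tfrac12(A^{n+1}+A^n)-B$. For nonlinear $q$ one has $A^{n+1/2}-\tfrac12(A^{n+1}+A^n)=(1-\omega)\rho_s\,\mathcal{P}\!\bigl(q(C_h^{n+1/2})-\tfrac12(q(C_h^n)+q(C_h^{n+1}))\bigr)\neq 0$ in general, so your pairing leaves the uncancelled residual $\bigl(A^{n+1}-A^n,\,A^{n+1/2}-\tfrac12(A^{n+1}+A^n)\bigr)$, which does not appear in the stated identity. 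Saying that the cross-pairs ``recombine into the telescoping inner product'' asserts precisely the fact that needs proof; as written, the claimed \emph{equality} does not follow.

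The paper avoids this by \emph{not} summing the substeps first: it applies the polarization identity separately to the backward-Euler substep, giving $\tfrac12(\|A^{n+1/2}-B\|^2-\|A^n-B\|^2+\|A^{n+1/2}-A^n\|^2)$, and to the forward-Euler substep, giving $\tfrac12(\|A^{n+1}-B\|^2-\|A^{n+1/2}-B\|^2-\|A^{n+1}-A^{n+1/2}\|^2)$; adding these, the two squared-increment terms cancel because subtracting \eqref{mp1} from \eqref{mp2} shows the two substep increments coincide (both substeps share the same advection, diffusion, and forcing terms). Only then does the clean telescoping sum over $n$ emerge. To repair your argument you must either retain the two substeps and perform this cancellation, or explicitly prove (from the difference of the substeps) that $\mathcal{P}(q(C_h^{n+1/2}))=\tfrac12\bigl(\mathcal{P}(q(C_h^{n}))+\mathcal{P}(q(C_h^{n+1}))\bigr)$ modulo the relevant test space before invoking the telescoping identity; neither step is present in your proposal.
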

%
\begin{proof}
Let ${\hat{C_h}}\in X^h$  such that ${{\hat{C_h}} \Big{|}}_{\Gamma_{\text{in}}}=g_h$, 
and set
$$v_h=\omega C_h^{n+1/2}+(1-\omega)\mathcal{P}(q(C_h^{n+1/2}))-(\omega{\hat{C_h}+(1-\omega)\rho_s\mathcal{P}(q(\hat{C_h}})))\in  X_{0,\Gamma_{\text{in}}}^h(\Omega).
$$
Then, after plugging in $v_h$, we use the polarization identity to the first term of \eqref{mp1} and \eqref{mp2} and sum the resulting equations to  
yield
\begin{align}
\label{mp5n}
&
\frac{1}{2}(\|\omega C_h^{n+1} +(1-\omega)\rho_s\mathcal{P}(q(C_h^{n+1}))\|^2-\|\omega C_h^{n} +(1-\omega)\rho_s\mathcal{P}(q(C_h^{n}))\|^2
\\
&
\quad
- \|\omega C_h^{n+1/2} +(1-\omega)\rho_s\mathcal{P}((C_h^{n+1/2}))-(\omega C_h^{n+1} +(1-\omega)\rho_s\mathcal{P}(q(C_h^{n+1})))\|^2
\notag
\\
&
\quad
+\|\omega C_h^{n+1/2} +(1-\omega)\rho_s\mathcal{P}((C_h^{n+1/2}))-(\omega C_h^{n} +(1-\omega)\rho_s\mathcal{P}(q(C_h^{n})))\|^2)
\notag
\\
&
\quad
+ \Delta t( \bu\cdot \grad{C_h^{n+1/2}},\omega C_h^{n+1/2}+(1-\omega)\mathcal{P}(q(C_h^{n+1/2})))
\notag
\\
&
\quad
+ \Delta t( D\grad{C_h^{n+1/2}},\grad(\omega C_h^{n+1/2}+(1-\omega)\mathcal{P}(q(C_h^{n+1/2}))))
\notag
\\
&
=
\Delta t( f^{n+1/2},\omega C_h^{n+1/2}+(1-\omega)\mathcal{P}(q(C_h^{n+1/2}))-(\omega{\hat{C_h}+(1-\omega)\rho_s\mathcal{P}(q(\hat{C_h}}))))
\notag
\\
&
\quad
+ (\omega C_h^{n+1} +(1-\omega)\rho_sq(C_h^{n+1})- \omega C_h^{n}-(1-\omega)\rho_sq(C_h^{n}),(\omega{\hat{C_h}+(1-\omega)\rho_s\mathcal{P}(q(\hat{C_h}})))
\notag
\\
&
\quad
+\Delta t( \bu\cdot \grad{C_h^{n+1/2}},\omega{\hat{C_h}+(1-\omega)\rho_s\mathcal{P}(q(\hat{C_h}})))
\notag
+ \Delta t( D\grad{C_h^{n+1/2}},\grad((\omega{\hat{C_h}+(1-\omega)\rho_s\mathcal{P}(q(\hat{C_h}})))).
\notag
\end{align}
Since from 
\eqref{mp1}-\eqref{mp2} 
we also have
\begin{align*}
0 =     
&
-\|\omega C_h^{n+1/2} +(1-\omega)\rho_s\mathcal{P}((C_h^{n+1/2}))-(\omega C_h^{n+1} +(1-\omega)\rho_s\mathcal{P}(q(C_h^{n+1})))\|^2
\\
&
+\|\omega C_h^{n+1/2} +(1-\omega)\rho_s\mathcal{P}((C_h^{n+1/2}))-(\omega C_h^{n} +(1-\omega)\rho_s\mathcal{P}(q(C_h^{n})))\|^2
,
\end{align*}
then

\begin{align*}
\label{mp6n}
&
\frac{1}{2}(\|\omega C_h^{n+1} +(1-\omega)\rho_s\mathcal{P}(q(C_h^{n+1}))\|^2-\|\omega C_h^{n} +(1-\omega)\rho_s\mathcal{P}(q(C_h^{n}))\|^2)
\\
&
\quad
+ \Delta t( \bu\cdot \grad{C_h^{n+1/2}},\omega C_h^{n+1/2}+(1-\omega)\mathcal{P}(q(C_h^{n+1/2})))
\\
&
\quad
+ \Delta t( D\grad{C_h^{n+1/2}},\grad(\omega C_h^{n+1/2}+(1-\omega)\mathcal{P}(q(C_h^{n+1/2}))))
\\
&
=
\Delta t( f^{n+1/2},\omega C_h^{n+1/2}+(1-\omega)\mathcal{P}(q(C_h^{n+1/2}))-(\omega{\hat{C_h}+(1-\omega)\rho_s\mathcal{P}(q(\hat{C_h}}))))
\\
&
\quad
+(\omega C_h^{n+1} +(1-\omega)\rho_sq(C_h^{n+1})- \omega C_h^{n}-(1-\omega)\rho_sq(C_h^{n}),(\omega{\hat{C_h}+(1-\omega)\rho_s\mathcal{P}(q(\hat{C_h}})))
\\
&
\quad
+\Delta t( \bu\cdot \grad{C_h^{n+1/2}},(\omega{\hat{C_h}+(1-\omega)\rho_s\mathcal{P}(q(\hat{C_h}})))
\\
&
\quad
+ \Delta t( D\grad{C_h^{n+1/2}},\grad((\omega{\hat{C_h}+(1-\omega)\rho_s\mathcal{P}(q(\hat{C_h}})))).
\end{align*}
With a technique 
similar 
to the one used in the semidiscrete in space case in Theorem \ref{neth2neweqh} 
we get
\begin{align*}
&
\frac{1}{2}(\|\omega C_h^{n+1} +(1-\omega)\rho_s\mathcal{P}(q(C_h^{n+1}))\|^2-\|\omega C_h^{n} +(1-\omega)\rho_s\mathcal{P}(q(C_h^{n}))\|^2)+\frac{\omega \Delta t}{4}\|D^{1/2}\grad C_h^{n+1/2} \|^2 
\\
&
\quad
+(1-\omega)\Delta t\rho_s\int_{\Gamma_{\text{out}}}   Q_h(C_h^{n+1/2})(\bu\cdot \overrightarrow{n})ds+\Delta t(1-\omega)\rho_s\int_{\Omega}\mathcal{P}^1(q'(C_h^{n+1/2}))(D^{1/2}\grad C_h^{n+1/2})^2\, d\Omega
\\
&
\quad
+
\frac{\Delta t\omega}{2}\int_{\Gamma_{\text{out}}}   (C_h^{n+1/2})^2(\bu\cdot \overrightarrow{n})ds+\frac{3\Delta t\omega}{16}\bigg\|D^{1/2}\grad C_h^{n+1/2}-\frac{8}{3}D^{-1/2}{\hat{C_h}}\bu\bigg\|^2
\\
&
\quad
+\frac{3\Delta t\omega}{16}\bigg\|D^{1/2}\grad C_h^{n+1/2}-\frac{8\rho_s \mathcal{P}(q(\hat{C_h}))(1-\omega)}{3\omega}D^{-1/2}\bu\bigg\|^2
\\
&
\quad
+\frac{3\Delta t\omega}{16}\bigg\|D^{1/2}\grad C_h^{n+1/2}-\frac{8}{3}D^{-1/2}\grad \hat{C_h}\bigg\|^2
\\
&
\quad
+\frac{3\Delta t\omega}{16}\bigg\|D^{1/2}\grad C_h^{n+1/2}-\frac{8(1-\omega)\rho_s \mathcal{P}^1(q'(\hat{C_h}))}{3\omega}D^{-1/2}\grad \hat{C_h}\bigg\|^2
\\
&
=
\Delta t( f^{n+1/2},\omega C_h^{n+1/2}+(1-\omega)\rho_s \mathcal{P}(q(C_h^{n+1/2}))-(\omega \hat{C_h}+(1-\omega)\rho_s \mathcal{P}(q(\hat{C_h}))))
\\
&
\quad
-\frac{\Delta t\omega}{2}\int_{\Gamma_{\text{in}}}   g_h^2(\bu\cdot \overrightarrow{n})ds
+\frac{3\Delta t\omega}{16}\bigg\|\frac{8}{3}D^{-1/2}{\hat{C_h}}\bu\bigg\|^2
\\
&
\quad
+\frac{3\Delta t\omega}{16}\bigg\|\frac{8\rho_s \mathcal{P}(q(\hat{C_h}))(1-\omega)}{3\omega}D^{-1/2}\bu\bigg\|^2
+
\frac{3\Delta t\omega}{16}\bigg\|\frac{8}{3}D^{-1/2}\grad \hat{C_h}\bigg\|^2
\\
&
\quad
+\frac{3\Delta t\omega}{16}\bigg\|\frac{8(1-\omega)\rho_s \mathcal{P}^1(q'(\hat{C_h}))}{3\omega}D^{-1/2}\grad \hat{C_h}\bigg\|^2
-(1-\omega)\rho_s\Delta t\int_{\Gamma_{\text{in}}}   Q_h(g_h)(\bu\cdot \overrightarrow{n})ds
\\
&
\quad
+(\omega C_h^{n+1} +(1-\omega)\rho_sq(C_h^{n+1})- \omega C_h^{n}-(1-\omega)\rho_sq(C_h^{n}),(\omega{\hat{C_h}+(1-\omega)\rho_s\mathcal{P}(q(\hat{C_h}}))).
\end{align*}
Summation over $n=0$ to $n=N$
yields the conclusion.
\end{proof}


%
%
\subsection{Affine Isotherm}
 In the case of affine adsorption, i.e., $q(C)=K_1+K_2 C$ with $K_1,\ K_2\geq 0$, 
 we have that 
 $\frac{\partial q}{\partial t}=K_2 \frac{\partial C}{\partial t}$, 
 and let us denote
 $\bar{\omega}=(\omega+(1-\omega)\rho_s K_2)$. 
 The variational formulation 
 \eqref{variational1new}
 then 
 simplifies to
 :
\text{Find $C\in H^1(\Omega)$ such that ${C\Big{|}}_{\Gamma_{\text{in}}}=g$ and}
\begin{equation}\label{leq0vc}
   \bigg( \bar{\omega}\frac{\partial C}{\partial t},v \bigg) + ( \bu\cdot\grad C, v)+(D\grad C, \grad v)=( f,v),\text{ for all}\ \  v\in H_{0,\Gamma_{\text{in}}}^1(\Omega). 
\end{equation}
The semi-discrete in space Finite Element problem \eqref{variational2new} with affine adsorption is
:
Find $C_h\in X^h$ such that ${C_h  \Big{|}}_{\Gamma_{\text{in}}}=g_h$ and 
\begin{equation}
\label{leq1vc}
   \bigg( \bar{\omega}\frac{\partial C_h}{\partial t},v_h \bigg)
   + ( \bu\cdot\grad C_h, v_h)+(D\grad C_h, \grad v_h)=( f,v_h),\ \text{for all}\ \ v_h\in X_{0,\Gamma_{\text{in}}}^h(\Omega).
\end{equation}
For the fully discrete analysis, we 
use 
the refactorization of midpoint method \cite{burkardt2020refactorization} for time discretization
: 
Given $C^n_h\in X^h$, find $C^{n+1}_h\in X^h$ such that ${C_h^{n+1} \Big{|}}_{\Gamma_{\text{in}}}=g_h$ satisfying\\
Step 1: Backward Euler step at the half-integer time step $t_{n+1/2}$, for all $v_h\in X_{0,\Gamma_{\text{in}}}^h(\Omega)$,
\begin{align}\label{lmpc1}
\bigg( \bar{\omega}\frac{C^{n+1/2}_h-C^n_h}{\Delta t/2} , v_h \bigg)
+ (  u \cdot \grad{C^{n+1/2}_h},v_h)
+ (D\grad{C^{n+1/2}_h},\grad{v_h})
=
( f^{n+1/2},v_h).
\end{align}
Step 2: Forward Euler step at $t_{n+1}$, for all $v_h\in X_{0,\Gamma_{\text{in}}}^h(\Omega)$, 
\begin{align}\label{lmpc2}
\bigg( \bar{\omega}\frac{C^{n+1}_h-C^{n+1/2}_h}{\Delta t/2},v_h \bigg)
+ (  u \cdot \grad{C^{n+1/2}_h},v_h)
+
(D\grad{C^{n+1/2}_h},\grad{v_h})
=
( f^{n+1/2},v_h).
\end{align}
%
The following result 
gives an 
{\it a priori} error estimate for the case of affine adsorption 
$q(C) = K_1+K_2 C$
and semi-discrete in space approximations. 
\begin{theorem}\label{th3}
Assume that \ref{F1}-\ref{F6} are satisfied and the variational formulation with affine adsorption given by \eqref{leq0vc} has an exact solution $C\in H^1(0,T,H^{k+1}(\Omega))$ and $C_h$ solves the semi-discrete in space Finite Element formulation with affine adsorption given by \eqref{leq1vc}. Then for $1\leq r \leq k+1$ there exists a positive constant $K$ independent of $h$ such that:
\begin{align*}
\|C-C_h\|_{L^2(0,T;H^1( \Omega))}
\leq 
& 
\bigg(h^{r-1}\| C\|_{L^2(0,T;H^{r}(\Omega))}+h^{r-1}\Big\|\frac{\partial C}{\partial t}\Big\|_{L^2(0,T;H^r(\Omega))}+\|(C_h-{\hat{C_h}})(0)\|\bigg)
\\
& 
\times 
\bigg( 
\max\Big\{\bigg(2+\frac{8K_{\text{PF}}^2\|\bu\|_{\infty}^2+8\beta_1^2}{\lambda^2}\bigg) K_1^2,\frac{8K_{\text{PF}}^2\omega^2}{\lambda^2} K_1^2,\frac{4\omega}{\lambda}\Big\}
\bigg)^{1/2}
.
\end{align*}    
\end{theorem}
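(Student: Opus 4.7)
The plan is to run a standard Strang-type energy estimate for the semi-discrete Galerkin error of this linear parabolic equation with non-homogeneous inflow data. First, I would invoke \Cref{chath} to pick $\hat{C}_h\in X^h$ with $\hat{C}_h\big|_{\Gamma_{\text{in}}}=g_h$ and decompose the error as $C-C_h=\eta-\phi_h$, where $\eta:=C-\hat{C}_h$ and $\phi_h:=C_h-\hat{C}_h\in X_{0,\Gamma_{\text{in}}}^h$. Restricting the test function in \eqref{leq0vc} to $X_{0,\Gamma_{\text{in}}}^h$ and subtracting \eqref{leq1vc} would yield the error identity
\begin{equation*}
\bigg(\bar{\omega}\frac{\partial \phi_h}{\partial t},v_h\bigg)+(\bu\cdot\grad\phi_h,v_h)+(D\grad\phi_h,\grad v_h)=\bigg(\bar{\omega}\frac{\partial \eta}{\partial t},v_h\bigg)+(\bu\cdot\grad\eta,v_h)+(D\grad\eta,\grad v_h)
\end{equation*}
for all $v_h\in X_{0,\Gamma_{\text{in}}}^h$.

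Next, I would set $v_h=\phi_h$. The time derivative would give $\tfrac{\bar\omega}{2}\tfrac{d}{dt}\|\phi_h\|^2$; using $\div\bu=0$, the divergence theorem, and the boundary conditions exactly as in the derivation of \eqref{s22neweq}, the left-hand convective contribution would collapse to a nonnegative boundary integral over $\Gamma_{\text{out}}$ that I would discard, and the diffusion term would be bounded below by $\lambda\|\grad\phi_h\|^2$ via \ref{F3}. On the right I would apply Cauchy--Schwarz, \ref{F2}, \ref{F3}, and the Poincar\'e--Friedrichs inequality (\Cref{poin2}), and then Young's inequality with weights tuned so that a controlled fraction of $\lambda\|\grad\phi_h\|^2$ absorbs the $\|\grad\phi_h\|$ and (via $K_{\text{PF}}$) the $\|\phi_h\|$ factors arising from $(D\grad\eta,\grad\phi_h)$, $(\bu\cdot\grad\eta,\phi_h)$, and $(\bar\omega\,\partial_t\eta,\phi_h)$. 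The upshot would be
\begin{equation*}
\frac{\bar\omega}{2}\frac{d}{dt}\|\phi_h\|^2+\frac{\lambda}{2}\|\grad\phi_h\|^2\leq C_1\|\eta\|_1^2+C_2\Big\|\frac{\partial \eta}{\partial t}\Big\|^2,
\end{equation*}
with $C_1,C_2$ depending only on $\|\bu\|_\infty,\beta_1,\lambda,\omega,K_{\text{PF}}$.

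From here I would integrate on $[0,T]$, discard $\|\phi_h(T)\|^2\geq 0$, upgrade $\|\grad\phi_h\|$ to $\|\phi_h\|_1$ via \Cref{poin2}, take the infimum over admissible $\hat{C}_h$, and apply \Cref{interpolant} together with its time-derivative analogue (valid because the interpolant is linear and commutes with $\partial_t$, so $\partial_t\eta$ inherits the same $h^{r-1}$ bound in terms of $\|\partial_tC\|_r$). A triangle inequality $\|C-C_h\|_1\leq\|\eta\|_1+\|\phi_h\|_1$, one further use of \Cref{interpolant}, and the elementary $\sqrt{a^2+b^2+c^2}\leq a+b+c$ combined with pulling out the common factor $(\max\{\cdot,\cdot,\cdot\})^{1/2}$ should close the argument. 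The hard part will be purely bookkeeping: the three entries of the $\max$, namely $(2+(8K_{\text{PF}}^2\|\bu\|_\infty^2+8\beta_1^2)/\lambda^2)K_1^2$, $8K_{\text{PF}}^2\omega^2 K_1^2/\lambda^2$, and $4\omega/\lambda$, force a specific choice of $\varepsilon$-weights in Young's inequality so that exactly half of $\lambda\|\grad\phi_h\|^2$ is absorbed and the surviving constants match the $\max$ term by term; notably, no Gronwall step is needed, since after these absorptions the right-hand side carries no $\|\phi_h\|$ factor and the $\|\phi_h(0)\|$ contribution simply propagates with prefactor $4\omega/\lambda$.
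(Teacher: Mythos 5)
Your proposal is correct and delivers a bound of the claimed form, but it is worth noting how it sits relative to the paper. The paper's stated proof of \Cref{th3} is a one-line remark deferring to \Cref{thnlerr}, whose actual mechanism is different: there the equation is first integrated in time (formulations \eqref{timeintegv} and \eqref{timeintegfemsemi}), the test function is $C_h-\hat{C_h}$, and a Gronwall step produces a bound on $\omega\int_0^T\|C-C_h\|^2\,dt+\|\int_0^T D^{1/2}\grad (C-C_h)\,dt'\|^2$ carrying an exponential factor $\exp(T+\cdots)$ --- a controlled quantity and a constant structure that do not match the statement of \Cref{th3}. Your argument --- the classical splitting $C-C_h=\eta-\phi_h$, testing with $\phi_h$, discarding the nonnegative $\Gamma_{\text{out}}$ boundary term, absorbing everything into $\lambda\|\grad\phi_h\|^2$ via Poincar\'e--Friedrichs and Young, and integrating in time without Gronwall --- is instead the continuous-in-time analogue of the proof the paper actually writes out for the fully discrete affine case (\Cref{thm:th3time}), and it is the route that genuinely produces the three ingredients visible in the stated bound: the $\|\partial_t C\|_{L^2(0,T;H^r(\Omega))}$ term (from $(\bar{\omega}\,\partial_t\eta,\phi_h)$), the $\|(C_h-\hat{C_h})(0)\|$ term, and a $\max$ of purely algebraic, non-exponential constants. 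Your handling of $\partial_t\eta$ via linearity of the interpolant and commutation with $\partial_t$ is the standard fix and is sound. The only caveats are cosmetic: the Young weights must be tuned to reproduce the paper's particular $\max$ entries, and the statement's use of $\omega$ rather than $\bar{\omega}=\omega+(1-\omega)\rho_sK_2$ in two of those entries appears to be an inconsistency of the paper (compare \Cref{thm:th3time}), not something your argument needs to reproduce.
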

%
\begin{remark}
The argument is the same as in the nonlinear case, see Theorem \ref{thnlerr}.
\end{remark}
Next, we prove an energy type bound for \eqref{lmpc1}-\eqref{lmpc2},
the discrete version of the adsorption equation \eqref{Eq} for the 
affine isotherm, using the midpoint method for the time discretization. 
%
\begin{theorem}\label{th4}
Suppose the assumptions \ref{F1}-\ref{F7} 
hold, and
the fully discrete 
problem
\eqref{lmpc1}-\eqref{lmpc2}
has a smooth solution $\{C_h^n\}_{n=0}^N\in L^2(0,T;H^1(\Omega))$.
Then
\begin{align*}
&
\|C_h^{N}\|^2
+ \frac{2}{\bar{\omega}}\Delta t\sum_{n=0}^{N-1}\bigg(\int_{\Gamma_{\text{out}}}   ((C_h^{n+1/2})^2)(\bu\cdot \overrightarrow{n})ds\bigg)
+ \frac{\lambda}{\bar{\omega}}\Delta t \sum_{n=0}^{N-1}\|\grad{C_h^{n+1/2}}\|^2
\\
&
\leq 
\frac{4N\Delta t\|\bu\|_{\infty}^2+8\lambda\omega}{\bar{\omega}\lambda} \|{\hat{C_h}}\|^2
+ \frac{2N\Delta t}{\bar{\omega}}\bigg(\int_{\Gamma_{\text{in}}}   ((g_h)^2)(-\bu\cdot \overrightarrow{n})ds\bigg)
+ \frac{4N\Delta t \beta_1^2+N\Delta t \lambda^2}{\bar{\omega}\lambda}\|\grad {\hat{C_h}}\|^2
\\
&
\qquad
+ \frac{8K_{\text{PF}}^2}{\bar{\omega}\lambda}\Delta t\sum_{n=0}^{N-1}\|f^{n+1/2}\|^2
+ 3\|C_h^{0}\|^2.
\end{align*}
\end{theorem}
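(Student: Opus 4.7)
The plan is to exploit the identity that adding Step 1 and Step 2 of the refactored midpoint method recovers the standard implicit midpoint discretization,
\[
\bigg(\bar{\omega}\frac{C_h^{n+1}-C_h^n}{\Delta t},v_h\bigg) + (\bu\cdot\grad C_h^{n+1/2},v_h) + (D\grad C_h^{n+1/2},\grad v_h) = (f^{n+1/2},v_h),
\]
for all $v_h\in X_{0,\Gamma_{\text{in}}}^h$. With the discrete lift $\hat{C_h}$ from \Cref{chath} so that $\hat{C_h}|_{\Gamma_{\text{in}}}=g_h$, I would test with $v_h=C_h^{n+1/2}-\hat{C_h}\in X_{0,\Gamma_{\text{in}}}^h$.

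For the time-derivative term I would use the polarization identity, $(C_h^{n+1}-C_h^n,C_h^{n+1/2})=\tfrac{1}{2}(\|C_h^{n+1}\|^2-\|C_h^n\|^2)$, so that telescoping in $n$ produces $\|C_h^N\|^2-\|C_h^0\|^2$. For the convection term tested against $C_h^{n+1/2}$, an integration by parts combined with assumption \ref{F2} ($\div\bu=0$) and the boundary conditions ($\bu\cdot\vec{n}=0$ on $\Gamma_{\text{n}}$, $C_h^{n+1/2}=g_h$ on $\Gamma_{\text{in}}$) yields
\[
(\bu\cdot\grad C_h^{n+1/2},C_h^{n+1/2}) = \tfrac{1}{2}\int_{\Gamma_{\text{out}}}(C_h^{n+1/2})^2(\bu\cdot\vec{n})\,ds + \tfrac{1}{2}\int_{\Gamma_{\text{in}}}g_h^2(\bu\cdot\vec{n})\,ds,
\]
where the $\Gamma_{\text{in}}$ contribution is negative and moves to the right-hand side. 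The diffusion term against itself is controlled from below by $\lambda\|\grad C_h^{n+1/2}\|^2$ via the coercivity in \ref{F3}.

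The $\hat{C_h}$ contributions go to the right-hand side: I would bound $(\bu\cdot\grad C_h^{n+1/2},\hat{C_h})$ and $(D\grad C_h^{n+1/2},\grad\hat{C_h})$ by Cauchy--Schwarz and Young's inequality with carefully chosen weights so that the produced $\|\grad C_h^{n+1/2}\|^2$ pieces can be absorbed into $\tfrac{\lambda}{\bar\omega}\|\grad C_h^{n+1/2}\|^2$, leaving the prefactors $\tfrac{\|\bu\|_\infty^2}{\lambda}\|\hat{C_h}\|^2$ and $\tfrac{\beta_1^2}{\lambda}\|\grad\hat{C_h}\|^2$ visible in the final bound. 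The forcing term is handled by $(f^{n+1/2},C_h^{n+1/2}-\hat{C_h})\le \tfrac{K_{\mathrm{PF}}^2}{\lambda}\|f^{n+1/2}\|^2 + \tfrac{\lambda}{4K_{\mathrm{PF}}^2}\|\grad(C_h^{n+1/2}-\hat{C_h})\|^2$, using \Cref{poin2}. After multiplying by $\tfrac{2}{\bar\omega}$ and summing $n=0,\ldots,N-1$, the bounds \eqref{b1} on $\|\hat{C_h}\|,\|\grad\hat{C_h}\|$ together with $N\Delta t\le T$ give the stated constants on the $\hat{C_h}$ and $g_h$ contributions.

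The main obstacles are bookkeeping the Young's-inequality weights so that the coefficient of $\|\grad C_h^{n+1/2}\|^2$ on the left-hand side remains $\tfrac{\lambda}{\bar\omega}$, and accounting for the $3\|C_h^0\|^2$: this factor of $3$ arises naturally because, in addition to the $\|C_h^0\|^2$ produced by telescoping, one must absorb $\|C_h^0-\hat{C_h}\|^2\le 2\|C_h^0\|^2 + 2\|\hat{C_h}\|^2$ coming from the initial test-function expansion, which combined with the telescoped initial energy yields the coefficient $3$. Once these estimates are collected and the $\hat{C_h}$-norms replaced by $\|g\|_{L^2(\Gamma_{\text{in}})}$ bounds via \Cref{Eu}, the claim follows. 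The argument is otherwise parallel to \Cref{thm:th4new} but considerably shorter since the adsorption term collapses to a constant multiplier $\bar\omega$ and no polarization for $q(C_h^{n+1/2})$ is required.
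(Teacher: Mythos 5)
Your proposal follows essentially the same route as the paper's proof: apply the polarization identity to the two half-steps and sum (equivalently, recover the implicit midpoint form), test with $C_h^{n+1/2}-\hat{C_h}$, use the divergence/boundary identity for the convection term, coercivity for the diffusion term, and Young/Poincar\'e estimates for the lift and forcing terms, then telescope in $n$. The only minor bookkeeping discrepancies are that the final rescaling is by $4/\bar{\omega}$ rather than $2/\bar{\omega}$, and the coefficient $3$ on $\|C_h^{0}\|^2$ arises from Young's inequality applied to the telescoped coupling term $(\bar{\omega}(C_h^{N}-C_h^{0}),\hat{C_h})$ (which also supplies the $\tfrac{\bar{\omega}}{4}\|C_h^{N}\|^2$ absorbed on the left), not from an expansion of $\|C_h^{0}-\hat{C_h}\|^2$.
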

%
\begin{proof}
Let ${\hat{C_h}}\in X^h$ be such that ${{\hat{C_h}} \Big{|}}_{\Gamma_{\text{in}}}=g_h$ 
and take $v_h=C_h^{n+1/2}-{\hat{C_h}}\in  X_{0,\Gamma_{\text{in}}}^h(\Omega)$.
After plugging in $v_h$, we use the polarization identity to the first term of \eqref{lmpc1} and \eqref{lmpc2} and sum the resulting equations to
yield
\begin{align}
&
\frac{\bar{\omega}}{2}(\|C_h^{n+1}\|^2-\|C_h^{n}\|^2)+\Delta t ( \bu\cdot \grad{C_h^{n+1/2}},C_h^{n+1/2})+\Delta t( D\grad{C_h^{n+1/2}},\grad{C_h^{n+1/2}})
\notag
\\
&
= \Delta t ( f^{n+1/2},C_h^{n+1/2}-{\hat{C_h}})
+(\bar{\omega} ({C_h^{n+1} - C_h^{n}}),{\hat{C_h}})
+{\Delta t}( \bu\cdot \grad{C_h^{n+1/2}},{\hat{C_h}})
+ \Delta t( D\grad{C_h^{n+1/2}},\grad{{\hat{C_h}}}).
\notag
\end{align}
Using 
\begin{align}
( \bu\cdot \grad C_h^{n+1/2},C_h^{n+1/2})
= 
\frac{1}{2}\bigg(\int_{\Gamma_{\text{in}}}   ((g_h)^2)(\bu\cdot \overrightarrow{n})ds\bigg)
+  \frac{1}{2}\bigg(\int_{\Gamma_{\text{out}}}   ((C_h^{n+1/2})^2)(\bu\cdot \overrightarrow{n})ds\bigg)
,
\notag
\end{align}
and 
standard estimates 
we further obtain
\begin{align}
&
\frac{\bar{\omega}}{2}\|C_h^{n+1}\|^2-\frac{\bar{\omega}}{2}\|C_h^{n}\|^2+ \frac{\Delta t}{2}\bigg(\int_{\Gamma_{\text{out}}}   ((C_h^{n+1/2})^2)(\bu\cdot \overrightarrow{n})ds\bigg)+\frac{\Delta t\lambda}{4} \|\grad{C_h^{n+1/2}}\|^2
\notag
\\
&
\leq 
\frac{\|\bu\|_{\infty}^2\Delta t}{\lambda} \|{\hat{C_h}}\|^2-\frac{\Delta t}{2}\bigg(\int_{\Gamma_{\text{in}}}   ((g_h)^2)(\bu\cdot \overrightarrow{n})ds\bigg)
+ \frac{\Delta t \beta_1^2}{\lambda}\|\grad {\hat{C_h}}\|^2
\notag
\\
&
\quad
+ \frac{2K_{\text{PF}}^2\Delta t}{\lambda}\|f^{n+1/2}\|^2+\frac{\Delta t \lambda}{4}\|\grad {\hat{C_h}}\|^2
+(\bar{\omega} ({C_h^{n+1} - C_h^{n}}),{\hat{C_h}}),,
\notag
\end{align}
which by summation 
over $n=0$ to $n=N-1$ 
concludes the argument.
\end{proof}
\begin{remark}
We note that an intermediate result in the previous proof gives
\begin{align}
&
\frac{\bar{\omega}}{2}\|C_h^{n+1}\|^2-\frac{\bar{\omega}}{2}\|C_h^{n}\|^2+ \frac{\Delta t}{2}\bigg(\int_{\Gamma_{\text{out}}}   ((C_h^{n+1/2})^2)(\bu\cdot \overrightarrow{n})ds\bigg)+\Delta t( D\grad{C_h^{n+1/2}},\grad{C_h^{n+1/2}})
\notag
\\
&
=
-\frac{\Delta t}{2}\bigg(\int_{\Gamma_{\text{in}}}   ((g_h)^2)(\bu\cdot \overrightarrow{n})ds\bigg)+\Delta t ( f^{n+1/2},C_h^{n+1/2}-{\hat{C_h}})
\notag
\\
&
+(\bar{\omega} ({C_h^{n+1} - C_h^{n}}),{\hat{C_h}})
+{\Delta t}( \bu\cdot \grad{C_h^{n+1/2}},{\hat{C_h}})
+ \Delta t( D\grad{C_h^{n+1/2}},\grad{{\hat{C_h}}})
,
\notag
\end{align}
which when $f=0$ and $\hat{C_h}=0$ 
yields the following the mass balance type relation 
\begin{align}
&
\frac{\bar{\omega}}{2}(\|C_h^{n+1}\|^2-\|C_h^{n}\|^2)+ \frac{\Delta t}{2}\bigg(\int_{\Gamma_{\text{out}}}   ((C_h^{n+1/2})^2)(\bu\cdot \overrightarrow{n})ds\bigg)+\Delta t( D\grad{C_h^{n+1/2}},\grad{C_h^{n+1/2}})
\notag
\\
&
=
\frac{\Delta t}{2}\bigg(\int_{\Gamma_{\text{in}}}   ((g_h)^2)(-\bu\cdot \overrightarrow{n})ds\bigg)
,
\notag
\end{align}
where $\bu\cdot \overrightarrow{n}<0$ on $\Gamma_{\text{in}}$.
\end{remark}
Finally, we provide an {\it a priori} error estimate for the case of 
affine 
adsorption in the fully discrete case 
\eqref{lmpc1}-\eqref{lmpc2}.
%
\begin{theorem}\label{thm:th3time}
Suppose the assumptions \ref{F1}-\ref{F7} 
are satisfied, 
\eqref{lmpc1}-\eqref{lmpc2}
has a 
solution $\{C_h^n\}_{n=0}^N\in L^2(0,T;H^1(\Omega))$,
and 
\eqref{leq0vc} has an exact solution $C\in H^1(0,T,H^{k+1}(\Omega))$. 
Then for all $1\leq r \leq k+1$
:
\begin{align*}
&
\Delta t\sum_{n=0}^{N}\| C(t_{n+1/2})-C_h(t_{n+1/2})\|_1^2
\leq 
\max\Big\{\bigg(2+\frac{8K_{\text{PF}}^2\|\bu\|_{\infty}^2+8\beta_1^2}{\lambda^2}\bigg) K_2^2,\frac{8K_{\text{PF}}^2{\bar{\omega}}^2}{\lambda^2} K_2^2,\frac{
TK_{\text{PF}}^2}{ 3 
\lambda^2},\frac{2\bar{\omega}}{\lambda}\Big\}
\\
& 
\quad
\times
\bigg(h^{2r-2}\Delta t\sum_{n=0}^{N}\| C(t_{n+1/2})\|_r^2+h^{2r-2}\Big\|\frac{\partial C}{\partial t}\Big\|_{L^2(0,T;H^r(\Omega))}^2
+ (\Delta t)^4\|C_{ttt}\|_{L^{\infty}(0,T;L^{\infty})}^2
+ \|
\hat{C_h}-C_h^0\|^2\bigg).
\end{align*}
\end{theorem}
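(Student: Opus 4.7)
The plan is to adapt the semi-discrete error analysis of Theorem~\ref{th3} to the fully discrete setting by decomposing the error at each half time-step and exploiting the second-order consistency of the refactorized midpoint time-stepping. First, I would add \eqref{lmpc1} and \eqref{lmpc2}: the intermediate unknown $C_h^{n+1/2}$ cancels in the temporal difference, producing the equivalent one-step midpoint form
\begin{equation*}
\bigg(\bar{\omega}\frac{C_h^{n+1}-C_h^n}{\Delta t},v_h\bigg) + (\bu\cdot\grad C_h^{n+1/2},v_h) + (D\grad C_h^{n+1/2},\grad v_h) = ( f^{n+1/2},v_h),
\end{equation*}
for all $v_h\in X_{0,\Gamma_{\text{in}}}^h$. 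Evaluating \eqref{leq0vc} at $t_{n+1/2}$ against the same test space, introducing an interpolant $\hat{C_h}(t)\in X^h$ of $C(t)$ with $\hat{C_h}\big|_{\Gamma_{\text{in}}}=g_h$, and subtracting yields an error equation for $\phi_h^n:=\hat{C_h}(t_n)-C_h^n\in X_{0,\Gamma_{\text{in}}}^h$ and $\eta^n:=C(t_n)-\hat{C_h}(t_n)$.

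Next I would test the error equation with $v_h=\tfrac{1}{2}(\phi_h^{n+1}+\phi_h^{n})$, use the polarization identity to convert the time-derivative term into the telescoping quantity $\tfrac{\bar{\omega}}{2}(\|\phi_h^{n+1}\|^2-\|\phi_h^n\|^2)$, and bound the diffusive term from below by $\lambda\|\grad\phi_h^{n+1/2}\|^2$. The $\eta$-cross-terms and the source-like residual are handled by Cauchy--Schwarz, Young, and Poincar\'e--Friedrichs inequalities as in the proof of Theorem~\ref{th3}. Summing from $n=0$ to $N-1$ and absorbing $\varepsilon$-terms into the dissipation gives a bound of the form
\begin{equation*}
\|\phi_h^N\|^2 + \tfrac{\lambda}{\bar{\omega}}\Delta t\!\sum_{n=0}^{N-1}\|\grad\phi_h^{n+1/2}\|^2 \leq K\Big(\|\phi_h^0\|^2 + \Delta t\!\sum_{n=0}^{N-1}\|\eta^{n+1/2}\|_1^2 + \mathcal{T}_{\Delta t}\Big),
\end{equation*}
where $\mathcal{T}_{\Delta t}$ collects the temporal consistency residuals, at which point a discrete Gronwall step on $\|\phi_h^n\|^2$ completes the energy estimate.

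The step I expect to be the main obstacle is the estimate of $\mathcal{T}_{\Delta t}$. Two defects must be controlled: the centred-difference defect $\delta_t^n := \frac{C(t_{n+1})-C(t_n)}{\Delta t}-C_t(t_{n+1/2})$ arising in the time-derivative term, and the midpoint defect $\mu^n := \tfrac{1}{2}(C(t_n)+C(t_{n+1}))-C(t_{n+1/2})$ that appears after rewriting the spatial operators evaluated at $t_{n+1/2}$ in terms of the average. A Taylor expansion with integral (Peano kernel) remainder yields the pointwise bounds $|\delta_t^n|,|\mu^n|\lesssim(\Delta t)^2\|C_{ttt}\|_{L^\infty(t_n,t_{n+1};L^\infty)}$, where the $\mu^n$-defect's intrinsic $C_{tt}$ scaling is upgraded to $C_{ttt}$ through one additional integration by parts in the Peano representation. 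Squaring, multiplying by $\Delta t$, and summing then delivers the $(\Delta t)^4\|C_{ttt}\|_{L^\infty(0,T;L^\infty)}^2$ contribution on the right-hand side of the claimed bound; the care required to make both defects collapse into a single $C_{ttt}$ norm rather than a mixture of lower regularity norms is the chief technical difficulty.

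Finally, I would select $\hat{C_h}(t_n)=\Pi_h C(t_n)$ as in Lemma~\ref{interpolant} and apply \eqref{inq4} to obtain $\|\eta^{n+1/2}\|_1\lesssim h^{r-1}\|C(t_{n+1/2})\|_r$; the additional term generated by $\delta_t^n$ acting on $\eta$ is handled by the standard estimate $\Delta t\sum_n\|\partial_t\eta\|^2\lesssim h^{2r-2}\|\partial_t C\|_{L^2(0,T;H^r)}^2$. The triangle inequality $\|C(t_{n+1/2})-C_h(t_{n+1/2})\|_1\leq \|\eta^{n+1/2}\|_1+\|\phi_h^{n+1/2}\|_1$, squared and summed with the discrete weights $\Delta t$, converts the $\phi_h^n$ energy bound into the stated discrete $L^2(0,T;H^1)$ error estimate, with the initial datum $\|\phi_h^0\|=\|\hat{C_h}-C_h^0\|$ entering directly from the Gronwall step and the max prefactor matching the one from Theorem~\ref{th3} augmented by the temporal constants $T K_{\text{PF}}^2/(3\lambda^2)$ and $2\bar{\omega}/\lambda$.
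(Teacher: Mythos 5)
Your overall architecture is the same as the paper's: add \eqref{lmpc1} and \eqref{lmpc2} to recover the one-step midpoint form, write the exact equation at $t_{n+1/2}$ with a consistency residual, split the error into $\phi_h^n$ and $\eta^n$, test with the averaged $\phi_h^{n+1/2}$, use polarization to telescope, absorb via Young and Poincar\'e--Friedrichs, sum, and finish with Lemma~\ref{interpolant}. Two points, one of which is a genuine gap.

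The gap is your treatment of the midpoint defect $\mu^n := \tfrac{1}{2}(C(t_n)+C(t_{n+1}))-C(t_{n+1/2})$. Its Taylor expansion is $\mu^n=\tfrac{(\Delta t)^2}{8}C_{tt}(t_{n+1/2})+O((\Delta t)^4)$; the leading term is genuinely proportional to $C_{tt}$, and no integration by parts in the Peano kernel can convert that coefficient into one controlled by $C_{ttt}$ alone --- the $C_{tt}$ contribution does not vanish and cannot be ``upgraded.'' So your route produces an additional $(\Delta t)^4\|C_{tt}\|^2$-type term that is absent from the stated bound, and the claim that both defects collapse into the single $\|C_{ttt}\|_{L^\infty(0,T;L^\infty)}$ norm is false as written. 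The paper sidesteps this entirely: the exact solution's advective and diffusive terms are kept at $C(t_{n+1/2})$ (never rewritten as the average of endpoint values), the error is measured at $t_{n+1/2}$, and the only temporal defect is the difference-quotient residual $r^n$, which is indeed $O((\Delta t)^2\,C_{ttt})$. If you adopt that convention, $\mu^n$ never enters and your argument goes through. Separately, your discrete Gronwall step is unnecessary and would be counterproductive: the $\phi_h$-terms on the right are all absorbed into the dissipation, which is why the stated constant is a plain maximum with no $\exp(cT)$ factor; invoking Gronwall would produce an exponential prefactor that does not match the theorem.
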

%
\begin{proof}
\textcolor{blue}{
Let the approximate solution at time $t^{n+1/2}$ be $C_h^{n+1/2}$. Then by using the midpoint method, we get, the fully discrete variational formulation as follows:\\
Given $C^n_h\in X^h$, find $C^{n+1}_h\in X^h$ such that ${C^{n+1}_h  \Big{|}}_{\Gamma_{\text{in}}}=g_h$ and satisfying,
\begin{equation}\label{mp1time}
\begin{aligned}
\bigg(\bar{\omega}\frac{C^{n+1}_h-C^n_h}{\Delta t},v_h\bigg)+(  u \cdot \grad{C^{n+1/2}_h},v_h)+&(D\grad{C^{n+1/2}_h},\grad{v_h})=( f^{n+1/2},v_h), \ \forall v_h\in X_{0,\Gamma_{\text{in}}}^h(\Omega). 
\end{aligned}
\end{equation}
Let $C_t$ represent $\frac{\partial C}{\partial t}$. We write the following variational formulation for the exact solution $C(t)$.
\begin{equation}\label{eq0vctimemidf}
\begin{aligned}
  & \bigg(\bar{\omega}\frac{C(t_{n+1})- C(t_{n})}{\Delta t},v\bigg)+ ( \bu\cdot\grad C(t_{n+1/2}), v)+(D\grad C(t_{n+1/2}), \grad v)
   \\&=(f^{n+1/2},v)+(r^n,v),\  \forall \ v\in H_{0,\Gamma_{\text{in}}}^1(\Omega).
   \end{aligned}
\end{equation}
where time discretization error, $r^n=\frac{C(t_{n+1})-C(t_{n})}{\Delta t}-\frac{C_t(t_{n+1}
)+ C_t(t_{n}
)}{2}.$}\\
Let $e^n=C(t_n)-C_h^n$ and $v=v_h\in X_{0,\Gamma_{\text{in}}}^h\subset H_{0,\Gamma_{\text{in}}}^1(\Omega)$ in \eqref{eq0vctimemidf} and then subtract \eqref{mp1time} from \eqref{eq0vctimemidf} to get 
\begin{align}
\bigg( \bar{\omega}\frac{e^{n+1}- e^{n}}{\Delta t},v_h \bigg)
+ ( \bu\cdot\grad e^{n+1/2}, v_h)+(D\grad e^{n+1/2}, \grad v_h)=(r^n,v_h),
   \ \  \forall v_h\in X_{0,\Gamma_{\text{in}}}^h(\Omega).
\notag
\end{align}
Consider 
${\hat{C_h}}\in X^h$ such that ${{\hat{C_h}} \Big{|}}_{\Gamma_{\text{in}}}=g_h$. 
Then $e^n = C(t_n)-C_h^n 
= \phi_h^n + \eta^n$, where 
$\phi_h^n={\hat{C_h}}-C_h^n$ and $\eta^n={\hat{C_h}}-C(t_n)$. 
Choosing $v_h=\phi_h^{n+1/2}\in X_{0,\Gamma_{\text{in}}}^h$, 
the above error equation becomes
\begin{align}
\label{ceqerr1time}
&
\bigg( \bar{\omega}\frac{\phi_h^{n+1}- \phi_h^{n}}{\Delta t},\phi_h^{n+1/2} \bigg)
+ ( \bu\cdot\grad \phi_h^{n+1/2}, \phi_h^{n+1/2})+(D\grad \phi_h^{n+1/2}, \grad \phi_h^{n+1/2})\notag 
\notag
\\
&
=
\bigg( \bar{\omega}\frac{\eta^{n+1}- \eta^{n}}{\Delta t} , \phi_h^{n+1/2} \bigg)
+ ( \bu\cdot\grad \eta^{n+1/2}, \phi_h^{n+1/2})+(D\grad \eta^{n+1/2}, \grad \phi_h^{n+1/2})+(r^n,\phi_h^{n+1/2}).
\notag
\end{align}
With a technique similar to one used in Theorem \ref{th3} 
and a Taylor expansion 
for the residual $r^n$
we obtain
\begin{align*}
\|\phi_h^{N}\|^2+\frac{\lambda}{\bar{\omega}}\sum_{n=0}^{N}\Delta t\|\grad \phi_h^{n+1/2}\|^2 
\leq 
& 
\bigg(\frac{4K_{\text{PF}}^2\|\bu\|_{\infty}^2
+ 4\beta_1^2}{\bar{\omega}\lambda}\bigg)\sum_{n=0}^{N}\Delta t\|\eta^{n+1/2}\|_1^2 
+ \frac{4K_{\text{PF}}^2\bar{\omega}}{\lambda}\int_{0}^{T}\|\eta_t\|^2 dt
\\
&
\quad 
+ 
\frac{TK_{\text{PF}}^2}{ 6 \bar{\omega}\lambda}
(\Delta t^2\|C_{ttt}\|_{L^{\infty}(0,T;L^{\infty})})^2+\|\phi_h^{0}\|^2.
\end{align*}
Since by the triangle inequality we have
\begin{align*}
&\sum_{n=0}^{N}\Delta t\| e^{n+1/2}\|_1^2
\leq 
\sum_{n=0}^{N} 2\Delta t \bigg(\| \phi_h^{n+1/2}\|_1^2+\|\eta^{n+1/2}\|_1^2\bigg)
,
\end{align*}
the errors satisfy
\begin{align*}
& 
\sum_{n=0}^{N}\Delta t\| e^{n+1/2}\|_1^2
\leq 
\bigg(2+\frac{8K_{\text{PF}}^2\|\bu\|_{\infty}^2+8\beta_1^2}{\lambda^2}\bigg) \Delta t \sum_{n=0}^{N} \inf_{{\hat{C_h}}\in X^h\atop {\hat{C_h}}|_{\Gamma_{\text{in}}} = g_h}\| C^{n+1/2}-{\hat{C_h}}\|_1^2 
\\
&
+ \frac{8 K_{\text{PF}}^2{\bar{\omega}}^2}{\lambda^2}\int_{0}^{T}
\inf_{{\hat{C_h}}\in X^h\atop {\hat{C_h}}|_{\Gamma_{\text{in}}} = g_h}
    \Big\|\frac{\partial (C-{\hat{C_h}})}{\partial t}\Big\|^2 dt
+ \frac{
TK_{\text{PF}}^2}{ 3 \lambda^2}(\Delta t^2\|C_{ttt}\|_{L^{\infty}(0,T;L^{\infty})})^2+\frac{2\bar{\omega}}{\lambda}\|\phi_h^{0}\|^2.
\end{align*}
Choosing $g_h$ 
the interpolant of $g$ in $X_{\Gamma_{\text{in}}}^h$, 
Lemma \ref{interpolant} 
concludes the argument.
\end{proof}
\section{Numerical Test}\label{sec:5}
In this section, we perform numerical tests to show that the midpoint method described in \Cref{subsec:32} gives a second-order convergence rate for the considered PDE model for the constant, affine, and nonlinear, explicit adsorptions. Since the results are similar, we only show the nonlinear, explicit adsorption case. In the following two subsections, we first check the convergence rate for the case of nonlinear, explicit isotherm in the first test, and in the second test, we plot the concentration profile. We also show the comparison of total mass after each test. 
\subsection{Convergence Results} 
For checking the order of convergence, we assume the following: $\bu=(1,1)$, $D=I$, $\Omega=[0,1]\times [0,1]$, $\omega=0.5$, $X^h$ = the space of continuous piecewise affine functions, the exact solution is $C(x,y,t)=t^2(x^3-\frac{3}{2}x^2+1)\cos{(\frac{\pi}{4} y)}$. The true solution determines the body force $f$, initial condition $C_0$, and boundary conditions.
The norms used in the table are defined as follows,
$$\| C\|_{\infty,0}:= \text{ess} \sup_{0<t<T} \| C(\cdot,t)\|_{L^2(\Og)}\ 
\text{and}
\ \| C\|_{0,0}
:=
\bigg(\int_0^T\| C(\cdot, t)\|_{L^2(\Og)}^2\ dt\bigg)^{1/2}.$$In this test problem, we use Langmuir's isotherm with $q_{max}=K_{eq}=1$ where $q(C)=\frac{q_{max}K_{eq}C}{1+K_{eq}C}=\frac{C}{1+C}$. We simplify the problem formulation to a single (nonlinear) transport equation in one unknown $C$ using
$$\frac{\partial q}{\partial t}=\frac{\partial q}{\partial C}\frac{\partial C}{\partial t}=\frac{1}{(1+C)^2}\frac{\partial C}{\partial t}.$$
While using Backward Euler discretization, we compute solutions by lagging the nonlinearity $q'(C_h^{n+1})$ as \cite{lagg}
$$q'(C_h^{n+1})\frac{C^{n+1}_h-C^{n}_h}{\Delta t}\approx q'(C_h^{n})\frac{C^{n+1}_h-C^{n}_h}{\Delta t}. $$
For the midpoint method, we use the standard (second order) linear extrapolation\cite{layton2020doubly} of $C^{n+1/2}_h$ while computing $q'(C^{n+1/2}_h)$ as
$$q'(C_h^{n+1/2})\frac{C^{n+1}_h-C^{n}_h}{\Delta t}\approx q'\bigg(\frac{3C_h^{n}-{C_h^{n-1}}}{2}\bigg)\frac{C^{n+1}_h-C^{n}_h}{\Delta t}. $$
\Cref{tab:first} reports BE temporal errors at fixed $h=1/128$ and the experimental order of convergence is $1$, which matches the first order precision of the scheme.

\begin{table}[ht]
\centering
\begin{tabular}{|l|l|l|l|l|l|}
\hline
$(h,\Delta t)\ \rightarrow$ & $(\sfrac{1}{128},\sfrac{1}{2})$ & $(\sfrac{1}{128},\sfrac{1}{4})$ & $(\sfrac{1}{128},\sfrac{1}{8})$ & $(\sfrac{1}{128},\sfrac{1}{16})$ & $(\sfrac{1}{128},\sfrac{1}{32})$\\ \hline
$\|C-C_h\|_{\infty,0} $       &0.0636074 &0.0374917 &0.0206665 &0.0108985 &0.00558454    \\ \hline
Rate                        &- &0.76262   &0.85928 & 0.92316  &0.96462  \\ \hline
$\|C-C_h\|_{0,0}$             &0.0522838 &0.0310798 &0.0169125 &0.00883222 &0.00451535       \\ \hline
Rate                        & -                             & 0.75039   &0.87789  &0.93724 & 0.96794                  \\ \hline
$\|\grad C-\grad C_h\|_{0,0}$ & 0.0847469 &0.0502647 &0.0273473 &0.0143409 &0.00746467  \\ \hline
Rate                        & -                            &0.75362 &0.87815 &0.93126   &0.94199\\ \hline
$\|C-C_h\|_{0,1}  $           &0.0995773 &0.0590973 &0.0321544 &0.0168424 &0.00872408     \\ \hline
Rate                        & -                             & 0.75272 &0.87808  &0.93292 &0.94902                            \\ \hline
\end{tabular}
\caption{Temporal convergence rates for the BE approximation with a Langmuir adsorption model to the non-steady-state problem.}
\label{tab:first}
\end{table}
\Cref{tab:second} shows midpoint temporal errors whose experimental order is 2 in all norms, consistent with the scheme’s second-order accuracy in the fully discrete analysis \Cref{thm:th3time}.
\begin{table}[ht]
\centering
\begin{tabular}{|l|l|l|l|l|l|}
\hline
$(h,\Delta t)\ \rightarrow$ & $(\sfrac{1}{128},\sfrac{1}{2})$ & $(\sfrac{1}{128},\sfrac{1}{4})$ & $(\sfrac{1}{128},\sfrac{1}{8})$ & $(\sfrac{1}{128},\sfrac{1}{16})$ & $(\sfrac{1}{128},\sfrac{1}{32})$\\ \hline
$\|C-C_h\|_{\infty,0}$        &0.0357416 &0.00951864 &0.00242801 &0.000611192 &0.000153313      \\ \hline
Rate                        &- &1.9088 &1.971 &1.9901  &1.9951    \\ \hline
$\|C-C_h\|_{0,0} $            &0.0307399 &0.00741601 &0.00181065 &0.00044712 &0.000111214           \\ \hline
Rate                        & -                             & 2.0514  &2.0341 &2.0178 &2.0073                  \\ \hline
$\|\grad C-\grad C_h\|_{0,0}$ & 0.744766 &0.191186 &0.0475431 &0.0117471 &0.00323681      \\ \hline
Rate                        & -                             &  1.9618  &2.0077  &2.0169 &1.8597                 \\ \hline
$\|C-C_h\|_{0,1} $            & 0.7454 &0.19133 &0.0475776 &0.0117556 &0.00323872            \\ \hline
Rate                        & -                             & 1.962 &2.0077  &2.0169 &1.8599                        \\ \hline
\end{tabular}
\caption{Temporal convergence rates for the midpoint approximation with a Langmuir adsorption model to the non-steady-state problem.}
\label{tab:second}
\end{table}
\Cref{fig:ratelangmuirq} compares BE and midpoint at $h=1/128$, the lines have log–log slopes $1$ and $2$, respectively, as predicted by our analysis.
\begin{figure}[H]
\centering
\includegraphics[width=1\columnwidth]{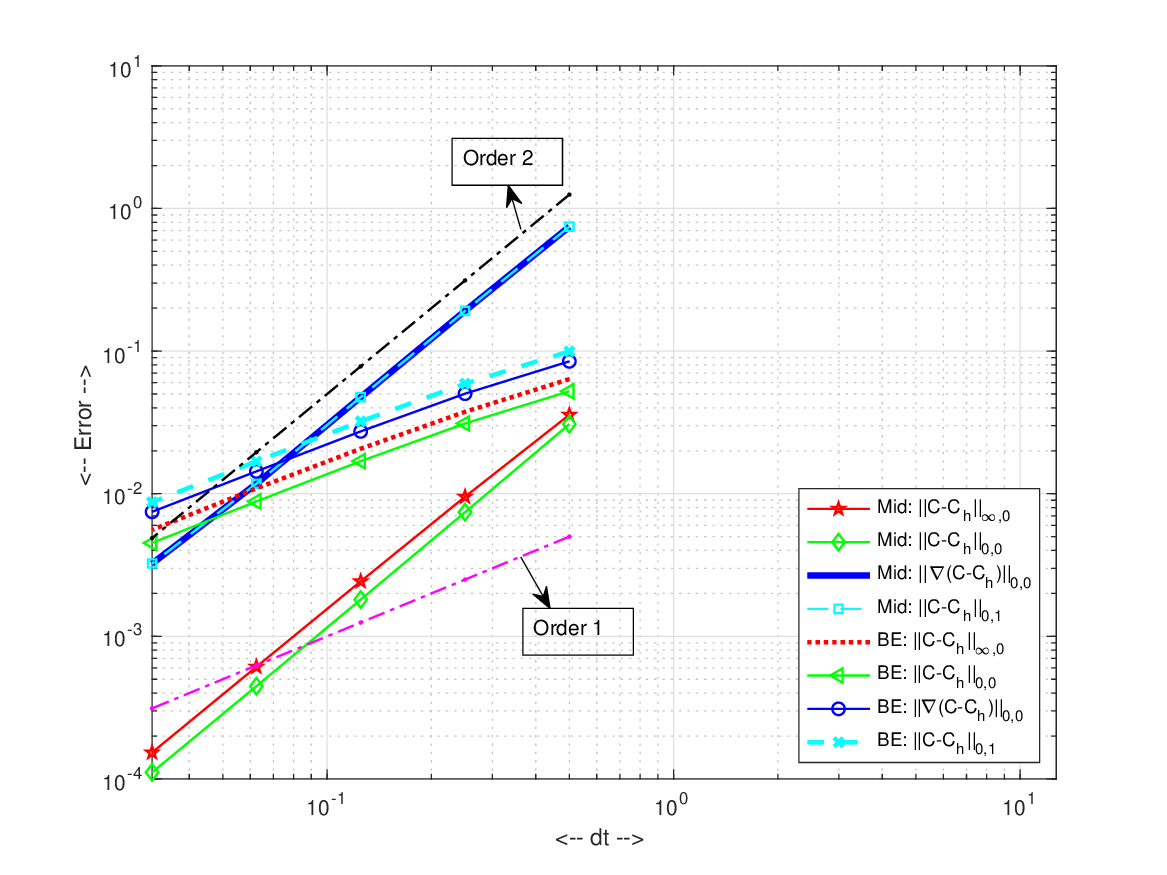}
\caption{Langmuir Isotherm: Temporal rate of convergence of BE and Midpoint, $T=1.0$, $h=1/128$. Notice that Midpoint is giving order 2 whereas BE is giving order 1.}
\label{fig:ratelangmuirq}
\end{figure}
\Cref{fig:totalmasslangmuirq} plots total-mass versus time. The midpoint method nearly preserves mass, whereas BE shows an upward deviation, which agrees with our mass-balance relation in \Cref{thm:neth2neweq} and the discrete stability bound for the midpoint scheme in \Cref{thm:th4new}.
\begin{figure}[H]
\centering
\includegraphics[width=1\columnwidth]{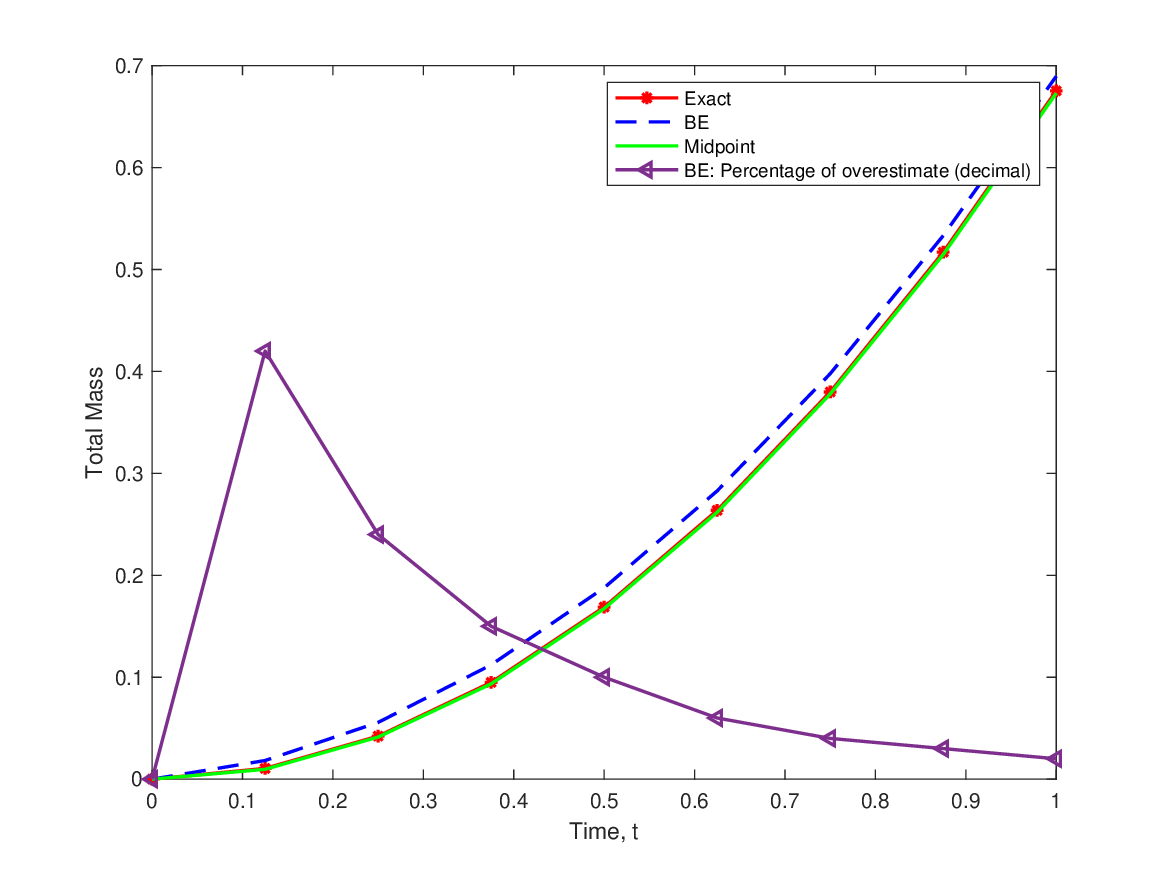}
\caption{Langmuir Isotherm: Comparison of total mass for exact solution, BE, Midpoint, $T=1.0$, $h=1/128$, $dt=1/8$. Notice that BE overestimates total mass rather than underestimates.}
\label{fig:totalmasslangmuirq}
\end{figure}
\subsection{Concentration Profiles}
For the plot of the concentration profile in each case, we consider the following:
$f=0$, $g=1$, $T=3.0$, $h=1/128$, $dt=1/128$, $\bu=(0,2x(x-2))$, $D=I$, $\Omega=[0,2]\times [0,10]$, $\omega=0.5$, $X^h = \text{the space of continuous piecewise affine functions}.$
\begin{figure}[H]
\centering
\hspace{-12cm}\subfloat{\includegraphics[width=1\linewidth]{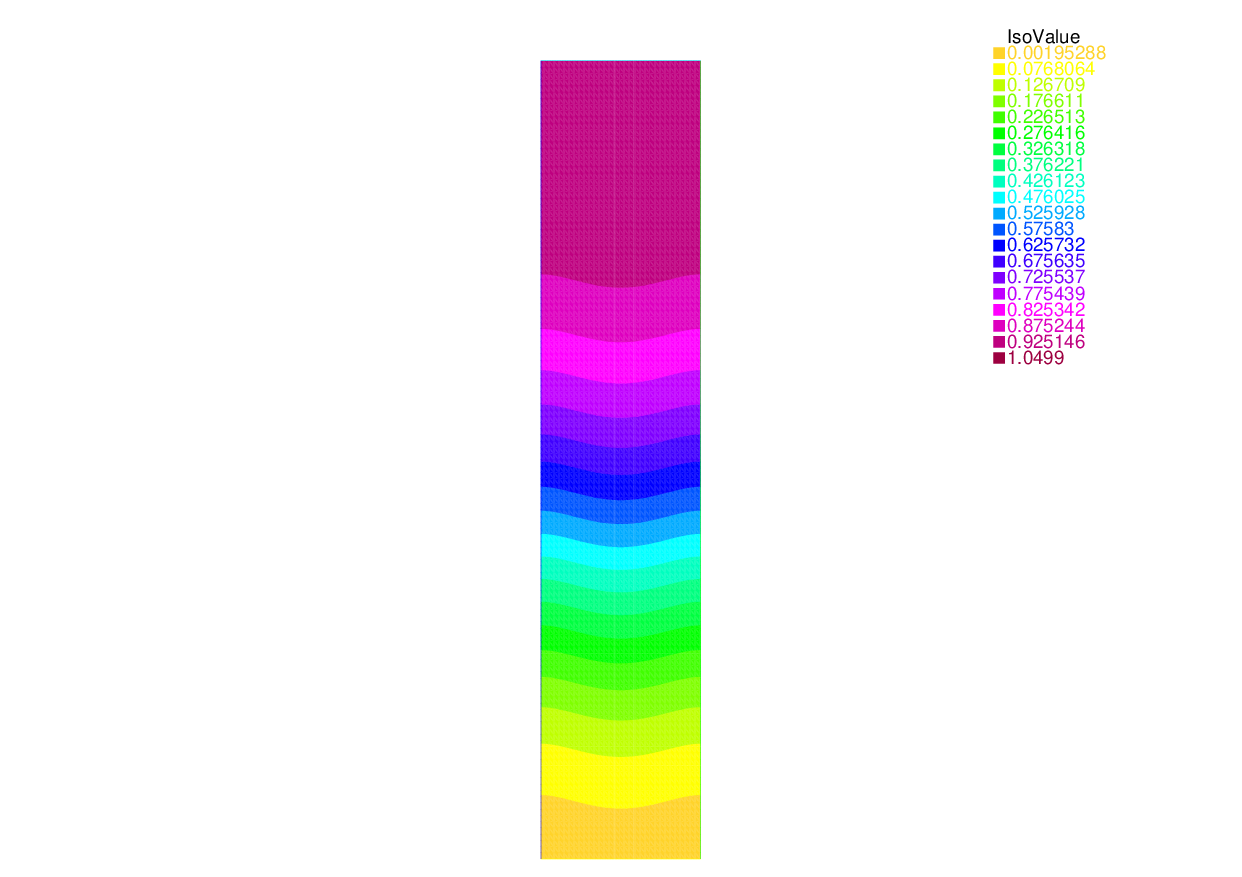}}  \hspace{-6cm}\subfloat{\includegraphics[width=1\linewidth]{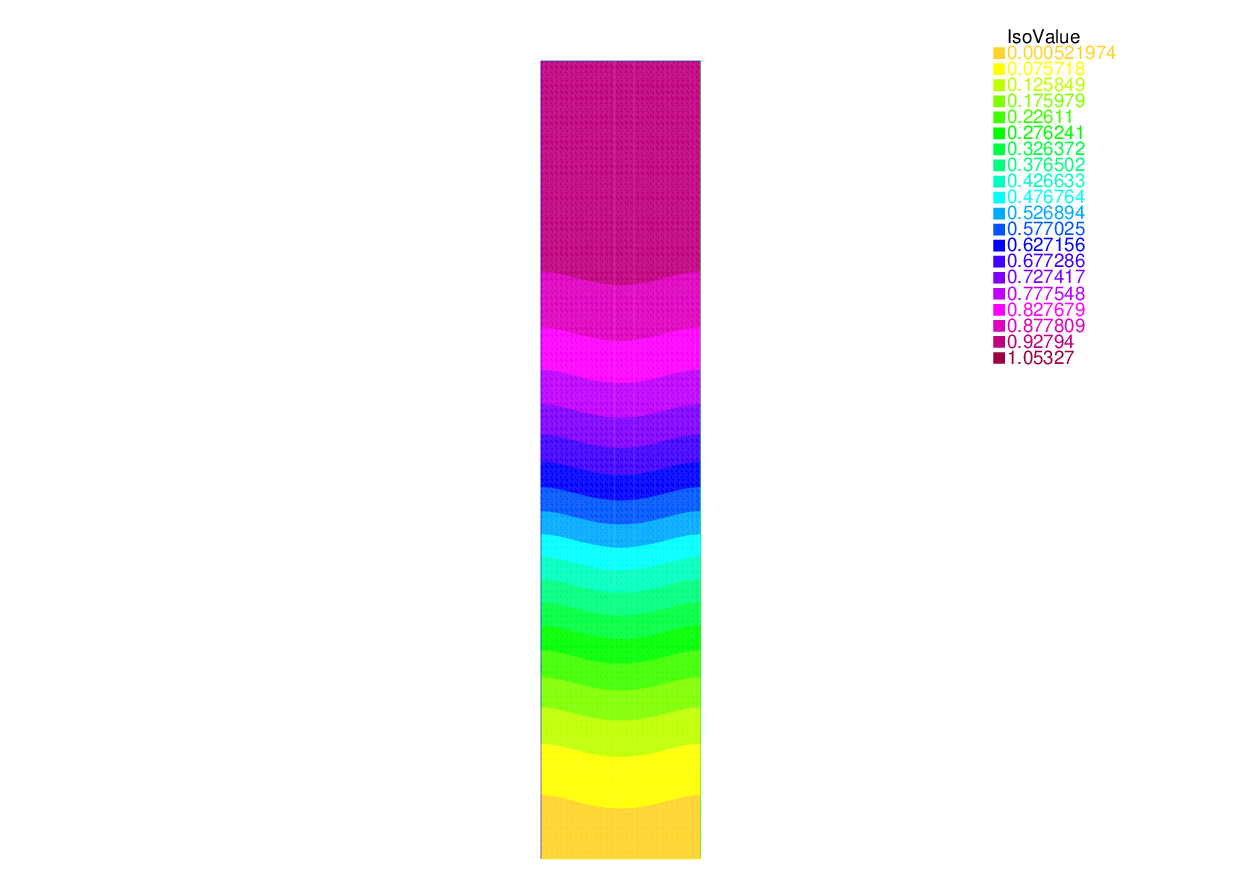}}
\hspace{-8cm}
\caption{Langmuir isotherm: Plot of concentration while using BE (Left) $\&$ Midpoint (Right), $\Omega=[0,2]\times [0,10]$, $f=0$, $g=1$, $T=3.0$, $h=1/128$, $dt=1/128$, $\bu=(0,2x(x-2))$, $D=I$. }
\label{fig:cplotlangmuir}
\end{figure}
\begin{figure}[H]
\centering
\includegraphics[width=1\columnwidth]{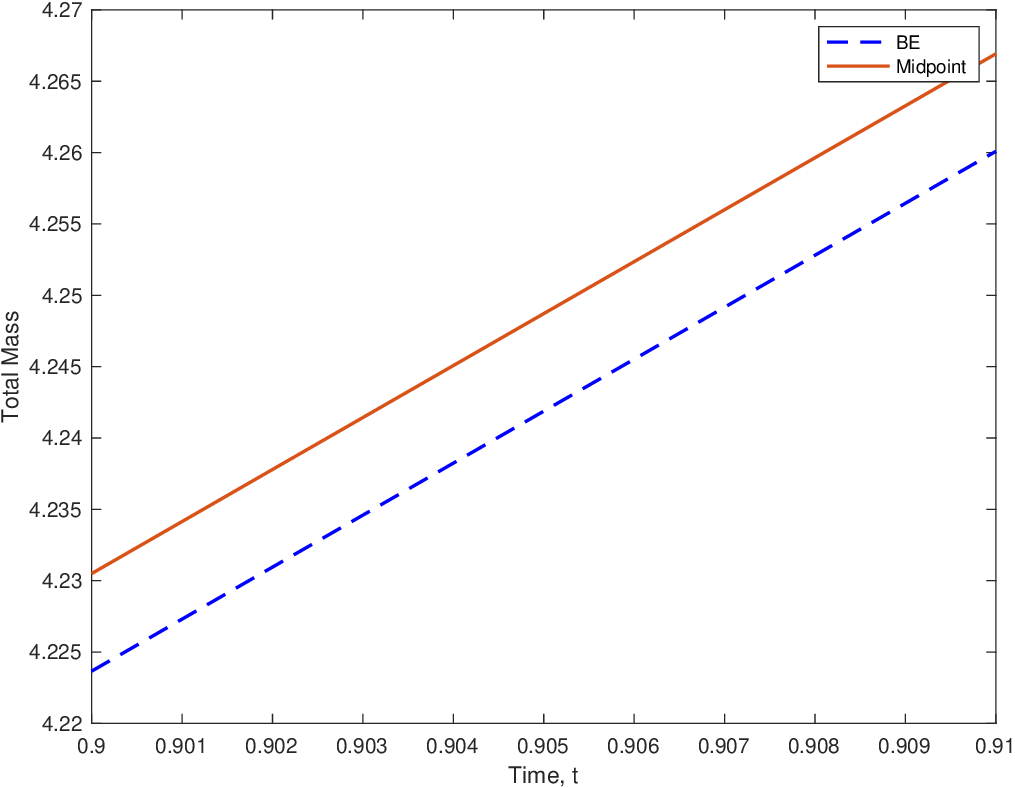}
\caption{Langmuir isotherm: Comparison of total mass, $\Omega=[0,2]\times [0,10]$, $f=0$, $g=1$, $T=3.0$, $h=1/128$, $dt=1/128$, $\bu=(0,2x(x-2))$, $D=I$.}
\label{totalclangmuir}
\end{figure}
In the 
\Cref{fig:cplotlangmuir}, 
the concentration front gradually advances through the height of the membrane over time as it evolves following the contour of the velocity profile.  Although we cannot visibly see the difference among two plots for BE and midpoint in \Cref{fig:cplotlangmuir}, we can see the significant difference in total mass evolution in \Cref{totalclangmuir}.
\section{Conclusion} 
We provided a detailed stability and error analysis of a simulation tool for modeling the adsorption process for the constant and affine adsorption cases. For the nonlinear, explicit adsorption, we proved stability analysis for the continuous case and semi-discrete case and the existence of a solution for the fully discrete case. The error analysis for this case is more involved and under some assumptions, we were able to show an error estimate for the semi-discrete case. But numerically, we showed that the midpoint method gives second-order convergence for all adsorption cases. The next most important step in developing this tool is coupling this reactive transport problem with porous media flow where velocity is approximated.

\bmhead{Acknowledgements}
We thank Professor William J. Layton, for his insightful idea and guidance throughout the research.
\section*{Declarations}
\begin{itemize}
\item Funding: Farjana Siddiqua 
is partially supported by the National Science Foundation under grant DMS-2110379. Catalin Trenchea is partially supported by the National Science Foundation under grant 2208220.
\item Conflict of interest: The authors declare no competing interests.
\item Ethics approval and consent to participate: Not Applicable.
\item Consent for publication: Not Applicable.
\item Data availability: Data sharing is not applicable to this article as no data sets were generated or analyzed during the current study.
\item Materials availability: Upon request.
\item Code availability: Upon request.
\item Author contribution: The authors contributed equally.
\end{itemize}
\bibliography{snref.bib}

\end{document}